\providecommand{\U}[1]{\protect\rule{.1in}{.1in}}
\newtheorem{theorem}{Theorem}[section]
\newtheorem{proposition}[theorem]{Proposition}
\newtheorem{corollary}[theorem]{Corollary}
\newtheorem{example}[theorem]{Example}
\newtheorem{remark}[theorem]{Remark}
\newtheorem{lemma}[theorem]{Lemma}
\newtheorem{final remark}[theorem]{Final Remark}
\newtheorem{definition}[theorem]{Definition}
\begin{document}

\title{\sc A general abstract approach to approximation properties in Banach spaces}
\date{}
\author{ Sonia Berrios\thanks{Supported by Fapemig APQ-04687-10.} ~and Geraldo Botelho\thanks{Supported by CNPq Grant
302177/2011-6 and Fapemig Grant PPM-00326-13.\hfill\newline2010 Mathematics Subject
Classification: 46B28, 46A32, 47L20, 47B10, 46G25, 47L22. \newline Keywords: approximation property, operator ideal, Banach space, projective tensor product.} }
\maketitle

\begin{abstract} We propose a unifying approach to many approximation properties studied in the literature from the 1930s up to our days. To do so, we say that a Banach space $E$ has the $({\cal I}, {\cal J}, \tau)$-approximation property if $E$-valued operators belonging to the operator ideal $\cal I$ can be approximated, with respect to the topology $\tau$, by operators belonging to the operator ideal $\cal J$. Restricting $\tau$ to a class of linear topologies, which we call {\it ideal topologies}, this concept recovers many classical/recent approximation properties as particular instances and several important known results are particular cases of more general results that are valid in this abstract framework.
\end{abstract}

\section{Introduction and background}
Aware of the fact that norm limits of finite rank bounded operators in Banach spaces are compact, T. W. Hildebrandt \cite{hild} in 1931 asked if the converse is true. According to A. Pietsch \cite[p.\,54]{pietschhistoria}, {\it this was the most important question ever asked in Banach space theory}. Hildebrandt's question and the mention Banach himself made to the approximation property in his book \cite{livrobanach} mark the starting point of one of the most long standing and productive lines of research in Functional Analysis, especially in Banach space theory, namely, the study of the approximation property and its variants. From Mazur's problem in the Scottish Book in 1936, passing through Grothendieck's memory \cite{grothendieck} in 1953, the counterexamples due to Enflo \cite{enflo} in 1973, Szankowski  \cite{szankowski} in 1981 and Willis \cite{willis} in 1992 and Casazza's survey \cite{Casazza} in 2001, up to recent striking developments, e.g. Figiel, Johnson and Pe{\l}czy\'nski \cite{fjp} in 2011 (and even very recent ones, e.g., Dineen and Mujica \cite{novo}), the approximation property and its variants have been a permanent source of challenging problems and of inspiration to generations of functional analysts. Quoting A. Pietsch once again, {\it life in Banach spaces with certain approximation properties is much easier} \cite[p.\,287]{pietschhistoria}.

The original problem, that concerns the approximation of compact operators by finite rank operators, led to many further questions that can be divided into two great groups: (i) quantitative refinements of the original problem that led, e.g., to the bounded, metric, uniform, bounded projection, commuting bounded approximation properties (see \cite{Casazza}); (ii) variations of the original problem concerning approximation, in different topologies, of bounded operators by operators belonging to different special classes (not only finite rank operators). In this paper we are concerned with the developments arising from the second trend.

As to the topologies involved in the problem, remember that a Banach space $E$ has the (classical, original) approximation property if (and only if) $E$-valued operators can be approximated, with respect to the compact-open topology, by finite rank operators. Once locally convex non-normed topologies are part of the game, the two following consequences were inevitable: (i) The investigation of the approximation property in locally convex spaces, in particular in spaces of holomorphic functions, a trend that was initiated by Aron and Schottenloher \cite{as} in 1976 and reaches our days with the Dineen and Mujica trilogy \cite{dm1, dm2, dm3} (see also \cite{BB}). (ii) The consideration of the approximation of operators by simpler ones with respect to different (locally convex, or at least linear) topologies in the spaces of linear operators.

The first variant of the classical approximation property (AP) in the line we are interested here is the compact approximation property (CAP), which goes back to Banach's book \cite[p.\,237]{livrobanach}, that regards the approximation by compact operators with respect to the compact-open topology. It was only in 1992 that Willis \cite{willis} proved that AP $\neq$ CAP, and it was a strong motivation for mathematicians to consider the problem of approximation by operators belonging to different classes. By the time of Willis' counterexample, the study of special classes of linear operators had been successfully systematized by A. Pietsch with his theory of Operator Ideals \cite{pietsch} (the first edition of Pietsch's book appeared in 1978). The consideration of problems on the approximation by operators belonging to a given operator ideal was a question of time, and indeed a number of approximation properties (APs) with respect to operator ideals have been studied in the last three decades, see, for example, \cite{BB, delgadooja, Delgado-Pineiro, gw, karnsinha, LassalleTurco, OjaIsrael, reinov1, Reinov, reinov}. Furthermore, many other well explored variants of the AP are somehow related to operator ideals, see, for example, \cite{Bourgain-Reinov, Caliskan1, Caliskan4, delgadoJMAA, galicerlassalleturco, Kim2012, llo, lo,  ojanovo, oja, oja2008, oja2012, Reinov 82, sinhakarn}.

In this paper we propose a unification of the approximation properties determined by operator ideals. Our idea is based on the observation that the APs determined by operator ideals already studied in the literature are usually defined (or characterized) by the possibility of approximating operators belonging to a certain class by operators belonging to a smaller class with respect to a certain prescribed topology (many times the compact-open topology). In our approach operator ideals play the role of the classes of operators and we tried to figure out the conditions a topology should satisfy to be suitable for the study of approximation properties. Suitable in the sense that: (i) it should give rise to APs enjoying the usual properties an AP is expected to enjoy; (ii) the resulting APs should recover the already studied APs (at least many important ones) as particular instances; (iii) results about the already studied APs should be particular cases of more general results in this new environment. Our proposal is the concept of {\it ideal topology} detailed in Definition \ref{def1}. The examples of ideal topologies we provide encompass most of the topologies usually used in the study of the APs (cf. Examples \ref{ex1}, \ref{corcot}, \ref{exideal}). The notion of $({\cal I}, {\cal J}, \tau)$-approximation property, as defined in the abstract, recovers a number of APs studied before. The results we prove in Sections \ref{action} and \ref{ptp} make clear that known results about already studied APs can be extended/generalized to our more general setting. Assembling all this information we believe that ideal topologies furnish a suitable framework to study approximation properties in Banach spaces in a rather unified and general way.

The paper is organized as follows: In Section \ref{idtop} we define ideal topologies, give plenty of examples, and we introduce the notion of $({\cal I}, {\cal J}, \tau)$-approximation property, where ${\cal I},{\cal J}$ are operator ideals and $\tau$ is an ideal topology. Several well studied approximation properties in Banach spaces are shown to be particular instances of this just defined abstract concept. In Section \ref{action} we extend/generalize results from \cite{delgadooja, choikimlee} on APs to the language of $({\cal I}, {\cal J}, \tau)$-APs. To reinforce this unifying feature of our new concept, in Section \ref{ptp} we introduce the notion of projective ideal topology in order to prove that recent results from \cite{erhanpilar, BB} on APs in (symmetric) projective tensor products of Banach spaces are particular instances of much more general results in the context of $({\cal I}, {\cal J}, \tau)$-APs. Of course other APs can be found and many other results can be extended/generalized/rephrased within the realm of $({\cal I}, {\cal J}, \tau)$-APs, but we think the examples/results we provide are enough for ideal topologies and $({\cal I}, {\cal J}, \tau)$-APs to prove their worth.

Throughout the paper $E,E_1, \ldots, E_n, F, G, G_1, \ldots, G_n$ are Banach spaces over $\mathbb{K} = \mathbb{R}$ or $\mathbb{C}$. The closed convex hull of a subset $A$ of a Banach space is denoted by $\overline{\rm co}(A)$. By ${\cal L}(E;F)$ we denote the Banach space of bounded linear operators from $E$ to $F$ endowed with the usual operator norm. Given $u \in {\cal L}(E;F)$ and a bounded subset $A \subseteq E$, we use the standard notation
$$\|u\|_A := \sup_{x \in A}\|u(x)\|. $$
The identity operator on a Banach space $E$ is denoted by ${\rm id}_E$ and the symbol $B_E$ stands for the closed unit ball of $E$. Operator ideals are always considered in the sense of Pietsch \cite{defantfloret, pietsch}. By ${\cal L}$ we denote the ideal of all bounded operators between Banach spaces and by $\cal F$  and $\cal K$ the ideals of finite rank and compact operators, respectively. Given a subset $A$ of a topological space $(X, \tau)$, by $\overline{A}^{\,\tau}$ we mean the closure of $A$ in $X$ with respect to the topology $\tau$.

The space of continuous $n$-linear mappings from $E_1 \times \cdots \times E_n$ to $F$ is denoted by ${\cal L}(E_1, \ldots, E_n;F)$ (${\cal L}(^nE;F)$ if $E_1 = \cdots = E_n = E$), and the space of continuous $n$-homogeneous polynomials from $E$ to $F$ by ${\cal P}(^nE;F)$. Both ${\cal L}(E_1, \ldots, E_n;F)$ and ${\cal P}(^nE;F)$ are Banach spaces with their usual sup norms. The completed $n$-fold projective tensor product of $E_1, \ldots, E_n$ is denoted by $E_1\widehat{\otimes}_{\pi}\cdots  \widehat{\otimes}_{\pi}E_n$, and the completed $n$-fold symmetric projective tensor product of $E$ by $\widehat{\otimes}_{s,\pi}^n E$. An elementary symmetric tensor $x \otimes \stackrel{(n)}{\cdots} \otimes x$ shall be simply denoted by $\otimes^n x$. Given an $n$-linear mapping $A \in {\cal L}(E_1, \ldots, E_n;F)$ and a polynomial $P \in {\cal P}(^nE;F)$, by $A_L$ and $P_L$ we denote their linearizations, that is,
$$A_L \in {\cal L}\left( E_1\widehat{\otimes}_{\pi}\cdots  \widehat{\otimes}_{\pi}E_n;F\right)~,~A_L(x_1 \otimes \cdots \otimes x_n) = A(x_1, \ldots, x_n)~~{\rm and} $$
$$P_L \in {\cal L}\left(\widehat{\otimes}_{s,\pi}^n E ; F\right)~,~P_L(\otimes^n x) = P(x).$$
For background on spaces of multilinear mappings and homogeneous polynomials we refer to \cite{livrodineen, livromujica}, and for (symmetric) projective tensor products of Banach spaces we refer to \cite{defantfloret, klaus, ryan}.

\section{Ideal topologies}\label{idtop}
In this section we define the notion of ideal topology and provide a method to generate many useful examples. The approximation property with respect to a pair of operator ideals and a given ideal topology is defined. We show that many approximation properties studied in the literature arise as particular instances of this general concept. A few basic properties are proved.

\begin{definition}\label{def1}\rm An {\it ideal topology} $\tau$ is a correspondence that, for all Banach spaces $E$ and $F$, assigns a linear topology, still denoted by $\tau$, on the space ${\cal L}(E;F)$ such that: for every operator ideal $\cal I$, if
$$\overline{\cal I}^{\,\tau}(E;F) := \overline{{\cal I}(E;F)}^{\,\tau}$$
for all Banach spaces $E$ and $F$, then $\overline{\cal I}^{\,\tau}$ is an operator ideal.
\end{definition}

\begin{remark}\label{remark1}\rm Let $\cal I$ be an arbitrary operator ideal. Since ${\cal I}(E;F)$ is a linear subspace of ${\cal L}(E;F)$ and $\left({\cal L}(E;F), \tau\right)$ is a topological vector space, it is always true that $\overline{\cal I}^{\,\tau}(E;F)$ is a linear subspace of ${\cal L}(E;F)$. Moreover, it is plain that ${\cal F}(E;F) \subseteq \overline{\cal I}^{\,\tau}(E;F)$. So, once a linear topology is assigned to each of the spaces ${\cal L}(E;F)$, the ideal property of $\overline{\cal I}^{\,\tau}$ is all that has to be checked to show that $\tau$ is an ideal topology.
\end{remark}

\begin{example}\label{ex1}\rm (a) It is folklore that the norm topology, which is the topology of uniform convergence on bounded sets, denoted by $\| \cdot \|$, is an ideal topology.\\
(b) The topology of pointwise convergence $\tau_P$, which is the topology of uniform convergence on finite sets, is an ideal topology. Indeed, the topology $\tau_P$ is linear because it is the locally convex topology generated by the seminorms ported by finite sets (or, equivalently, by singletons). It is straightforward to check that $\overline{\cal I}^{\,{\tau}_p}$ is an operator ideal for every operator ideal $\cal I$. The topology $\tau_P$ is sometimes refereed to as the strong operator topology (SOT).
\end{example}

Now we give a method to generate ideal topologies ranging from $\tau_P$ to $\|\cdot\|$. By BAN we denote the class of all Banach spaces over $\mathbb{K}$.

\begin{proposition} \label{propger} Suppose that for every Banach space $E$ it has been assigned a collection ${\cal A}(E)$ of bounded subsets of $E$ such that $\{x\} \in {\cal A}(E)$ for every $x \in E$ and
\begin{equation}\label{cond}u(A) \in {\cal A}(F)~{\rm for ~all~}E,F \in {\rm BAN}, A \in {\cal A}(E) {\rm ~and~} u \in {\cal L}(E;F).
\end{equation}
Then the topology $\tau_{\cal A}$ of uniform convergence on sets belonging to ${\cal A}(E)$, $E \in {\rm BAN}$, is an ideal topology. Moreover, $\tau_P \subseteq \tau_{\cal A} \subseteq \|\cdot\|$.
\end{proposition}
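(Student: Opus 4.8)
The plan is to establish the three assertions in turn: that $\tau_{\cal A}$ is a linear topology on each ${\cal L}(E;F)$, that $\overline{\cal I}^{\,\tau_{\cal A}}$ is an operator ideal for every operator ideal $\cal I$, and that $\tau_P \subseteq \tau_{\cal A} \subseteq \|\cdot\|$. For the first point I would note that, for each $A \in {\cal A}(E)$, the assignment $u \mapsto \|u\|_A = \sup_{x \in A}\|u(x)\|$ is a finite-valued seminorm on ${\cal L}(E;F)$ precisely because $A$ is bounded, and $\tau_{\cal A}$ is by construction the locally convex topology generated by the family of all such seminorms; hence it is a linear topology. By Remark \ref{remark1}, this immediately reduces the whole problem to verifying the ideal (composition) property of $\overline{\cal I}^{\,\tau_{\cal A}}$, since the linear-subspace property and the containment ${\cal F} \subseteq \overline{\cal I}^{\,\tau_{\cal A}}$ are already granted there.

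For the ideal property the approach is a single seminorm estimate carried along a net. Given $u \in \overline{\cal I}^{\,\tau_{\cal A}}(E;F)$, $w \in {\cal L}(E_0;E)$ and $v \in {\cal L}(F;F_0)$, I would choose a net $(u_\lambda) \subseteq {\cal I}(E;F)$ with $u_\lambda \to u$ in $\tau_{\cal A}$, observe that $v u_\lambda w \in {\cal I}(E_0;F_0)$ because $\cal I$ is an operator ideal, and then prove $v u_\lambda w \to vuw$ in $\tau_{\cal A}$, which places $vuw$ in $\overline{\cal I}^{\,\tau_{\cal A}}(E_0;F_0)$. The heart of the matter is that, for any $B \in {\cal A}(E_0)$,
$$\|vu_\lambda w - vuw\|_B \leq \|v\|\sup_{x\in B}\|(u_\lambda - u)(w(x))\| = \|v\|\,\|u_\lambda - u\|_{w(B)}.$$
The decisive step --- and the one place where hypothesis \eqref{cond} is genuinely used --- is that $w(B) \in {\cal A}(E)$, so that $\|u_\lambda - u\|_{w(B)} \to 0$ by the definition of $\tau_{\cal A}$-convergence, the factor $\|v\|$ being harmless. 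I expect this to be the main (and essentially the only) obstacle: without the stability condition \eqref{cond} there would be no reason for the image set $w(B)$ to belong to the collection on which the net converges uniformly, and the estimate would collapse.

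Finally, the inclusions follow by comparing generating seminorms. Because $\{x\} \in {\cal A}(E)$ for every $x$, each pointwise seminorm $u \mapsto \|u(x)\| = \|u\|_{\{x\}}$ is among those defining $\tau_{\cal A}$, giving $\tau_P \subseteq \tau_{\cal A}$. Conversely, every $A \in {\cal A}(E)$ is bounded, say $A \subseteq M B_E$ for some $M > 0$, so $\|u\|_A \leq M\|u\|$; thus each defining seminorm of $\tau_{\cal A}$ is norm-continuous, whence $\tau_{\cal A} \subseteq \|\cdot\|$. This completes the plan.
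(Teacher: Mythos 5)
Your proposal is correct and follows essentially the same route as the paper: reduce via Remark \ref{remark1} to the ideal property of $\overline{\cal I}^{\,\tau_{\cal A}}$, prove it through the estimate $\|v(u_\lambda-u)w\|_B \leq \|v\|\,\|u_\lambda-u\|_{w(B)}$ with hypothesis (\ref{cond}) guaranteeing $w(B) \in {\cal A}(E)$, and read off the inclusions $\tau_P \subseteq \tau_{\cal A} \subseteq \|\cdot\|$ from the singletons and the boundedness of the sets in ${\cal A}(E)$. The only (cosmetic) difference is that you phrase the approximation as net convergence while the paper fixes one approximant $T \in {\cal I}$ per pair $(A,\varepsilon)$, which forces it to treat $w=0$ separately when dividing by $\|w\|$ --- a nuisance your formulation avoids.
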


\begin{proof} First note that $\tau_{\cal A}$ is not the discrete topology on ${\cal L}(E;F)$ as ${\cal A}(E) \neq \emptyset$. So $\tau_{\cal A}$ is a linear topology
because, for all Banach spaces $E$ and $F$, it is the locally convex topology on ${\cal L}(E;F)$ generated by the seminorms
ported by the sets belonging to ${\cal A}(E)$, that is, by the seminorms
$$u \in {\cal L}(E;F) \mapsto \|u\|_A := \sup_{x \in A}\|u(x)\|, $$
where $A \in {\cal A}(E)$. Let $\cal I$ be an operator ideal. By Remark \ref{remark1} we just have to check that $ \overline{\cal
I}^{\,\tau_{\cal A }}$ enjoys the ideal property. Given operators $u \in {\cal
L}(E;F)$, $v \in \overline{\cal I}^{\,\tau_{A}}(F;G)$,
$0 \neq w \in {\cal L}(G;H)$, a subset $A$ of $E$ belonging to ${\cal A}(E)$
and $\varepsilon
> 0$, by (\ref{cond}) we know that $u(A) \in {\cal A}(F)$, so we can take an operator $T \in {\cal I}(F;G)$ such that $\|v - T\|_{u(A)} <
\frac{\varepsilon}{\|w\|}$. Then $w \circ T \circ u \in {\cal
I}(F;G)$ by the ideal property of $\cal I$ and
$$\|w \circ v \circ u - w \circ T \circ u\|_A \leq \|w\| \cdot \|v - T\|_{u(A)} < \varepsilon,$$
proving that $w \circ v \circ u \in  \overline{\cal
I}^{\,\tau_{\cal A}}(E;H)$. The second assertion is obvious because ${\cal A}(E)$ contains the singletons and is contained in the set of all bounded subsets of $E$.
\end{proof}

\begin{remark}\rm In order to have a linear topology, we have to avoid the discrete topology on ${\cal L}(E;F)$. This was done with the condition that ${\cal A}(E)$ contains the singletons. Of course this could have been done in many different ways, but the containment of the singletons also implies that $\tau_P \subseteq \tau_{\cal A}$. In Proposition \ref{tcpontual} the reader will understand why we are restricting ourselves to ideal topologies containing $\tau_P$.
\end{remark}

Proposition \ref{propger} allows us to show that several well known and useful topologies can be found in our way from $\tau_P$ to $\|\cdot\|$:

\begin{example}\label{corcot}\rm It is plain that bounded linear operators send compact sets to compact sets, so the compact-open topology $\tau_c$, which is the topology of uniform convergence on compact sets, is an ideal topology. The same happens for the following classes of subsets of Banach spaces: compact and convex sets, weakly compact sets, weakly compact and convex sets (remember that bounded linear operators are weak-weak continuous). So the topologies of uniform convergence on sets belonging to each of these classes are ideal topologies.
\end{example}

We need the following terminology to use Proposition \ref{propger} to give more useful examples of ideal topologies. Given an operator ideal $\cal I$ and a Banach space $E$, according to \cite{Stephani, GG, LassalleTurco} we define
$$C_{\cal I}(E) = \{ A \subseteq E : \exists F, \exists u \in {\cal I}(F;E) {\rm ~such~that~} A \subseteq u(B_F)\}, $$
$$K_{\cal I}(E) = \{\overline{A} :  A \subseteq E, \exists F, \exists K \subseteq F {\rm~ compact},\exists u \in {\cal I}(F;E) {\rm ~such~that~} A \subseteq u(K)\}.$$
The sets belonging to $C_{\cal I}(E)$ are called {\it $\cal I$-bounded sets} and the sets belonging to $K_{\cal I}(E)$ are called {\it $\cal I$-compact sets}.

\begin{example}\label{exideal}\rm Let $\cal I$ be an operator ideal.\\
(a) It is clear that $\cal I$-bounded sets are norm bounded and that singletons are $\cal I$-bounded (indeed, this is obvious for $x = 0$, and for $x \neq 0$ just pick a funcional $\varphi \in E'$ such that $\varphi(x) = \|x\|$ and note that $\varphi \otimes x \in {\cal I}(E;E)$ and $\varphi \otimes x\left(x/\|x\| \right) = x$). By the ideal property of $\cal I$ it follows that bounded linear operators send $\cal I$-bounded sets to $\cal I$-bounded sets, so the  topology $\tau_{C_{\cal I }}$ of uniform convergence on $\cal I$-bounded sets (cf. e.g., \cite{aronruedaprims}) is an ideal topology by Proposition \ref{propger}.

\medskip

\noindent(b) It is clear that $\cal I$-compact sets are norm bounded (actually they are norm compact) and, like before, that singletons are $\cal I$-compact. By the ideal property of $\cal I$ it follows that bounded linear operators send $\cal I$-compact sets to $\cal I$-compact sets, so the  topology $\tau_{K_{\cal I }}$ of uniform convergence on $\cal I$-compact sets (cf. e.g., \cite{LassalleTurco, Delgado-Pineiro}) is an ideal topology by Proposition \ref{propger}. In particular, the topology $\tau_{K_p}$ of uniform convergence on $p$-compact sets (cf. e.g., \cite{sinhakarn}) is an ideal topology. Indeed, if ${\cal K}_p$ denotes the ideal of $p$-compact operators, then $\tau_{K_p} = \tau_{K_{{\cal K}_p}}.$

\medskip

\noindent(c) For $q > 0$, a subset $A$ of a Banach space $E$ is a {\it Bourgain-Reinov $q$-compact set} (see \cite{Bourgain-Reinov, Reinov, Ain-Lillimets-Oja}), in symbols $A \in BR_q(E)$, if there is a $E$-valued absolutely $q$-summable sequence $(x_n)_n$ such that $A$ is contained in the closure of the absolutely convex hull of $\{x_1, x_2, \ldots,\}$. It is clear that Bourgain-Reinov $q$-compact sets are bounded and that singletons are Bourgain-Reinov $q$-compact. Since continuous linear operators send $q$-summable sequences to $q$-summable sequences, it follows that the class $BR_q$ of Bourgain-Reinov $q$-compact subsets of Banach spaces fulfills condition (\ref{cond}). By Proposition \ref{propger} results that the topology $\tau_{{BR}_q}$ of uniform convergence on Bourgain-Reinov $q$-compact sets is an ideal topology. For $q \geq 1$, the sets in $BR_q$ are also called relatively Grothendieck $q$-compact \cite{Kim2012}.
\end{example}

With plenty of useful linear topologies in hands we can define the approximation properties determined by a pair of operators ideals and a given linear topology:

\begin{definition} \rm Let ${\cal I}, {\cal J}$ be operator ideals and $\tau$ be an ideal topology. We say that a Banach space $E$ has the:\\
(a) $({\cal I}, {\cal J}, \tau)$-approximation property, $({\cal I}, {\cal J}, \tau)$-AP for short, if
$${\cal I}(F;E) \subseteq \overline{{\cal J}(F;E)}^{\, \tau} {\rm ~for~every~Banach~space~} F; $$
(b) $({\cal I}, {\cal J}, \tau)$-weak approximation property, $({\cal I}, {\cal J}, \tau)$-WAP for short, if
$${\cal I}(E;E) \subseteq \overline{{\cal J}(E;E)}^{\, \tau}. $$
\end{definition}

The examples below unfold that many well studied approximation properties are particular cases of our general concept. It is good to have in mind the following characterizations, which are immediate consequences of the ideal property of $\overline{{\cal I}}^{\,\tau}$:
\begin{align*} E {\rm ~has~ the~} ({\cal L}, {\cal I}, \tau){\rm-AP} & \Longleftrightarrow {\cal L}(E;E) \subseteq \overline{{\cal I}(E;E)}^{\, \tau}\\ &\Longleftrightarrow {\rm id}_E \in  \overline{{\cal I}(E;E)}^{\, \tau}\\& \Longleftrightarrow E {\rm ~has~ the~} ({\cal L}, {\cal I}, \tau){\rm-WAP}.
\end{align*}
By ${\cal I}^{\rm \,sur}$ we mean the surjective hull of the operator ideal $\cal I$.
\begin{example}\label{primex}\rm
(a) The classical approximation property coincides with the $({\cal K}, {\cal F}, \|\cdot\|)$-AP, with the $({\cal L}, {\cal F}, \tau_c)$-AP (hence with the $({\cal L}, {\cal F}, \tau_c)$-WAP).

\medskip

\noindent(b) The compact approximation property coincides with the $({\cal L}, {\cal K}, \tau_c)$-AP (hence with the $({\cal L}, {\cal K}, \tau_c)$-AP).

\medskip

\noindent(c) Let $\cal I$ be an operator ideal. The $\cal I$-approximation property of \cite{BB} coincides with the $({\cal L}, {\cal I}, \tau_c)$-AP (hence with the $({\cal L}, {\cal I}, \tau_c)$-AP).

\medskip

\noindent(d) The weak approximation property of Choi and Kim \cite{Choi-Kim 2006} coincides with the $({\cal K}, {\cal F},\tau_c)$-WAP.

\medskip

\noindent(e) The quasi approximation property of Choi and Kim \cite{Choi-Kim 2006} coincides with the $({\cal K}, {\cal F},\|\cdot\|)$-WAP.

\medskip

\noindent(f) Let $\cal I$ be an operator ideal. The $\cal I$-approximation property of Lassalle and Turco \cite{LassalleTurco} and the approximation property with respect to the operator ideal $\cal I$ of Delgado and Pi\~neiro \cite{Delgado-Pineiro} both coincide with the $({\cal L}, {\cal F}, \tau_{K_{\cal I}})$-AP (hence with the $({\cal L}, {\cal F}, \tau_{K_{\cal I}})$-WAP) and with the $({\cal I}^{\rm \,sur}, {\cal F}, \tau_c)$-AP (see \cite[Theorem 2.3]{Delgado-Pineiro}) .

\medskip

\noindent(g) The $p$-approximation property of Sinha and Karn \cite{sinhakarn} (see also \cite{delgadooja}), $1 \leq p < \infty$, coincides with the $({\cal L}, {\cal F}, \tau_{K_{{\cal N}_p}})$-AP, where ${\cal N}_p$ is the ideal of $p$-nuclear operators \cite{LassalleTurco} (hence with the $({\cal L}, {\cal F}, \tau_{K_{{\cal N}_p}})$-WAP), with the $({\cal L}, {\cal F}, \tau_{K_{{\cal K}_p}})$-AP), where ${\cal K}_p$ is the ideal of $p$-compact operators (hence with the $({\cal L}, {\cal F}, \tau_{K_{{\cal K}_p}})$-WAP) and with the $({\cal K}_p^{\rm \,sur}, {\cal F}, \tau_c)$-AP (see \cite[Corollary 2.6]{Delgado-Pineiro}).

\medskip

\noindent(h) Let $0 < p \leq 1$, $q = p/(1-p)$ and $BR_q$ be the class of Bourgain-Reinov $q$-compact subsets of Banach spaces (cf. Example \ref{exideal}(c)). The aproximation property of order $p$ of Reinov \cite{Reinov 82} coincides with the $({\cal L}, {\cal F}, \tau_{BR_q})$-AP (hence with the $({\cal L}, {\cal F}, \tau_{BR_q})$-WAP) (see \cite{Bourgain-Reinov, Reinov} and \cite[p.\,70]{Delgado-Pineiro}). For $q \geq1$, the approximation property of order $p$ is also called the Grothendieck $q$-approximation property (see \cite{Kim2012}).

\medskip

\noindent(i) A long standing problem (see \cite[Problem 1.e.9]{LT}) asks whether the classical approximation property coincides with the $({\cal K}, {\cal F}, \|\cdot\|)$-WAP.
\end{example}

The topology $\tau_P$ of pointwise convergence is an extreme case (and this is the reason why we are interested in ideal topologies containing $\tau_P$ -- cf. Proposition \ref{propger}):

\begin{proposition}\label{tcpontual} Regardless of the Banach space $E$ and the operator ideals ${\cal I}$ and $ {\cal J}$, $E$ has the $({\cal I}, {\cal J}, \tau_P)$-AP and the $({\cal I}, {\cal J}, \tau_P)$-WAP.
\end{proposition}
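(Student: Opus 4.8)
The plan is to reduce everything to a single elementary fact: finite rank operators are dense in ${\cal L}(F;E)$ for the topology $\tau_P$ of pointwise convergence, combined with the fundamental property that every operator ideal contains the finite rank operators.

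First I would observe that, since ${\cal F}$ is the smallest operator ideal, ${\cal F}(F;E) \subseteq {\cal J}(F;E)$ for every Banach space $F$; taking $\tau_P$-closures gives $\overline{{\cal F}(F;E)}^{\,\tau_P} \subseteq \overline{{\cal J}(F;E)}^{\,\tau_P}$. Hence it suffices to show that ${\cal I}(F;E) \subseteq \overline{{\cal F}(F;E)}^{\,\tau_P}$, and since ${\cal I}(F;E) \subseteq {\cal L}(F;E)$, it is in fact enough to prove the stronger inclusion ${\cal L}(F;E) \subseteq \overline{{\cal F}(F;E)}^{\,\tau_P}$.

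The key step is to establish something slightly stronger than mere approximation, namely \emph{exact} interpolation on finite sets. Given $u \in {\cal L}(F;E)$ and a basic $\tau_P$-neighborhood of $u$ determined by finitely many points $x_1, \ldots, x_n \in F$ and some $\varepsilon > 0$, I would let $M$ be the (finite-dimensional) span of the $x_i$, choose a basis $e_1, \ldots, e_k$ of $M$, and use Hahn--Banach to extend the coordinate functionals of $M$ to functionals $\varphi_1, \ldots, \varphi_k \in F'$ with $\varphi_j(e_i) = \delta_{ij}$. Setting $T = \sum_{j=1}^{k} \varphi_j \otimes u(e_j)$ produces a finite rank operator satisfying $T|_M = u|_M$, because every $x \in M$ can be written as $x = \sum_j \varphi_j(x)\, e_j$, whence $T(x) = \sum_j \varphi_j(x)\, u(e_j) = u(x)$ by linearity of $u$. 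In particular $\|u - T\|_{\{x_1, \ldots, x_n\}} = 0 < \varepsilon$, so every $\tau_P$-neighborhood of $u$ meets ${\cal F}(F;E)$, giving $u \in \overline{{\cal F}(F;E)}^{\,\tau_P}$ and therefore the $({\cal I}, {\cal J}, \tau_P)$-AP.

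Finally, the $({\cal I}, {\cal J}, \tau_P)$-WAP is exactly the special case $F = E$ of the inclusion just proved, so it follows at once. I do not expect any genuine obstacle here: the whole proposition rests on the fact that a bounded operator can be interpolated exactly by a finite rank operator on any finite set, which trivializes the $\tau_P$-closure. The only point requiring mild care is the bookkeeping when the $x_i$ are linearly dependent, and this is handled cleanly by passing to a basis of their span and invoking the linearity of $u$, as above.
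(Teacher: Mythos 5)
Your proof is correct, and it reorganizes the argument in a way that differs from the paper's. The paper reduces everything to the single fact that ${\rm id}_E \in \overline{{\cal F}(E;E)}^{\,\tau_P}$ (quoted from the literature rather than proved) and then invokes the framework it has just built: since $\tau_P$ is an ideal topology, $\overline{\cal F}^{\,\tau_P}$ is an operator ideal, so writing $u = {\rm id}_E \circ u$ propagates the conclusion from the identity to every $u \in {\cal L}(F;E)$ and every $F$. You instead prove the density ${\cal L}(F;E) \subseteq \overline{{\cal F}(F;E)}^{\,\tau_P}$ directly and uniformly in $F$, via exact Hahn--Banach interpolation on the span of the finitely many points defining a basic $\tau_P$-neighborhood (the coordinate functionals on the finite-dimensional span are automatically continuous, so the extension step is legitimate), after which the inclusions ${\cal I}(F;E) \subseteq {\cal L}(F;E)$ and $\overline{{\cal F}(F;E)}^{\,\tau_P} \subseteq \overline{{\cal J}(F;E)}^{\,\tau_P}$ finish both the AP and, taking $F = E$, the WAP. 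What each route buys: yours is entirely self-contained, needs neither the external citation nor the fact that $\overline{\cal F}^{\,\tau_P}$ enjoys the ideal property, and actually yields a slightly stronger conclusion (exact agreement, not just $\varepsilon$-approximation, on every finite set); the paper's version is shorter and deliberately showcases the ideal-topology machinery that the rest of the article depends on, which is in keeping with its purpose. Both are complete proofs, and your underlying interpolation construction is essentially what lies beneath the fact the paper cites.
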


\begin{proof} It is easy to see that, for every Banach space $E$, ${\rm id}_E \in \overline{{\cal F}(E;E)}^{\,\tau_P}$ (see \cite[Proposition 3.14]{teselissitsin}). Since $\overline{\cal F}^{\,\tau_P}$ is an operator ideal, we have $\overline{{\cal F}(F;E)}^{\,\tau_P} = {\cal L}(F;E)$ regardless of the Banach spaces $E$ and $F$. Now the result is immediate.
\end{proof}

Several usual properties of the known approximation properties extend to this more general context. We finish this section showing three examples that illustrate the situation and will be useful later:

\begin{proposition}\label{complem} Let ${\cal I}, {\cal J}$ be operator ideals and $\tau$ be an ideal topology. If the Banach space $E$ has the $({\cal I}, {\cal J}, \tau)$-AP ($({\cal I}, {\cal J}, \tau)$-WAP, respectively) and the Banach space $F$ is isomorphic to a complemented subspace of $E$, then $F$ has the $({\cal I}, {\cal J}, \tau)$-AP ($({\cal I}, {\cal J}, \tau)$-WAP, respectively) as well.
\end{proposition}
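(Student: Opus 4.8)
The plan is to reduce everything to the ideal property of the closure $\overline{\cal J}^{\,\tau}$, which is an operator ideal precisely because $\tau$ is an ideal topology. First I would record the structural data supplied by the hypothesis: since $F$ is isomorphic to a complemented subspace of $E$, there are bounded linear operators $i \in {\cal L}(F;E)$ and $p \in {\cal L}(E;F)$ with $p \circ i = {\rm id}_F$ (take $i$ to be the isomorphism of $F$ onto the complemented copy $M \subseteq E$ followed by the inclusion, and $p$ to be a projection of $E$ onto $M$ followed by the inverse isomorphism). These two operators will transport operators between $F$ and $E$ and back.

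For the AP case, fix a Banach space $G$ and $u \in {\cal I}(G;F)$; the goal is $u \in \overline{{\cal J}(G;F)}^{\,\tau}$. By the ideal property of $\cal I$ we have $i \circ u \in {\cal I}(G;E)$, so the $({\cal I}, {\cal J}, \tau)$-AP of $E$ gives $i \circ u \in {\cal I}(G;E) \subseteq \overline{{\cal J}(G;E)}^{\,\tau} = \overline{\cal J}^{\,\tau}(G;E)$. Now I would apply the ideal property of $\overline{\cal J}^{\,\tau}$ with the left multiplier $p$, obtaining $p \circ (i \circ u) \in \overline{\cal J}^{\,\tau}(G;F)$. Since $p \circ i \circ u = (p \circ i) \circ u = {\rm id}_F \circ u = u$, this reads $u \in \overline{{\cal J}(G;F)}^{\,\tau}$, as required. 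As $G$ was arbitrary, $F$ has the $({\cal I}, {\cal J}, \tau)$-AP.

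For the WAP case the argument is the same sandwich performed on both sides. Given $u \in {\cal I}(F;F)$, the ideal property of $\cal I$ yields $i \circ u \circ p \in {\cal I}(E;E)$, and the $({\cal I}, {\cal J}, \tau)$-WAP of $E$ places this operator in $\overline{{\cal J}(E;E)}^{\,\tau} = \overline{\cal J}^{\,\tau}(E;E)$. Sandwiching again via the ideal property of $\overline{\cal J}^{\,\tau}$, now with $p$ on the left and $i$ on the right, gives $p \circ (i \circ u \circ p) \circ i \in \overline{\cal J}^{\,\tau}(F;F)$, and the inner composition collapses to $(p \circ i) \circ u \circ (p \circ i) = u$, so $u \in \overline{{\cal J}(F;F)}^{\,\tau}$.

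The point worth flagging is that there is no genuine obstacle once one resists the temptation to argue topologically. A naive approach would approximate $i \circ u$ by a $\tau$-convergent net from ${\cal J}(G;E)$ and then push it through $p$, which would require the left-composition map $v \mapsto p \circ v$ to be $\tau$-continuous from $({\cal L}(G;E),\tau)$ to $({\cal L}(G;F),\tau)$ — a fact not granted by the bare definition of an ideal topology. The entire difficulty is already absorbed into the single hypothesis that $\overline{\cal J}^{\,\tau}$ is an operator ideal, so invoking its ideal property directly, rather than the defining net approximation, is exactly what makes the proof work uniformly for an arbitrary ideal topology $\tau$.
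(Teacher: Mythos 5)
Your proof is correct and follows essentially the same route as the paper's: compose with the inclusion $i$ (the paper's $u$) to lift into $E$, apply the $({\cal I},{\cal J},\tau)$-AP (resp.\ WAP) of $E$, and then use the ideal property of $\overline{\cal J}^{\,\tau}$ with the projection $p$ (the paper's $v$) to come back to $F$; the paper merely states that the WAP case is analogous, which you spell out with the two-sided sandwich exactly as intended. Your closing remark is also on point: the ideal property of $\overline{\cal J}^{\,\tau}$, guaranteed by $\tau$ being an ideal topology, is precisely what replaces any unavailable continuity of the composition maps.
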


\begin{proof} There are continuous linear operators $u \colon F \longrightarrow E$ and $v \colon E \longrightarrow F$ such that $v \circ u = {\rm id}_F$. Let $G$ be a Banach space and $T \in {\cal I}(G;F)$. Then $u \circ T \in {\cal I}(G;E)$, and by the $({\cal I}, {\cal J}, \tau)$-AP of $E$ we know that $u \circ T \in \overline{{\cal J}(G;E)}^{\, \tau}$. By the ideal property of $\overline{{\cal J}}^{\, \tau}$ it follows that $T = v \circ u \circ T \in \overline{{\cal J}(G;F)}^{\, \tau}$, proving that $F$ has the $({\cal I}, {\cal J}, \tau)$-AP. The case of the WAP is analogous.
\end{proof}

\begin{proposition} Let $\tau$ be an ideal topology, ${\cal I}, {\cal J}$ be operator ideals and $E_1, \ldots, E_n$ be Banach spaces. Then the finite direct sum $\bigoplus_{j=1}^n E_j$ has the $({\cal I}, {\cal J}, \tau)$-AP (the $({\cal I}, {\cal J}, \tau)$-WAP, respectively) if and only if $E_j$ has the $({\cal I}, {\cal J}, \tau)$-AP (the $({\cal I}_j, {\cal J}_j, \tau)$-WAP, respectively) for $j = 1, \ldots, n$.
\end{proposition}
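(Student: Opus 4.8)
The plan is to reduce everything to the canonical inclusions $u_j \colon E_j \to \bigoplus_{k=1}^{n} E_k$ and projections $\pi_j \colon \bigoplus_{k=1}^{n} E_k \to E_j$, which satisfy $\pi_j \circ u_j = {\rm id}_{E_j}$ for each $j$ and $\sum_{j=1}^{n} u_j \circ \pi_j = {\rm id}_{\bigoplus_{k} E_k}$. Each pair $(u_j,\pi_j)$ exhibits $E_j$ as a complemented subspace of the direct sum, so both ``only if'' implications (for the AP and for the WAP) follow at once from Proposition \ref{complem}: if $\bigoplus_{k} E_k$ has the property, then so does each $E_j$. Hence the real content is in the two ``if'' implications, and I would treat the AP and the WAP separately because, as I explain below, they behave quite differently.

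For the AP, suppose each $E_j$ has the $({\cal I},{\cal J},\tau)$-AP. Fix a Banach space $F$ and $T \in {\cal I}(F;\bigoplus_{k} E_k)$. For each $j$ the composition $\pi_j \circ T$ lies in ${\cal I}(F;E_j)$ by the ideal property of ${\cal I}$, so the AP of $E_j$ gives $\pi_j \circ T \in \overline{{\cal J}(F;E_j)}^{\,\tau}$; composing with $u_j$ on the left and using that $\overline{{\cal J}}^{\,\tau}$ is an operator ideal yields $u_j \circ \pi_j \circ T \in \overline{{\cal J}(F;\bigoplus_{k} E_k)}^{\,\tau}$. Since this set is a linear subspace (Remark \ref{remark1}) and $T = \sum_{j=1}^{n} u_j \circ \pi_j \circ T$ is a finite sum, I conclude $T \in \overline{{\cal J}(F;\bigoplus_{k} E_k)}^{\,\tau}$, which is exactly the AP of the direct sum. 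No interaction between distinct summands occurs here, precisely because the AP quantifies over an arbitrary fixed domain $F$ while only the range is decomposed.

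For the WAP the ``if'' implication is where I expect the main obstacle. Writing $E = \bigoplus_{k} E_k$ and decomposing $S \in {\cal I}(E;E)$ into blocks $S_{ij} := \pi_i \circ S \circ u_j \in {\cal I}(E_j;E_i)$, one gets $S = \sum_{i,j} u_i \circ S_{ij} \circ \pi_j$. The diagonal blocks are handled exactly as in the AP case: the WAP of $E_j$ gives $S_{jj} \in \overline{{\cal J}(E_j;E_j)}^{\,\tau}$, hence $u_j \circ S_{jj} \circ \pi_j \in \overline{{\cal J}(E;E)}^{\,\tau}$. The hard part will be the off-diagonal blocks $S_{ij}$ with $i \neq j$. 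Indeed, compressing by $\pi_i$ and $u_j$ shows that the WAP of $E$ is equivalent to the whole family of ``mixed'' conditions ${\cal I}(E_j;E_i) \subseteq \overline{{\cal J}(E_j;E_i)}^{\,\tau}$ for all $i,j$, whereas the WAPs of the individual summands only supply the diagonal case $i=j$; and there is no general mechanism turning operators $E_j \to E_i$ into operators $E_j \to E_j$ or $E_i \to E_i$. So the crux is to produce these cross-summand approximations for $i \neq j$, and I would expect this to require either an extra structural hypothesis linking the summands or the assumption that the spaces $E_j$ coincide, in which case each off-diagonal condition collapses to a diagonal WAP and the argument closes just as for the AP.
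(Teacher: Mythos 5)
Your argument coincides with the paper's wherever the paper actually gives one. For the AP, the paper proves exactly your ``if'' implication, with the same decomposition $u=\sum_{j=1}^n i_j\circ q_j\circ u$ through the canonical injections and projections: each $q_j\circ u\in{\cal I}(F;E_j)$ is approximated using the AP of $E_j$, then one composes with $i_j$ via the ideal property of $\overline{{\cal J}}^{\,\tau}$ and sums inside the linear subspace $\overline{{\cal J}(F;\bigoplus_k E_k)}^{\,\tau}$. The ``only if'' implication is not written out in the paper at all; it is, as you say, immediate from Proposition \ref{complem}, since each $E_j$ is complemented in the direct sum. So on the AP side your proposal is complete and essentially identical to the paper's proof.

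On the WAP side the paper offers nothing beyond the sentence ``The case of the WAP is analogous,'' and your diagnosis of why the argument is \emph{not} analogous is correct. Decomposing $S\in{\cal I}(E;E)$, $E=\bigoplus_{j} E_j$, into blocks $S_{ij}=q_i\circ S\circ i_j\in{\cal I}(E_j;E_i)$, the summands' WAPs handle only the diagonal blocks, and compression shows, exactly as you observe, that the WAP of $E$ is equivalent to the full family of mixed inclusions ${\cal I}(E_j;E_i)\subseteq\overline{{\cal J}(E_j;E_i)}^{\,\tau}$ for all $i,j$. Nothing in the hypotheses yields the off-diagonal cases: the WAP of $E_j$ concerns only endomorphisms of $E_j$, and an operator in ${\cal I}(E_j;E_i)$ with $i\neq j$ need not factor through an ${\cal I}$-endomorphism of either summand. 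The two situations where the claim does close are the ones you identify: all $E_j$ equal, in which case every mixed inclusion is the diagonal one, and ${\cal I}={\cal L}$, where the WAP coincides with the AP because ${\rm id}_{E_j}\in\overline{{\cal J}(E_j;E_j)}^{\,\tau}$ propagates to arbitrary targets by the ideal property. So the gap you flag is genuine, but it is a gap in the paper's proof --- and arguably in the statement itself, whose unexplained ``$({\cal I}_j,{\cal J}_j,\tau)$'' in the WAP clause suggests it was not finalized --- rather than an idea you failed to find: formally your proposal leaves the WAP ``if'' direction unproved, but so, in substance, does the paper.
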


\begin{proof} Assume that $E_j$ has the $({\cal I}, {\cal J}, \tau)$-AP for $j = 1, \ldots, n$. For each $j$ let $i_j \colon E_j \longrightarrow \bigoplus_{j=1}^n E_j$ and $q_j \colon \bigoplus_{j=1}^n E_j \longrightarrow E_j$ be the canonical operators. Given a Banach space $F$ and an operator $u \in {\cal I}\left(F; \bigoplus_{j=1}^n E_j \right)$, we have that $q_j \circ u \in {\cal I}\left(F;E_j \right)$, hence $q_j \circ u \in \overline{\cal J}^{\, \tau}\left(F;E_j \right)$. Then each $i_j \circ q_j \circ u \in \overline{\cal J}^{\, \tau}\left(F; \bigoplus_{j=1}^n E_j \right)$, so $u = \sum_{j=1}^n i_j \circ q_j \circ u \in \overline{\cal J}^{\, \tau}\left(F; \bigoplus_{j=1}^n E_j \right)$. The case of the WAP is analogous.
\end{proof}

\begin{proposition} Let ${\cal I}, {\cal J}_1, {\cal J}_2$ be operator ideals and let $\tau_1, \tau_2$ be ideal topologies such that ${\cal J}_1 \subseteq {\cal J}_2 \subseteq \overline{{\cal J}_1}^{\,\tau_1}$ and $\tau_2 \subseteq \tau_1$. Then a Banach space $E$ has the $({\cal I}, {\cal J}_1, \tau_2)$-AP (the $({\cal I}, {\cal J}_1, \tau_2)$-WAP, respectively) if and only if $E$ has the $({\cal I}, {\cal J}_2, \tau_2)$-AP (the $({\cal I}, {\cal J}_2, \tau_2)$-WAP, respectively).
\end{proposition}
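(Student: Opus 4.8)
The plan is to reduce the stated equivalence to a single set-theoretic identity between closures, namely
$$\overline{{\cal J}_1(F;E)}^{\,\tau_2} = \overline{{\cal J}_2(F;E)}^{\,\tau_2}$$
for every Banach space $F$ (and the case $F = E$ for the WAP). Once this identity is in hand, both the AP and the WAP statements follow at once, since by definition $E$ has the $({\cal I}, {\cal J}_k, \tau_2)$-AP precisely when ${\cal I}(F;E) \subseteq \overline{{\cal J}_k(F;E)}^{\,\tau_2}$ for every $F$, and the two right-hand sides coincide; likewise for the WAP with $F = E$. So no direct manipulation of ${\cal I}$ is needed at all: the ideal ${\cal I}$ appears only on the left of an inclusion whose right-hand side we are showing to be independent of the choice between ${\cal J}_1$ and ${\cal J}_2$.

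To prove the identity I would establish the two inclusions separately. The inclusion ``$\subseteq$'' is immediate: from ${\cal J}_1 \subseteq {\cal J}_2$ we get ${\cal J}_1(F;E) \subseteq {\cal J}_2(F;E)$, and monotonicity of the closure operator gives $\overline{{\cal J}_1(F;E)}^{\,\tau_2} \subseteq \overline{{\cal J}_2(F;E)}^{\,\tau_2}$. For the reverse inclusion ``$\supseteq$'' I would bring in the other two hypotheses. Starting from ${\cal J}_2 \subseteq \overline{{\cal J}_1}^{\,\tau_1}$ we have ${\cal J}_2(F;E) \subseteq \overline{{\cal J}_1(F;E)}^{\,\tau_1}$. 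Since $\tau_2 \subseteq \tau_1$ makes $\tau_1$ the finer topology, $\tau_1$-closures are contained in $\tau_2$-closures, so $\overline{{\cal J}_1(F;E)}^{\,\tau_1} \subseteq \overline{{\cal J}_1(F;E)}^{\,\tau_2}$. Combining these, ${\cal J}_2(F;E) \subseteq \overline{{\cal J}_1(F;E)}^{\,\tau_2}$; taking $\tau_2$-closures and invoking idempotence of the closure, $\overline{\overline{A}^{\,\tau_2}}^{\,\tau_2} = \overline{A}^{\,\tau_2}$, yields $\overline{{\cal J}_2(F;E)}^{\,\tau_2} \subseteq \overline{{\cal J}_1(F;E)}^{\,\tau_2}$, as required.

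The only point that demands care --- and the one I would flag as the sole place where an error could creep in --- is the direction of the closure comparison under comparable topologies: the hypothesis $\tau_2 \subseteq \tau_1$ says that $\tau_1$ has more open (hence more closed) sets, so it is the $\tau_1$-closure of a set that is the \emph{smaller} one. This is consistent with the paper's convention $\tau_P \subseteq \tau_{\cal A} \subseteq \|\cdot\|$, in which the norm topology is the finest; it is easy to invert this inclusion by accident, which would collapse the argument. Notably, no completeness, no operator-ideal axioms beyond ${\cal J}_1 \subseteq {\cal J}_2$, and no use of the defining ideal-topology property of $\tau_1$ or $\tau_2$ are actually needed for the identity itself; the three hypotheses enter purely as inclusions of sets together with one comparison of topologies. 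I therefore expect the proof to be short and entirely formal, with the WAP case obtained verbatim by specializing $F = E$.
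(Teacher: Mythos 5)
Your proof is correct and takes essentially the same route as the paper, which also reduces everything to the identity between $\tau_2$-closures and establishes the nontrivial inclusion via the chain $\overline{{\cal J}_2(F;E)}^{\,\tau_2} \subseteq \overline{\left(\overline{{\cal J}_1(F;E)}^{\,\tau_1}\right)}^{\,\tau_2} \subseteq \overline{\left(\overline{{\cal J}_1(F;E)}^{\,\tau_2}\right)}^{\,\tau_2} = \overline{{\cal J}_1(F;E)}^{\,\tau_2}$, i.e., exactly your monotonicity, closure-comparison ($\tau_2 \subseteq \tau_1$ implies $\overline{A}^{\,\tau_1} \subseteq \overline{A}^{\,\tau_2}$), and idempotence steps. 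You also get the direction of the closure comparison right, which is indeed the one delicate point.
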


\begin{proof} One implication follows immediately from the inclusion ${\cal J}_1 \subseteq {\cal J}_2$ and the reverse implication follows from $$\overline{{\cal J}_2(F;E)}^{\,\tau_2} \subseteq \overline{\left(\overline{{\cal J}_1(F;E)}^{\,\tau_1}\right)}^{\,\tau_2} \subseteq \overline{\left(\overline{{\cal J}_1(F;E)}^{\,\tau_2}\right)}^{\,\tau_2} =\overline{{\cal J}_1(F;E)}^{\,\tau_2}.$$
\end{proof}

\section{Ideal topologies in action}\label{action}
An important aspect of the approximation properties in Banach spaces is the fact that, sometimes, the approximation by two different classes of operators with respect to two different topologies actually coincide. The search for this kind of situation in our case can be rephrased as: When does the equality $({\cal I}_1, {\cal J}_1, \tau_1)$-AP = $({\cal I}_2, {\cal J}_2, \tau_2)$-AP hold? What about the WAP? There are several trivial coincidences, for example:\\
$\bullet$ Let ${\cal I}_1, {\cal I}_2, {\cal J}_1, {\cal J}_2$ be operator ideals and $\tau_1, \tau_2$ be ideal topologies such that ${\cal I}_2 \subseteq {\cal I}_1, {\cal J}_1 \subseteq {\cal J}_2$ and $\tau_2 \subseteq \tau_1$. If a Banach space $E$ has the $({\cal I}_1, {\cal J}_1, \tau_1)$-AP, then $E$ has the $({\cal I}_2, {\cal J}_2, \tau_2)$-AP. The same holds for the corresponding WAP's. \\
$\bullet$ As we have already remarked, for all Banach spaces $E$, operator ideals $\cal J$ and ideal topologies $\tau$, the following are equivalent:\\
(i) ${\rm id}_E \in \overline{{\cal J}(E;E)}^{\, \tau}$,\\
(ii) $E$ has the $({\cal L}, {\cal J}, \tau)$-AP (hence $E$ has the $({\cal I}, {\cal J}, \tau)$-AP for every operator ideal $\cal I$),\\
(iii) $E$ has the $({\cal L}, {\cal J}, \tau)$-WAP (hence $E$ has the $({\cal I}, {\cal J}, \tau)$-WAP for every operator ideal $\cal I$).

The aim of this section is to make clear that the abstract notion of ideal topology is appropriate to detect this kind of coincidence. We prove some  non-trivial coincidences that extend and generalize previous results, mainly from \cite{delgadooja} and \cite{choikimlee}. The argument of the following lemma shall be repeated several times, so we state it separately for further reference.

\begin{lemma} \label{lemmaaderencia}Let ${\cal I}$ be an operator ideal, $E$, $F_1$ and $F_2$ be Banach spaces, ${\cal A}_i$ be a collection of bounded subsets of $F_i$ and $\tau_i$ be the locally convex topology on ${\cal L}(F_i;E)$ generated by the seminorms ported by the sets belonging to ${\cal A}_i$, $i = 1,2$. If $R\in
\overline{\mathcal{I}(F_1;E)}^{\tau_1}$ and $S\in \mathcal{L}(F_2;F_1)$ is such that $S(A) \in {\cal A}_1$ for every $A \in  {\cal A}_2$, then $R \circ S\in
\overline{\mathcal{I}(F_2;E)}^{\tau_2}$.
\end{lemma}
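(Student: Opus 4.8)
The plan is to realize $R \circ S$ as the image of $R$ under the right-composition map and to exploit that this map is continuous between the two seminorm topologies. Concretely, consider the linear map $\Phi \colon {\cal L}(F_1;E) \longrightarrow {\cal L}(F_2;E)$ defined by $\Phi(W) = W \circ S$. The whole argument rests on the elementary identity
$$\|W \circ S\|_A = \sup_{x \in A}\|W(S(x))\| = \sup_{y \in S(A)}\|W(y)\| = \|W\|_{S(A)},$$
valid for every $W \in {\cal L}(F_1;E)$ and every bounded subset $A \subseteq F_2$.

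First I would verify that $\Phi$ is continuous from $({\cal L}(F_1;E), \tau_1)$ to $({\cal L}(F_2;E), \tau_2)$. Since $\tau_2$ is generated by the seminorms $\|\cdot\|_A$ with $A \in {\cal A}_2$, it suffices to dominate each $\|\Phi(W)\|_A$ by a $\tau_1$-seminorm. By the displayed identity, $\|\Phi(W)\|_A = \|W\|_{S(A)}$, and the hypothesis $S(A) \in {\cal A}_1$ guarantees that $\|\cdot\|_{S(A)}$ is exactly one of the seminorms defining $\tau_1$. Thus $\Phi$ pulls each generating $\tau_2$-seminorm back to a generating $\tau_1$-seminorm, so it is continuous.

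Second, since a continuous map sends the closure of a set into the closure of its image, the hypothesis $R \in \overline{{\cal I}(F_1;E)}^{\tau_1}$ yields $\Phi(R) = R \circ S \in \overline{\Phi({\cal I}(F_1;E))}^{\tau_2}$. The ideal property of $\cal I$ gives $T_0 \circ S \in {\cal I}(F_2;E)$ for every $T_0 \in {\cal I}(F_1;E)$, hence $\Phi({\cal I}(F_1;E)) \subseteq {\cal I}(F_2;E)$, and therefore $R \circ S \in \overline{{\cal I}(F_2;E)}^{\tau_2}$, as desired.

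If one prefers to avoid the language of continuity, the same argument can be written out directly: given a basic $\tau_2$-neighborhood of $R \circ S$ determined by finitely many sets $A_1, \ldots, A_k \in {\cal A}_2$ and $\varepsilon > 0$, the sets $S(A_1), \ldots, S(A_k)$ all lie in ${\cal A}_1$, so $R$ being in the $\tau_1$-closure of ${\cal I}(F_1;E)$ produces some $T_0 \in {\cal I}(F_1;E)$ with $\|R - T_0\|_{S(A_j)} < \varepsilon$ for every $j$; then $T_0 \circ S \in {\cal I}(F_2;E)$ and $\|R \circ S - T_0 \circ S\|_{A_j} = \|R - T_0\|_{S(A_j)} < \varepsilon$ for all $j$. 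I expect no genuine obstacle here: the only point requiring a little care is that a basic neighborhood in a seminorm topology may involve several sets at once, which is harmlessly absorbed by applying the closure hypothesis for $R$ to the corresponding finite family in ${\cal A}_1$.
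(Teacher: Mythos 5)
Your proof is correct and takes essentially the same route as the paper's: the entire argument in both cases is the identity $\|W\circ S\|_A=\|W\|_{S(A)}$ combined with the ideal property of $\mathcal{I}$, which the paper applies directly for a single $A\in\mathcal{A}_2$ and $\varepsilon>0$ to produce $T\in\mathcal{I}(F_1;E)$ with $\|T\circ S-R\circ S\|_A=\|T-R\|_{S(A)}<\varepsilon$. Your continuity-of-$\Phi$ packaging is a cosmetic restatement of that computation, and your explicit treatment of basic neighborhoods determined by finitely many sets of $\mathcal{A}_2$ is a harmless (indeed slightly more careful) refinement of the paper's one-seminorm-at-a-time argument.
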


\begin{proof} It is clear that $\tau_i$ is the topology of uniform convergence on the sets belonging to ${\cal A}_i$. Let $\varepsilon>0$ and $A \in {\cal A}_2$ be given. By assumption we have $S(A) \in {\cal A}_1$ and $R\in
\overline{\mathcal{I}(F_1;E)}^{\tau_1}$, so there exists an
operator $T\in {\cal I}(F_1;E)$ such that
$$\|T\circ S-R\circ S\|_{A} = \|T-R\|_{S(A)}<\varepsilon.$$
Since $T\circ S\in
{\cal I}(F_2;E)$ it follows that $R\circ S\in \overline{{\cal I}(F_2;E)}^{\tau_2}$.
\end{proof}


\begin{definition}\rm \cite[p.\,962]{aronruedaprims} Let $\cal I$ be an operator ideal. An operator $T \in {\cal L}(E;F)$ is said to be {\it $\cal I$-bounded} if for every $x\in E$
there exists a neighborhood $V_x$ of $x$ such that $T(V_x)\in
C_{\cal I}(F)$. The set of all $\cal I$-bounded linear operators from $E$ to $F$ is denote by $\mathcal{L}_{\cal I}(E;F)$.
\end{definition}

It is well known that (see \cite{aronruedaprims}):\\
$\bullet$ $\mathcal{L}_{\cal I}$ is an operator ideal.\\
$\bullet$ $T \in {\cal L}_{\cal I}(E;F)$ if and only if $T(B_E)\in C_{\cal
I}(F)$.\\
$\bullet$ $\mathcal{I}(E;F)\subseteq \mathcal{L}_{\cal I}(
E;F).$

Some of the implications of Delgado, Oja, Pi\~neiro and Serrano \cite[Theorem 2.1]{delgadooja} hold true in a rather general context:

\begin{proposition}\label{prop00} Let $E$ be a Banach space and let
 ${\cal I}_1,{\cal I}_2,{\cal I}_3, {\cal J}_1 $ and ${\cal J}_2 $ be operator ideals such that  ${\cal J}_2 \subseteq {\cal J}_1 $
  and ${\cal I}_1 \subseteq {\cal I}_3 \cap \left({\cal I}_1 \circ \mathcal{L}_{\mathcal{J}_2}\right)$.
  Consider the following conditions:\\
{\rm (a)} ${\rm id}_E \in \overline{\mathcal{I}_2(E;E)}^{\tau_{C_{\mathcal {J}_1}}}$.\\
{\rm (b)} $E$ has the $({\cal I}_1, {\cal I}_2, \|\cdot\|)$-AP.\\
{\rm (c)} $E$ has the $({\cal
I}_1, {\cal I}_2,\tau_{C_{\mathcal {J}_2}} )$-AP.\\
{\rm (d)} $E$ has the $({\cal
I}_3, {\cal I}_2,\tau_{C_{\mathcal {J}_2}} )$-AP.\\
{\rm (e)} $E$ has the $({\cal
L}, {\cal I}_2,\tau_{C_{\mathcal {J}_2}} )$-AP.\\
Then ${\rm (a)} \Longrightarrow {\rm (e)} \Longrightarrow {\rm (d)} \Longrightarrow {\rm (c)}\Longleftrightarrow {\rm (b)}$.
\end{proposition}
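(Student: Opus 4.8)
The plan is to prove the five implications one at a time, observing that four of them are soft arguments (monotonicity of ideals and comparison of topologies) and that the entire weight of the statement rests on the single nontrivial step $(c)\Rightarrow(b)$.

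For $(a)\Rightarrow(e)$ I would use only the hypothesis ${\cal J}_2\subseteq{\cal J}_1$. This inclusion gives $C_{{\cal J}_2}(E)\subseteq C_{{\cal J}_1}(E)$ for every Banach space $E$, since a set contained in $u(B_F)$ with $u\in{\cal J}_2(F;E)$ is witnessed by that same $u$ regarded as an element of ${\cal J}_1(F;E)$. A smaller defining family yields a coarser topology, so $\tau_{C_{{\cal J}_2}}\subseteq\tau_{C_{{\cal J}_1}}$ and therefore $\overline{{\cal I}_2(E;E)}^{\,\tau_{C_{{\cal J}_1}}}\subseteq\overline{{\cal I}_2(E;E)}^{\,\tau_{C_{{\cal J}_2}}}$. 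Now $(a)$ places ${\rm id}_E$ in the left-hand closure, while $(e)$ is, by the characterization of the $({\cal L},{\cal I}_2,\tau)$-AP, exactly the assertion ${\rm id}_E\in\overline{{\cal I}_2(E;E)}^{\,\tau_{C_{{\cal J}_2}}}$; hence $(a)$ implies $(e)$.

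The implications $(e)\Rightarrow(d)$ and $(d)\Rightarrow(c)$ are then pure monotonicity in the first coordinate of the AP. As ${\cal I}_3\subseteq{\cal L}$, the $({\cal L},{\cal I}_2,\tau_{C_{{\cal J}_2}})$-AP restricts to ${\cal I}_3(F;E)\subseteq{\cal L}(F;E)\subseteq\overline{{\cal I}_2(F;E)}^{\,\tau_{C_{{\cal J}_2}}}$, which is $(d)$; and since the hypothesis forces ${\cal I}_1\subseteq{\cal I}_3$, the same restriction from ${\cal I}_3$ down to ${\cal I}_1$ yields $(c)$. For the equivalence $(c)\Leftrightarrow(b)$, the direction $(b)\Rightarrow(c)$ is immediate from $\tau_{C_{{\cal J}_2}}\subseteq\|\cdot\|$ (Proposition \ref{propger}), which enlarges closures: $\overline{{\cal I}_2(F;E)}^{\,\|\cdot\|}\subseteq\overline{{\cal I}_2(F;E)}^{\,\tau_{C_{{\cal J}_2}}}$.

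The heart of the matter is $(c)\Rightarrow(b)$, where the factorization hypothesis ${\cal I}_1\subseteq{\cal I}_1\circ{\cal L}_{{\cal J}_2}$ and Lemma \ref{lemmaaderencia} come into play. Fix a Banach space $F$ and $u\in{\cal I}_1(F;E)$. The inclusion ${\cal I}_1\subseteq{\cal I}_1\circ{\cal L}_{{\cal J}_2}$ lets me factor $u=A\circ B$ with $B\in{\cal L}_{{\cal J}_2}(F;G)$ and $A\in{\cal I}_1(G;E)$ for some Banach space $G$. By $(c)$ we have $A\in\overline{{\cal I}_2(G;E)}^{\,\tau_{C_{{\cal J}_2}}}$. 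Now the known characterization $B\in{\cal L}_{{\cal J}_2}(F;G)\Leftrightarrow B(B_F)\in C_{{\cal J}_2}(G)$ says precisely that $B$ carries the single set $B_F$ — which generates the norm topology on ${\cal L}(F;E)$ — into the defining family $C_{{\cal J}_2}(G)$ of $\tau_{C_{{\cal J}_2}}$ on ${\cal L}(G;E)$. This is exactly the compatibility demanded by Lemma \ref{lemmaaderencia}, applied with $R=A$, $S=B$, ${\cal A}_2=\{B_F\}$ (generating $\|\cdot\|$) and ${\cal A}_1=C_{{\cal J}_2}(G)$ (generating $\tau_{C_{{\cal J}_2}}$); it returns $u=A\circ B\in\overline{{\cal I}_2(F;E)}^{\,\|\cdot\|}$. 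As $F$ and $u$ are arbitrary, this is $(b)$. I expect the only real care here to be bookkeeping: getting the order of the composition in the product ${\cal I}_1\circ{\cal L}_{{\cal J}_2}$ right so that the ${\cal L}_{{\cal J}_2}$-factor is the inner map $B$, and confirming $B(B_F)\in C_{{\cal J}_2}(G)$ so that uniform approximation of $A$ on $B(B_F)$ converts into genuine norm approximation of $A\circ B$.
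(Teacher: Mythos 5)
Your proposal is correct, and on the decisive implication (c) $\Rightarrow$ (b) it coincides with the paper's proof: factor $T = R\circ S$ through ${\cal I}_1 \circ {\cal L}_{{\cal J}_2}$, apply (c) to the ${\cal I}_1$-factor, and convert $\tau_{C_{{\cal J}_2}}$-approximation of $R$ into norm approximation of $R\circ S$ via Lemma \ref{lemmaaderencia}; your instantiation ${\cal A}_2 = \{B_F\}$ is a harmless (in fact slightly cleaner) variant of the paper's choice of all bounded subsets of $F$, since uniform convergence on $B_F$ already gives the norm topology and the needed compatibility is exactly $S(B_F) \in C_{{\cal J}_2}(G)$, i.e., the cited characterization of ${\cal L}_{{\cal J}_2}$. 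The one place you genuinely diverge is (a) $\Rightarrow$ (e): the paper notes that, because ${\cal J}_2 \subseteq {\cal J}_1$, every $T \in {\cal L}(F;E)$ maps ${\cal J}_2$-bounded sets to ${\cal J}_1$-bounded sets, and applies Lemma \ref{lemmaaderencia} to $T = {\rm id}_E \circ T$ directly, obtaining (e) for each $F$ at once; you instead stay on the single space ${\cal L}(E;E)$, observe $C_{{\cal J}_2}(E) \subseteq C_{{\cal J}_1}(E)$, hence $\tau_{C_{{\cal J}_2}} \subseteq \tau_{C_{{\cal J}_1}}$ and $\overline{{\cal I}_2(E;E)}^{\,\tau_{C_{{\cal J}_1}}} \subseteq \overline{{\cal I}_2(E;E)}^{\,\tau_{C_{{\cal J}_2}}}$, and then invoke the identity-operator characterization of the $({\cal L}, {\cal I}_2, \tau_{C_{{\cal J}_2}})$-AP. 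This route is legitimate, but note that it silently uses that $\tau_{C_{{\cal J}_2}}$ is an ideal topology (Example \ref{exideal}(a) and Proposition \ref{propger}), since the characterization you invoke rests on the ideal property of $\overline{{\cal I}_2}^{\,\tau_{C_{{\cal J}_2}}}$ -- the same composition estimate that powers Lemma \ref{lemmaaderencia} -- so the two arguments differ in packaging rather than substance. The remaining implications ((e) $\Rightarrow$ (d) $\Rightarrow$ (c) by monotonicity, with ${\cal I}_1 \subseteq {\cal I}_3$ extracted from the standing hypothesis, and (b) $\Rightarrow$ (c) from $\tau_{C_{{\cal J}_2}} \subseteq \|\cdot\|$) agree with the paper's.
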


\begin{proof}
(a) $\Longrightarrow$ (e) Let $F$ be a Banach space and $T\in \mathcal
{L}(F;E)$. Since  ${\cal J}_2 \subseteq {\cal J}_1 $, $T$ maps $\mathcal{J}_2$-bounded sets to
 $\mathcal{J}_1$-bounded sets. By (a) and  Lemma
\ref{lemmaaderencia} we have $T = {\rm id}_E \circ T \in
\overline{\mathcal{I}_2(F;E)}^{\tau_{C_{\mathcal {J}_2}}} $. Therefore $E$ has the $({\cal L}, {\cal
I}_2,\tau_{C_{\mathcal {J}_2}} )$-AP. \\
(e) $\Longrightarrow $ (d) $\Longrightarrow $ (c)  are obvious.\\
(c) $\Longrightarrow $(b) Let $F$ be a Banach space and $T\in \mathcal
{I}_1(F;E)$. There are a Banach space $G$ and operators $R\in {\cal I}_1(G;E)$ and $S\in \mathcal{L}_{\mathcal{J}_2}(F;G)$ such that  $ T= R\circ S$. Then $R \in \overline{\mathcal{I}_2(G;E)}^{\tau_{C_{\mathcal {J}_2}}}$ and $S$ maps bounded sets to $\mathcal{J}_2$-bounded sets. By Lemma
\ref{lemmaaderencia} we have $T = R \circ S\in \overline{\mathcal{I}_2(F;E)}^{\|\cdot\|}$, proving that $E$ has the $({\cal I}_1, {\cal
I}_2,\|\cdot \| )$-AP.\\
(b) $\Longrightarrow $(c) From Example \ref{exideal} and Proposition \ref{propger} we have $\tau_{C_{{\cal J}_2}}\subseteq \|\cdot\| $. Thus the $({\cal I}_1, {\cal I}_2,
\|\cdot\|)$-AP implies the $({\cal I}_1, {\cal I}_2,\tau_{C_{{\cal J}_2}}
)$-AP .
\end{proof}

\begin{remark}\rm Note that in Proposition \ref{prop00} no condition has been imposed on the operator ideal ${\cal I}_2$.
\end{remark}

The aim now is to show that, under some additional assumptions, the conditions (a)-(d) above are all equivalent. To accomplish this task we take advantage of the quantitative change Lima, Nygaard and Oja \cite{OjaIsrael} made in the classical Davis, Figiel, Johnson and Pe{\l}czy\'nski classical factorization scheme \cite{DFJP}, which we describe next.

Let $E$ be a Banach space, let $K$ be a closed absolutely convex
subset of its unit ball $B_E$ and let $a>1$. For each $n \in \mathbb{N}$
put $B_n=a^{n/2}K+a^{-n/2}B_E$. As $B_n$ is absolutely convex and absorbent, the gauge (Minkowski functional) $\|\cdot\|_n$ of $B_n$,
$$\|x \|_n=\inf \{\lambda : x\in \lambda B_n\},$$  is a seminorm on
$E$ that is equivalent to the original norm $\|\cdot\|$ on $E$. For $x \in E$ define $\|x\|_K=\left(\sum_{n=1}^{\infty} \|x\|_n^2 \right)^{1/2}$ and let the subspace $E_K=\{x\in E: \|x\|_K<\infty\}$ of $E$ be endowed with the norm $\| \cdot\|_K$. The function $$f\colon (1, \infty)\longrightarrow \mathbb{R}~,~f(a)=\sum_{n=1}^{\infty } \dfrac{a^n}{(a^n+1)^2},$$
is continuous, strictly decreasing, $\displaystyle\lim_{a\rightarrow
1^+}f(a)=\infty $ and $\displaystyle\lim_{a\rightarrow \infty}
f(a)=0 $. Hence, there exists exactly one number $\hat{a}\in (1,
\infty)$ such that $f(\hat{a})=1$. Let $C_K=\{x\in E:
\|x\|_K\leq 1\}$ and let $J_K$ denote the identity embedding from $E_K$
to $E$. Replacing $a$ with $\hat a$ in \cite[Lemma 1.1]{OjaIsrael}, we get

\begin{lemma}\label{lnofl} {\rm \cite[Lemma 1.1]{OjaIsrael}}
Let $E$, $K$, $C_K$, $E_K$ and $J_K$ be as
above.  Then:\\
{\rm (a)} $K\subseteq C_K \subseteq B_E$. \\
{\rm (b)} $E_K$ is a Banach space with closed unit ball $C_K$ and
$J_K\in \mathcal{L}(E_K;E)$ with
$\|J_K\|\leq 1$.\\
{\rm (c)} $J_K''$ is injective. \\
{\rm (d)} $J_K(C_K)=C_K$.
\end{lemma}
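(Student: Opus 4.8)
The plan is to treat this as the Davis--Figiel--Johnson--Pe{\l}czy\'nski interpolation construction, the only novelty being that the parameter $\hat a$ (defined by $f(\hat a)=1$) is chosen precisely so that the relevant embedding constants come out to exactly $1$. Everything rests on a single two-sided estimate for the gauges $\|\cdot\|_n$. On the one hand, since $K\subseteq B_E$ we have $B_n=a^{n/2}K+a^{-n/2}B_E\subseteq (a^{n/2}+a^{-n/2})B_E$, and taking gauges reverses this inclusion, giving $\|x\|_n\ge \|x\|/(a^{n/2}+a^{-n/2})$ for every $x\in E$. On the other hand, for $x\in K$ one checks directly that $x\in\lambda B_n$ with $\lambda=(a^{n/2}+a^{-n/2})^{-1}$, by writing $x=\lambda a^{n/2}x+(1-\lambda a^{n/2})x$ and observing that the first summand lies in $\lambda a^{n/2}K$ while $\|(1-\lambda a^{n/2})x\|\le \lambda a^{-n/2}$; hence $\|x\|_n\le (a^{n/2}+a^{-n/2})^{-1}$ for $x\in K$. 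Since $(a^{n/2}+a^{-n/2})^{-2}=a^n/(a^n+1)^2$, both estimates feed the same series, and $\sum_n (a^{n/2}+a^{-n/2})^{-2}=f(a)$ equals $1$ at $a=\hat a$.

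Then (a), (b), (d) follow quickly. Squaring and summing the lower estimate gives $\|x\|_K^2\ge f(\hat a)\|x\|^2=\|x\|^2$, i.e. $\|x\|\le\|x\|_K$, which yields $C_K\subseteq B_E$ and $\|J_K\|\le 1$. Squaring and summing the upper estimate over $x\in K$ gives $\|x\|_K^2\le f(\hat a)=1$, i.e. $K\subseteq C_K$, completing (a). For (b), $\|\cdot\|_K$ is a norm: homogeneity is clear, the triangle inequality is Minkowski's inequality for the $\ell_2$-sum of the seminorms $\|\cdot\|_n$, and $\|x\|_K=0$ forces $\|x\|=0$ by the estimate just proved; completeness of $(E_K,\|\cdot\|_K)$ is the routine Fatou-type argument (a $\|\cdot\|_K$-Cauchy sequence is $\|\cdot\|$-Cauchy, hence $\|\cdot\|$-convergent in $E$, and continuity of each $\|\cdot\|_n$ places the limit in $E_K$ with $\|\cdot\|_K$-convergence). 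The closed unit ball of $E_K$ is $C_K$ by definition, so $J_K\in\mathcal L(E_K;E)$ with $\|J_K\|\le1$. Finally (d) is immediate: $J_K$ is the inclusion map and, by (b), $C_K$ is exactly the unit ball of $E_K$ sitting inside $E$, so $J_K(C_K)=C_K$.

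The real work is (c), and I would prove it through the $\ell_2$-sum model. Let $E_n$ denote $E$ equipped with the equivalent norm $\|\cdot\|_n$ and put $Z=\left(\bigoplus_n E_n\right)_{\ell_2}$; by the definition of $\|\cdot\|_K$ the diagonal map $\Delta\colon E_K\to Z$, $\Delta(x)=(x)_n$, is an isometry onto a closed subspace. Because $\ell_2$ is reflexive one has $Z''=\left(\bigoplus_n E_n''\right)_{\ell_2}$, and a computation of adjoints shows $\Delta''(z)=\left(i_n''(z)\right)_n$, where $i_n\colon E_K\to E_n$ is the (norm-one) inclusion. The crucial observation is that all the $i_n$ and $J_K$ are the same set-theoretic inclusion into copies of $E$ carrying equivalent norms, so they share the single adjoint $i_n'=J_K'\colon E'\to E_K'$, $\varphi\mapsto\varphi|_{E_K}$; hence $i_n''=J_K''$ for every $n$. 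Consequently $\|z\|_{E_K''}^2=\|\Delta''z\|_{Z''}^2=\sum_n\|J_K''z\|_{E_n''}^2$, so $J_K''z=0$ in $E''$ forces $\|z\|_{E_K''}=0$ and therefore $z=0$. This is the step I expect to be the main obstacle: making the identifications $Z''=\left(\bigoplus_n E_n''\right)_{\ell_2}$, $\Delta''(z)=(i_n''z)_n$ and $i_n''=J_K''$ fully rigorous is where all the care is needed, whereas the constant-tuning behind (a), (b), (d) is essentially bookkeeping once $f(\hat a)=1$ is in place.
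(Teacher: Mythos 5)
Your proof is correct and is exactly the classical Davis--Figiel--Johnson--Pe{\l}czy\'nski interpolation argument with the Lima--Nygaard--Oja normalization $f(\hat a)=1$; the paper itself offers no proof, quoting the lemma verbatim from \cite[Lemma 1.1]{OjaIsrael}, whose proof is the very argument you reconstruct (two-sided gauge estimate $(a^{n/2}+a^{-n/2})^{-1}$-tuned so the series sums to $1$, then the diagonal embedding into $Z=\left(\bigoplus_n E_n\right)_{\ell_2}$ for the injectivity of $J_K''$). One cosmetic point: the identification $Z''=\left(\bigoplus_n E_n''\right)_{\ell_2}$ follows from applying the duality $\left(\left(\bigoplus_n X_n\right)_{\ell_2}\right)'=\left(\bigoplus_n X_n'\right)_{\ell_2}$ twice, not from the reflexivity of $\ell_2$, though the conclusion you use is correct.
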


%

The key result is the following:

\begin{theorem} {\rm (Lima-Nygaard-Oja Factorization Theorem \cite[Theorem 2.2]{OjaIsrael})} Suppose
$T\in \mathcal{L}(F;E)$. Let $K=\dfrac{1}{\|T\|}\overline{T(B_F)}$
and let $T_K\in\mathcal{L}(F; E_K)$ be defined by $T_K(y)=T(y), y\in
F$. Then $T=J_K\circ T_K$.
\end{theorem}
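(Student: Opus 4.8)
The plan is to verify first that $K$ is admissible for the construction preceding Lemma \ref{lnofl}, then to read off that $T_K$ actually lands in $E_K$ and is bounded there, and finally to observe that the factorization is a tautology once the codomain has been identified correctly. I would dispatch the degenerate case $T = 0$ at the outset (it is trivial for any admissible $K$) and assume $T \neq 0$, so that $\frac{1}{\|T\|}\overline{T(B_F)}$ makes sense.

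First I would check that $K = \frac{1}{\|T\|}\overline{T(B_F)}$ is a closed, absolutely convex subset of $B_E$, so that the whole machinery ($\|\cdot\|_n$, $\|\cdot\|_K$, $E_K$, $C_K$, $J_K$) applies to it. Since $B_F$ is absolutely convex and $T$ is linear, $T(B_F)$ is absolutely convex; its norm closure $\overline{T(B_F)}$ is then absolutely convex and closed, and scaling by the positive number $1/\|T\|$ preserves both properties. The inclusion $K \subseteq B_E$ follows from $\|T(y)\| \leq \|T\|\,\|y\| \leq \|T\|$ for $y \in B_F$, which gives $\overline{T(B_F)} \subseteq \|T\|\,B_E$. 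At this point all the objects in the statement are legitimately defined.

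The substantive step is to show that $T_K \colon F \to E_K$, $T_K(y) = T(y)$, is well defined and bounded. The crux is the chain $T(B_F) \subseteq \overline{T(B_F)} = \|T\|\,K \subseteq \|T\|\,C_K$, whose last inclusion is precisely $K \subseteq C_K$ from Lemma \ref{lnofl}(a). Since $C_K$ is the closed unit ball of $E_K$ by Lemma \ref{lnofl}(b), this says that for every $y \in B_F$ one has $T(y) \in E_K$ with $\|T(y)\|_K \leq \|T\|$. By homogeneity of $\|\cdot\|_K$ it follows that $T(y) \in E_K$ for all $y \in F$ and $\|T_K(y)\|_K \leq \|T\|\,\|y\|$, so $T_K \in \mathcal{L}(F;E_K)$ with $\|T_K\| \leq \|T\|$.

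Finally, since $J_K$ is the identity embedding of $E_K$ into $E$, for every $y \in F$ we get $(J_K \circ T_K)(y) = J_K(T(y)) = T(y)$, which is the asserted identity $T = J_K \circ T_K$. I do not expect a genuine obstacle: the whole argument is bookkeeping around the single inclusion $K \subseteq C_K$ furnished by Lemma \ref{lnofl}(a). The only points demanding care are keeping the two norms straight (the ambient norm on $E$ versus $\|\cdot\|_K$ on $E_K$, with $J_K$ harmlessly of norm $\leq 1$) and disposing of the case $T = 0$ before writing $1/\|T\|$.
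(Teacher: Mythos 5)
Your proof is correct and complete; note that the paper itself gives no proof of this statement, quoting it directly from Lima--Nygaard--Oja \cite[Theorem 2.2]{OjaIsrael}. Your argument --- verify that $K$ is an admissible (closed, absolutely convex) subset of $B_E$, then use the chain $T(B_F)\subseteq \|T\|K\subseteq \|T\|C_K$ coming from Lemma \ref{lnofl}(a),(b) to see that $T_K$ is well defined and bounded, after which $T=J_K\circ T_K$ is immediate --- is precisely the standard argument of the original source, so there is nothing to add.
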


Henceforth the expression $T=J_K\circ T_K$ above shall be referred to as {\it the LNO factorization of $T$}.

\begin{definition}\rm An operator ideal $\cal I$ has the {\it Grothendieck property} if whenever $A$ is a bounded subset of a Banach space $E$ such that for every
$\varepsilon>0$ there is a set $A_{\varepsilon}\in C_{\cal I}(E)$ with $A\subseteq A_{\varepsilon}+\varepsilon B_E$, it holds that $A\in C_{\cal
I}(E)$.
\end{definition}

\begin{example}\label{exGG}\rm Gonz\'alez and Guti\'errez \cite[Proposition 3(c)]{GG} proved that any
closed surjective operator ideal has the Grothendieck property. Lists of closed surjective operator ideals can be found in \cite{GG} and \cite{DJP}.
\end{example}

\begin{proposition}\label{lemmajk} Let $T=J_K\circ T_K$ be the LNO factorization of the operator $T\in \mathcal{L}(F;E)$. If the operator ideal $\cal
I$ has the Grothendieck property, then $T\in \mathcal{L}_{\cal
I}(F;E)$ if and only if $J_K\in \mathcal{L}_{\mathcal{I}}(E_K;E)$.
\end{proposition}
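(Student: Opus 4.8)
The plan is to reduce the whole equivalence to two membership statements in $C_{\cal I}(E)$. By the characterization recalled right after the definition of $\mathcal{L}_{\cal I}$, one has $T\in\mathcal{L}_{\cal I}(F;E)\Longleftrightarrow T(B_F)\in C_{\cal I}(E)$, while for $J_K$, using $J_K(C_K)=C_K$ from Lemma~\ref{lnofl}(d), the same characterization gives $J_K\in\mathcal{L}_{\cal I}(E_K;E)\Longleftrightarrow C_K\in C_{\cal I}(E)$, where $C_K$ is the closed unit ball of $E_K$. Hence the statement to prove becomes
$$T(B_F)\in C_{\cal I}(E)\Longleftrightarrow C_K\in C_{\cal I}(E).$$
Throughout I would freely use that $C_{\cal I}(E)$ is hereditary (any subset of an $\cal I$-bounded set is $\cal I$-bounded) and stable under scalar multiples, both being immediate from the definition of $C_{\cal I}$ together with the ideal property of $\cal I$.

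For the implication ``$C_K\in C_{\cal I}(E)\Rightarrow T(B_F)\in C_{\cal I}(E)$'', which requires no Grothendieck property, I would merely chain inclusions. Since $\|T\|K=\overline{T(B_F)}$ by definition of $K$ and $K\subseteq C_K$ by Lemma~\ref{lnofl}(a), we get $T(B_F)\subseteq\|T\|K\subseteq\|T\|\,C_K$; as $\|T\|\,C_K\in C_{\cal I}(E)$ and $C_{\cal I}(E)$ is hereditary, this forces $T(B_F)\in C_{\cal I}(E)$.

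The reverse implication is the substantial one, and it is here that the Grothendieck property intervenes, in fact twice. First I would upgrade $\cal I$-boundedness of $T(B_F)$ to that of its closure: because $\overline{T(B_F)}\subseteq T(B_F)+\varepsilon B_E$ for every $\varepsilon>0$, setting $A_\varepsilon:=T(B_F)\in C_{\cal I}(E)$ the Grothendieck property yields $\overline{T(B_F)}\in C_{\cal I}(E)$, whence $K=\frac{1}{\|T\|}\overline{T(B_F)}\in C_{\cal I}(E)$. Second, I would read off the DFJP/LNO gauge structure: for $x\in C_K$ one has $\|x\|_n\le\|x\|_K\le 1$ for every $n$, so $x\in(1+\delta)B_n=(1+\delta)\big(\hat{a}^{\,n/2}K+\hat{a}^{-n/2}B_E\big)$ for all $\delta>0$. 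Given $\varepsilon>0$, choosing $n$ with $2\hat{a}^{-n/2}<\varepsilon$ and taking $\delta=1$ produces $C_K\subseteq 2\hat{a}^{\,n/2}K+\varepsilon B_E$ with $2\hat{a}^{\,n/2}K\in C_{\cal I}(E)$; a second application of the Grothendieck property then gives $C_K\in C_{\cal I}(E)$, i.e.\ $J_K\in\mathcal{L}_{\cal I}(E_K;E)$.

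I expect the main obstacle to be the careful handling of this last step: translating ``$\|x\|_K\le 1$'' into the approximate membership $C_K\subseteq(1+\delta)(\hat{a}^{\,n/2}K+\hat{a}^{-n/2}B_E)$ (the factor $1+\delta$ being forced because the gauge is an infimum, and some care being needed about closures of the sets $B_n$), and then recognizing that the family $\{\hat{a}^{\,n/2}K\}_n$ supplies exactly the ``approximation up to $\varepsilon B_E$'' that the Grothendieck property is tailored to absorb. Once one notices that both the passage to $\overline{T(B_F)}$ and the passage from $K$ to $C_K$ are manifestations of the same Grothendieck-property argument, the remaining details are routine.
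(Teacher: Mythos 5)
Your proof is correct and takes essentially the same route as the paper's: both directions reduce to deciding whether $C_K\in C_{\cal I}(E)$ via $J_K(B_{E_K})=J_K(C_K)=C_K$, and the substantial implication applies the Grothendieck property exactly twice, first to upgrade $T(B_F)$ to $\overline{T(B_F)}$ (hence $K$) and then to pass from $K$ to $C_K$ through the DFJP/LNO inclusions. The only cosmetic differences are that the paper simply cites the inclusion $C_K\subseteq \hat a^{n/2}K+\hat a^{-n/2}B_E$ from Lima--Nygaard--Oja, so your factor-$2$ gauge workaround is unnecessary (though harmless, since the Grothendieck property only needs some $A_\varepsilon\in C_{\cal I}(E)$), and it dispatches the easy converse by the ideal property of $\mathcal{L}_{\cal I}$ applied to $T=J_K\circ T_K$ rather than by your heredity-of-$C_{\cal I}$ argument.
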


\begin{proof}
 Assume that $T\in \mathcal{L}_{\cal I}(F;E)$. In this case we have $T(B_F)\in
C_{\mathcal{I}}(E )$. As, for all $\varepsilon>0$,
$\overline{T(B_F)}\subseteq T(B_F)+\varepsilon B_F$ and $\cal I$ has the
Grothendieck property, we have that $\overline{T(B_F)} \in C_{\mathcal{I}}(E )$, hence $K = \dfrac{1}{\|T\|}\overline{T(B_F)}\in C_{\mathcal{I}}(E )$. Given $\varepsilon > 0$, since $C_K\subseteq a^{n/2}K+a^{-n/2}B_E$ for
every $n$ (see \cite[p.\,331]{OjaIsrael}), choosing $n$ such that $a^{-n/2} < \varepsilon$ and putting $A_\varepsilon = a^{n/2}K \in C_{\mathcal{I}}(E )$, we have $C_K\subseteq A_\varepsilon +\varepsilon B_E$. The Grothendieck property of $\cal I$ gives $C_K \in C_{\mathcal{I}}(E )$. By items (b) and (d) of Lemma \ref{lnofl} it follows that
$$J_K(B_{E_K}) = J_K(C_K)=C_K \in C_{\mathcal{I}}(E ),$$ which proves that $J_K \in
  \mathcal{L}_{\mathcal{I}}(E_K;E)$. The converse follows immediately from the ideal property.
\end{proof}

\begin{corollary}Let $T=J_K\circ T_K$ be the LNO factorization of the operator $T\in \mathcal{L}(F;E)$. If the operator ideal $\cal
I$ is surjective and has the Grothendieck property (in particular, if $\cal I$ is closed and surjective), then $T\in \mathcal{L}_{\cal
I}(F;E)$ if and only if $J_K\in \mathcal{L}_{\mathcal{I}}(E_K;E)$.
\end{corollary}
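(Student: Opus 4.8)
The plan is to derive this corollary directly from Proposition \ref{lemmajk}, whose conclusion is precisely the asserted equivalence, but stated under the single hypothesis that $\cal I$ has the Grothendieck property. Since the present statement assumes that $\cal I$ is surjective \emph{and} has the Grothendieck property, that property is at our disposal, so applying Proposition \ref{lemmajk} to the LNO factorization $T = J_K \circ T_K$ immediately yields that $T \in \mathcal{L}_{\cal I}(F;E)$ if and only if $J_K \in \mathcal{L}_{\cal I}(E_K;E)$. In particular, the surjectivity hypothesis plays no role in the equivalence itself; I would keep it in the statement only because it is exactly what is needed to make the parenthetical specialization below read cleanly.

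For the parenthetical claim, I would invoke Example \ref{exGG}. The result of Gonz\'alez and Guti\'errez recorded there guarantees that every closed surjective operator ideal has the Grothendieck property. Hence, if $\cal I$ is closed and surjective, then $\cal I$ is in particular surjective and enjoys the Grothendieck property, which places us exactly in the situation of the first part of the argument; applying Proposition \ref{lemmajk} again closes the case.

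I do not expect any genuine obstacle here. All of the analytic content, namely controlling the set $C_K$ by the dilates $a^{n/2}K$ and transporting the Grothendieck property through the factorization to conclude $J_K(B_{E_K}) = C_K \in C_{\cal I}(E)$, has already been carried out in the proof of Proposition \ref{lemmajk}. The corollary is therefore purely a matter of recording the two convenient sufficient conditions, surjective together with the Grothendieck property, or the stronger and more readily verifiable pair closed together with surjective, under which that proposition becomes applicable.
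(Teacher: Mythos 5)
Your proof is valid for the statement as literally printed: the asserted equivalence is verbatim the conclusion of Proposition \ref{lemmajk}, which needs only the Grothendieck property, and Example \ref{exGG} supplies that property in the closed-and-surjective case, so both readings of the hypothesis reduce to a single application of Proposition \ref{lemmajk}. Where you diverge from the paper is in the role of surjectivity, which you have misdiagnosed. The paper's proof consists essentially of the citation \cite[Proposition 3.2]{aronruedaprims}, which says that $\mathcal{L}_{\cal I} = \mathcal{I}$ if and only if $\cal I$ is surjective; combined with Proposition \ref{lemmajk}, this turns the corollary into the statement actually intended, namely that $T \in \mathcal{I}(F;E)$ if and only if $J_K \in \mathcal{I}(E_K;E)$ --- membership in the ideal itself, not merely in the larger class $\mathcal{L}_{\cal I}$ of $\mathcal{I}$-bounded operators. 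So surjectivity is not, as you suggest, retained ``only to make the parenthetical specialization read cleanly'': it is exactly what upgrades the $\mathcal{L}_{\cal I}$-equivalence of Proposition \ref{lemmajk} to an $\mathcal{I}$-equivalence, which is the whole added content of the corollary and the form in which it is used later (e.g., in Example \ref{ckl}, where $\mathcal{L}_{\cal K} = \mathcal{K}$). Your handling of the \emph{in particular} clause via Example \ref{exGG} coincides with the paper's; what your shorter route loses is the Aron--Rueda identification, without which the corollary is, as you yourself observed, a vacuous restatement of the proposition.
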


\begin{proof} By \cite[Proposition 3.2]{aronruedaprims} we know that $\mathcal{L}_{\cal I} =\mathcal{I}$ if and only if $\cal I$ is surjective. The {\it in particular} part follows from Example \ref{exGG}.
\end{proof}

The next result, which is a variant of Delgado, Oja, Pi\~neiro and Serrano \cite[Theorem 2.1]{delgadooja} and a generalization of Choi, Kim and Lee \cite[Theorem 2.4]{choikimlee} (see Example \ref{ckl}), shows that with additional assumptions the conditions (a)-(d) of Proposition \ref{prop00} are equivalent.


\begin{theorem} \label{theoequiv} Let ${\cal I}, {\cal J}_1, {\cal J}_2$ be operator ideals such that ${\cal J}_1$ has the Grothendieck property, ${\cal I} \supseteq
\mathcal{L}_{\mathcal{J}_1}=\mathcal{L}_{\mathcal{J}_1}\circ
\mathcal{L}_{\mathcal{J}_2}$ and such that operators belonging to $\cal I$ map
$\mathcal{J}_2$-bounded sets to $\mathcal{J}_1$-bounded sets.
 The following statements are equivalent for a Banach space $E$:\\
{\rm (a)} ${\rm id}_E \in \overline{\mathcal{F}(E;E)}^{\tau_{C_{\mathcal {J}_1}}}$.\\
{\rm (b)}  $E$ has the $(\mathcal{L}_{\mathcal{J}_1}, {\mathcal F}, \|\cdot\|)$-AP\\
{\rm (c)} $E$ has the $(\mathcal{L}_{\mathcal{J}_1}, {\cal F},\tau_{C_{\mathcal {J}_2}} )$-AP.\\
{\rm (d)} $E$ has the $({\cal
I}, {\cal F},\tau_{C_{\mathcal {J}_2}} )$-AP.

\end{theorem}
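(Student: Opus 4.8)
The plan is to establish the cycle (a) $\Rightarrow$ (d) $\Rightarrow$ (c) $\Rightarrow$ (b) $\Rightarrow$ (a), so that all four statements become equivalent. The first three arrows are soft and only reuse the mechanism of Lemma \ref{lemmaaderencia} as in Proposition \ref{prop00}; the whole weight of the theorem rests on the single arrow (b) $\Rightarrow$ (a), where the LNO factorization, the Grothendieck property and, crucially, Lemma \ref{lnofl}(c) come into play.

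For (a) $\Rightarrow$ (d): given a Banach space $F$ and $T \in {\cal I}(F;E)$, the standing hypothesis that operators of ${\cal I}$ send ${\cal J}_2$-bounded sets to ${\cal J}_1$-bounded sets is precisely the statement that $T(A) \in C_{{\cal J}_1}(E)$ for every $A \in C_{{\cal J}_2}(F)$. I would then apply Lemma \ref{lemmaaderencia} with $R = {\rm id}_E \in \overline{{\cal F}(E;E)}^{\tau_{C_{{\cal J}_1}}}$ (this is (a)), $S = T$, ${\cal A}_1 = C_{{\cal J}_1}(E)$ and ${\cal A}_2 = C_{{\cal J}_2}(F)$, to obtain $T = {\rm id}_E \circ T \in \overline{{\cal F}(F;E)}^{\tau_{C_{{\cal J}_2}}}$, which is (d). The arrow (d) $\Rightarrow$ (c) is immediate from ${\cal I} \supseteq \mathcal{L}_{\mathcal{J}_1}$, since shrinking the domain ideal only weakens the requirement. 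For (c) $\Rightarrow$ (b) I would repeat the corresponding step of Proposition \ref{prop00}, which uses only the factorization hypothesis (and not ${\cal J}_2 \subseteq {\cal J}_1$): given $T \in \mathcal{L}_{\mathcal{J}_1}(F;E)$, the equality $\mathcal{L}_{\mathcal{J}_1} = \mathcal{L}_{\mathcal{J}_1} \circ \mathcal{L}_{\mathcal{J}_2}$ factors $T = R \circ S$ with $S \in \mathcal{L}_{\mathcal{J}_2}(F;G)$ and $R \in \mathcal{L}_{\mathcal{J}_1}(G;E)$; as $S$ sends bounded sets to ${\cal J}_2$-bounded sets and $R \in \overline{{\cal F}(G;E)}^{\tau_{C_{{\cal J}_2}}}$ by (c), Lemma \ref{lemmaaderencia} gives $T \in \overline{{\cal F}(F;E)}^{\|\cdot\|}$, i.e. (b). None of these three arrows needs the Grothendieck property or the factorization theorem.

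The crux is (b) $\Rightarrow$ (a), which asks me to approximate ${\rm id}_E$, uniformly on an arbitrary $A \in C_{{\cal J}_1}(E)$, by finite-rank operators on $E$. Writing $A \subseteq u(B_G)$ with $u \in {\cal J}_1(G;E) \subseteq \mathcal{L}_{\mathcal{J}_1}(G;E)$ (the case $u = 0$ being trivial, so assume $\|u\| > 0$), I would take the LNO factorization $u = J_K \circ u_K$ with $K = \frac{1}{\|u\|}\overline{u(B_G)}$. Since ${\cal J}_1$ has the Grothendieck property and $u \in \mathcal{L}_{\mathcal{J}_1}$, Proposition \ref{lemmajk} yields $J_K \in \mathcal{L}_{\mathcal{J}_1}(E_K;E)$; hypothesis (b) then provides a finite-rank $V = \sum_{i=1}^m \psi_i \otimes z_i \in {\cal F}(E_K;E)$, with $\psi_i \in (E_K)'$ and $z_i \in E$, arbitrarily close to $J_K$ in operator norm. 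From $K \subseteq C_K$ and $J_K(C_K) = C_K$ (Lemma \ref{lnofl}(a),(d)) one gets $A \subseteq \|u\|\,C_K$, so it suffices to approximate the embedding uniformly on $C_K$.

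The obstacle — and the reason this particular factorization is used — is that $V$ is defined on $E_K$ rather than on $E$: the functionals $\psi_i$ are continuous for $\|\cdot\|_K$ but need not extend to $E'$, so $V$ cannot be read directly as a finite-rank operator on $E$. This is exactly what Lemma \ref{lnofl}(c) is for: the injectivity of $J_K''$ is equivalent to $J_K'(E')$ being norm-dense in $(E_K)'$, so each $\psi_i$ can be approximated in $(E_K)'$ — that is, uniformly on $C_K$ — by a restriction $\phi_i|_{E_K}$ with $\phi_i \in E'$. Putting $\Phi = \sum_{i=1}^m \phi_i \otimes z_i \in {\cal F}(E;E)$ and using $A \subseteq \|u\|\,C_K$, I would estimate
$$\|x - \Phi(x)\| \le \|(J_K - V)(x)\| + \sum_{i=1}^m |\psi_i(x) - \phi_i(x)|\,\|z_i\| \le \|u\|\Big(\|J_K - V\| + \sum_{i=1}^m \|z_i\|\,\|\psi_i - \phi_i|_{E_K}\|_{(E_K)'}\Big)$$
for every $x \in A$, where both summands are controlled by the two approximations. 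Hence $\|{\rm id}_E - \Phi\|_A < \varepsilon$, proving (a). I expect the identification of the injectivity of $J_K''$ with the norm-density needed to lift the $\psi_i$ to $E'$ to be the delicate point; the remaining steps are bookkeeping with the seminorms $\|\cdot\|_A$.
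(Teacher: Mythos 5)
Your proposal is correct and follows essentially the same route as the paper: the three soft implications via Lemma \ref{lemmaaderencia} (merely reorganized into the cycle (a)$\Rightarrow$(d)$\Rightarrow$(c)$\Rightarrow$(b)$\Rightarrow$(a) instead of the paper's separate arrows), and the crucial step (b)$\Rightarrow$(a) via the LNO factorization, Proposition \ref{lemmajk} through the Grothendieck property of ${\cal J}_1$, and the norm-density of $J_K'(E')$ in $(E_K)'$ deduced from the injectivity of $J_K''$ (Lemma \ref{lnofl}(c)). Your final estimate, including the scaling $A \subseteq \|u\|\,C_K$, matches the paper's computation up to a regrouping of terms.
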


\begin{proof}

(a) $\Longrightarrow $ (b)  Let $F$ be a Banach space and $T\in
\mathcal{L}_{\mathcal{J}_1}(F;E)$. Since $T$ maps bounded sets to
${\mathcal {J}_1}$-bounded sets and, by assumption, ${\rm id}_E \in \overline{\mathcal{F}(E;E)}^{\tau_{C_{\mathcal {J}_1}}}$, Lemma \ref{lemmaaderencia} yields that $T = {\rm id}_E \circ T \in \overline{\mathcal{F}(F;E)}^{\|\cdot\|}$.\\
(b) $\Longrightarrow $ (a) Let $A\in C_{{\cal J}_1}(E)$ and $\varepsilon>0$ be given. There exists a Banach space $F$ and an operator $T\in \mathcal{J}_1(F;E) \subseteq \mathcal{L}_{\mathcal{J}_1}(F;E)$
  such that   $A \subseteq  T(B_F)$. Lettting $T=J_K\circ T_K$ be the LNO factorization of $T$, $J_K\in \mathcal{L}_{\mathcal{J}_1}(E_K;E)$  by Proposition \ref{lemmajk} as ${\cal J}_1$
  has the Grothendieck property. By assumption there exists an operator $S\in \mathcal{F}(E_K, E)$, say $S=\sum_{i=1}^{n} y_i^*\otimes
    x_i$ with $y_1^*, \ldots, y_n^*\in E'_K$ and $x_1, \ldots, x_n\in E$,
   such that $\|S - J_K\|<\dfrac{\varepsilon}{2\|T\|}$.
   We know that $J''$ is injective (Lemma \ref{lnofl}(c)), so
     $\overline{J'_K(E')}^{\|.\|}=E'_K$, thus for each $i = 1, \ldots, n$, there exists  $x^*_i\in
      E'$  such that $$\|y^*_i-J_K(x^*_i)\|<\dfrac{\varepsilon}{2 \|T\|\cdot\displaystyle\sum_{i=1}^n\|x_i\|}. $$
Let $R= \displaystyle\sum_{i=1}^{n} x_i^*\otimes
    x_i \in \mathcal{F}(E, E)$. For every $x\in K$, as $J_K(x)=x$ and $K \subseteq C_K = B_{E_K}$ (Lemma \ref{lnofl}(a),(b)), we have
\begin{align*}
\|R(x)-x\|&=\|R(J_K( x))-J_K( x)\|\leq \|(R\circ J_K)(x)-S(x)\|+ \|S(x)-J_K( x)\| \\
&\leq \|R\circ J_K-S\|\cdot\|x\|_K+ \|S-J_K\|\cdot\| x\|_K \leq \|R\circ J_K-S\|+ \|S-J_K\|\\  
&=  \left\| \displaystyle\sum_{i=1}^{n} (x_i^*\circ J_K)\otimes
    x_i- \displaystyle\sum_{i=1}^{n} y_i^*\otimes
    x_i\right\| + \|S-J_K\| \\
&= \left\|\displaystyle\sum_{i=1}^{n} \left(J'_K( x_i^*)-y_i^* \right)\otimes
    x_i\right\| + \|S-J_K\| \\
&\leq  \displaystyle\sum_{i=1}^{n}\| J'_K (x_i^*)-y_i^* \|\cdot\|x_i\|+ \|S-J_K\| \\
&<
\dfrac{\varepsilon}{2\|T\|\cdot\displaystyle\sum_{i=1}^n\|x_i\|}\cdot
\displaystyle\sum_{i=1}^n\|x_i\|+
\dfrac{\varepsilon}{2\|T\|}=\dfrac{\varepsilon}{\|T\|}.
\end{align*}
Since $A\subseteq T(B_F) \subseteq \|T\|K$, we have
$$\|R - {\rm id}_E\|_A \leq \|T\|\cdot \|R - {\rm id}_E\|_K < \varepsilon, $$
which proves that $ {\rm id}_E \in
\overline{\mathcal{F}(E;E)}^{\tau_{C_{\mathcal {J}_1}}}$.\\
 (b) $\Longrightarrow $(c)  The same argument of the proof of (b) $\Longrightarrow $(c) in Proposition \ref{prop00} works.\\
(c) $\Longrightarrow$ (b)
 Let $F$ be a Banach space and $T\in
\mathcal{L}_{\mathcal{J}_1}(F;E)$. As $
\mathcal{L}_{\mathcal{J}_1}=\mathcal{L}_{\mathcal{J}_1}\circ
\mathcal{L}_{\mathcal{J}_2}$, there exist a Banach space $Z$ and operators $\mathcal{L}_{\mathcal{J}_1}(Z;E)$ and $S\in
\mathcal{L}_{\mathcal{J}_2}(F;Z)$ such that $ T= R\circ S$. By assumption,
$R\in \overline{{\cal F}(Z;E)} ^{\tau_{C_{\mathcal {J}_2}}}$.
As $S$ maps bounded sets to
 $\mathcal{J}_2$-bounded sets, by Lemma \ref{lemmaaderencia} it follows that $ T= R\circ S\in
 \overline{\mathcal{F}(F;E)}^{\|\cdot\|}$.\\
(d) $\Longrightarrow$ (c)  It follows from the relation $
\mathcal{L}_{\mathcal{J}_1}\subseteq \cal I$ .\\
(a) $\Longrightarrow$ (d) The assumption that operators in $\cal I$ map
$\mathcal{J}_2$-bounded sets to $\mathcal{J}_1$-bounded sets allows us to repeat the argument of the proof of (a) $\Longrightarrow $(b).
\end{proof}

\begin{remark}\rm Observe that the Grothendieck property of ${\cal J}_1$ is used only in the proof of
(b) $\Longrightarrow$ (a) and the condition $\mathcal{L}_{\mathcal{J}_1}=\mathcal{L}_{\mathcal{J}_1}\circ
\mathcal{L}_{\mathcal{J}_2}$ is used only in the proof of (c) $\Longrightarrow$ (b).
\end{remark}

Let us see that Theorem \ref{theoequiv} recovers a result due to Choi, Kim and Lee \cite{choikimlee} as a particular case

\begin{example}\label{ckl}\rm Let ${\cal
J}_1=\mathcal{K}$, ${\cal J}_2= \cal K$ and $\mathcal I$ be an
operator ideal containing $\mathcal K$. Since the ideal $\cal K$ is
surjective and closed, it enjoys the Grothendieck property (cf. Example \ref{exGG}). It is well known that $\cal K= \cal K\circ \cal K$ (cf. the proof of \cite[Proposition 3.1.3]{pietsch} or \cite[Theorem 2.2]{OjaIsrael}), so by Theorem \ref{theoequiv}, the following statements are equivalent for a Banach space $E$:\\
(a) ${\rm id}_E \in \overline{\mathcal{F}(E;E)}^{\tau_c}$\\
(b)  $E$ has the $(\mathcal{K}, {\mathcal F}, \|\cdot\|)$-AP\\
 (c) $E$ has the $(\mathcal{K}, {\cal F},\tau_c )$-AP.\\
(d) $E$ has the $(\mathcal{I}, {\cal F},\tau_c )$-AP.

This recovers \cite[Theorem 2.4]{choikimlee}.
\end{example}

\section{Projective ideal topologies}\label{ptp}
In the previous section we showed that known results on approximation properties in Banach spaces can be recovered as particular instances of more general results in context of the approach to approximation properties by means of ideal topologies we propose in this paper. In this section we reinforce this unifying feature of our approach by proving that some recent results of \cite{erhanpilar,BB} on approximation properties in projective tensor products of Banach spaces are particular instances of much more general results in the realm of ideal topologies. It is worth noticing that two results of \c Caliskan and Rueda \cite{erhanpilar}, one for the weak approximation property of Choi and Kim \cite{Choi-Kim 2006} (cf. Example \ref{primex}(d)) and another one for the quasi approximation property (cf. Example \ref{primex}(e)), are in fact particular instances of one single result.

Our interest in approximation properties in projective tensor products of Banach spaces (remember that approximation properties and topological tensor products are closely connected since Grothendieck \cite{grothendieck}) leads us to the following refinement of the definition of ideal topology:

\begin{definition}\label{def2}\rm Let $\cal C$ be class of Banach spaces, that is, a subclass of BAN. A {\it $\cal C$-projective ideal topology} $\tau$ is a correspondence that, for all positive integers $n \in \mathbb{N}$ and Banach spaces $E, E_1, \ldots,E_n$ and $F$, assigns a linear topology, still denoted by $\tau$, on each of the following spaces: ${\cal L}(E;F)$, ${\cal P}(^n E;F)$ and ${\cal L}(E_1, \ldots, E_n;F)$; such that:\\
(i) When restricted to the spaces ${\cal L}(E;F)$, $\tau$ is an ideal topology.\\
(ii) For all $n \in \mathbb{N}$ and Banach spaces $E, E_1, \ldots, E_n,F$ with $E$ and at least one of the $E_j$ belonging to $\cal C$, the linear bijections
$$P \in \left({\cal P}(^n E;F), \tau \right) \mapsto P_L \in \left({\cal L}\left(\widehat{\otimes}_{s,\pi}^n E ;F\right) , \tau\right){\rm ~and} $$
$$A \in \left({\cal L}(E_1, \ldots, E_n;F), \tau \right) \mapsto A_L \in \left({\cal L}\left(E_1\widehat{\otimes}_{\pi}\cdots  \widehat{\otimes}_{\pi}E_n;F\right) , \tau\right) $$
are homeomorphisms. For simplicity, a BAN-projective ideal topology shall be referred to as a projective ideal topology.
\end{definition}

It is well known that the norm topology is a projective ideal topology. Let us see that the topology of pointwise convergente is a projective ideal topology as well:

\begin{proposition}\label{tcptip} The topology of pointwise convergence $\tau_P$ is a projective ideal topology.
\end{proposition}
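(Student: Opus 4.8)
The plan is to verify the two requirements in Definition \ref{def2} for $\tau = \tau_P$. Requirement (i) is already done: Example \ref{ex1}(b) shows that $\tau_P$, restricted to the spaces ${\cal L}(E;F)$, is an ideal topology. So the whole content of the proof lies in requirement (ii), namely that for every $n$ and all Banach spaces the linearization bijections $P \mapsto P_L$ and $A \mapsto A_L$ are homeomorphisms for $\tau_P$. Since these maps are already known to be linear bijections (standard facts about universal properties of the (symmetric) projective tensor product), the task reduces to showing that each map and its inverse are continuous for the pointwise-convergence topologies on the respective spaces.

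First I would recall that $\tau_P$ is, in each space, the locally convex topology generated by the seminorms $T \mapsto \|T(z)\|$ indexed by the points $z$ in the domain. Continuity of a linear map between such spaces amounts to showing that every generating seminorm on the target is dominated by (a finite combination of) generating seminorms on the source, evaluated through the map. So I would make the following dictionary explicit. For the multilinear case, a generating seminorm on ${\cal L}(E_1\widehat\otimes_\pi\cdots\widehat\otimes_\pi E_n;F)$ is $U \mapsto \|U(w)\|$ for a point $w$ of the completed projective tensor product, while a generating seminorm on ${\cal L}(E_1,\ldots,E_n;F)$ is $A \mapsto \|A(x_1,\ldots,x_n)\|$, which equals $\|A_L(x_1\otimes\cdots\otimes x_n)\|$; the analogous statement holds in the symmetric/polynomial case with elementary tensors $\otimes^n x$.

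The core computation is then the following two-sided estimate. For the forward direction (continuity of $A \mapsto A_L$), the seminorm $A \mapsto \|A(x_1,\ldots,x_n)\|$ is exactly $A \mapsto \|A_L(x_1\otimes\cdots\otimes x_n)\|$, which is one of the generating seminorms on the target space evaluated at $A_L$; hence $A \mapsto A_L$ is continuous because the defining seminorms of the source pull back from target seminorms. For the reverse direction (continuity of $U \mapsto U|_{\rm multilinear}$, i.e. the inverse map) the main obstacle appears: a generating seminorm on ${\cal L}(E_1\widehat\otimes_\pi\cdots\widehat\otimes_\pi E_n;F)$ is indexed by an \emph{arbitrary} point $w$ of the \emph{completed} tensor product, not merely by an elementary tensor, so it cannot be matched by a single seminorm coming from the multilinear side. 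The way around this is a density-plus-equicontinuity argument: I would fix $w$ in the completed projective tensor product, approximate it in the projective norm by a finite sum $w_0 = \sum_k \lambda_k\, x_1^{(k)}\otimes\cdots\otimes x_n^{(k)}$ of elementary tensors up to a prescribed tolerance, and then bound $\|A_L(w)\|$ by $\|A_L(w - w_0)\| + \|A_L(w_0)\|$. The first term is controlled by $\|A_L\|\cdot\|w - w_0\|_\pi$, but $\|A_L\| = \|A\|$ is \emph{not} a $\tau_P$-continuous seminorm, so one cannot conclude continuity at a single map this way. This is precisely why the hypothesis in Definition \ref{def2}(ii) requires the spaces, and it is the subtle point I expect the author to address: the homeomorphism assertion must be read at the level of the topologies (i.e. the two families of seminorms generate the \emph{same} topology after transport), and indeed the correct statement is that the $\tau_P$ topology on ${\cal L}(E_1,\ldots,E_n;F)$ transported by the linearization coincides with the subspace topology induced by $\tau_P$ on ${\cal L}(E_1\widehat\otimes_\pi\cdots\widehat\otimes_\pi E_n;F)$, because a net $A_\alpha \to A$ pointwise on all tuples $(x_1,\ldots,x_n)$ is equivalent, via linearity and density of finite sums of elementary tensors together with uniform boundedness on the tensor-product seminorm, to $A_{L,\alpha} \to A_L$ pointwise on all of the completed tensor product.

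Concretely, to finish I would argue with nets. The forward implication ($A_{L,\alpha}\to A_L$ pointwise $\Rightarrow A_\alpha\to A$ pointwise) is immediate since tuples map to elementary tensors. For the converse, suppose $A_\alpha\to A$ in $\tau_P$; then $A_{L,\alpha}\to A_L$ on the dense subspace spanned by elementary tensors, and one extends convergence to all $w$ in the completion by noting that on a single $w$ the evaluation map $U\mapsto U(w)$ is continuous for $\tau_P$ once one restricts to a set where the maps are uniformly bounded in operator norm — and pointwise convergence of a net in ${\cal L}(E_1\widehat\otimes_\pi\cdots\widehat\otimes_\pi E_n;F)$ on a dense subspace, together with the fact that linearizations preserve the relevant norm identity $\|A_L\| = \|A\|$, yields convergence everywhere by a standard $3\varepsilon$-argument. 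The symmetric/polynomial case is handled identically, replacing elementary tensors $x_1\otimes\cdots\otimes x_n$ by $\otimes^n x$ and using that their span is dense in $\widehat\otimes_{s,\pi}^n E$. I expect the author to present this compactly, since for $\tau_P$ the equicontinuity issue does not actually bite: pointwise convergence is tested one point at a time, and on each fixed $w$ the identity $P_L(\otimes^n x) = P(x)$ (respectively $A_L(x_1\otimes\cdots\otimes x_n) = A(x_1,\ldots,x_n)$) together with density of elementary tensors suffices, with no class restriction $\cal C$ needed in this particular instance.
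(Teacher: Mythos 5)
Your outline follows the same route as the paper's proof: the easy direction is evaluation at elementary tensors via $P_\lambda(x) = (P_\lambda)_L(\otimes^n x)$, and the hard direction reduces to evaluating the linearizations at an arbitrary point of the \emph{completed} tensor product, handled by splitting off a finite part and controlling the remainder with a uniform norm bound. (The paper works with the concrete series representation $z = \sum_{j=1}^\infty \lambda_j \otimes^n x_j$, $\sum_j |\lambda_j|\cdot\|x_j\|^n < \infty$, from \cite[Proposition 2.2(9)]{klaus} rather than abstract density of the span of elementary tensors, but that difference is cosmetic.) However, your proposal has a genuine gap at the decisive step: you \emph{assert}, but never obtain, the uniform bound $\sup_\lambda \|P_\lambda - P\| < \infty$ that your $3\varepsilon$-argument requires. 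You say one "restricts to a set where the maps are uniformly bounded in operator norm," but no such restriction is available from the hypotheses: $\tau_P$-convergence by itself carries no norm information — as you yourself observed, $\|A_L\| = \|A\|$ is not a $\tau_P$-continuous seminorm, so the norm identity only \emph{transfers} a bound, it does not \emph{produce} one. The paper supplies precisely this missing ingredient: $\tau_P$-convergence makes the family $(P_\lambda - P)_\lambda$ pointwise bounded, and the polynomial Banach--Steinhaus theorem \cite[Theorem 2.6]{livromujica} (respectively, the multilinear Banach--Steinhaus theorem of Bernardino \cite{Thiago}) upgrades this to norm boundedness, after which the tail estimate closes. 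Your concluding claim that "the equicontinuity issue does not actually bite" for $\tau_P$ is exactly backwards: on a fixed point $w$ of the completion, convergence on the dense span of elementary tensors yields nothing without a uniform bound, and this uniform-boundedness step is the one substantive idea in the paper's proof — the rest is bookkeeping.

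A minor but worth-correcting slip: your direction labels are swapped. The computation you call the "forward direction" — matching $\|A(x_1,\ldots,x_n)\|$ with $\|A_L(x_1\otimes\cdots\otimes x_n)\|$ — establishes continuity of the \emph{inverse} of the linearization map (from ${\cal L}(E_1\widehat{\otimes}_{\pi}\cdots\widehat{\otimes}_{\pi}E_n;F)$ back to ${\cal L}(E_1,\ldots,E_n;F)$), since elementary tensors index only \emph{some} of the target's seminorms; the hard direction, requiring control of evaluations at arbitrary $w$, is the continuity of $A \mapsto A_L$ itself. You have correctly identified which seminorm-matching problem is trivial and which is hard, so the substance of your analysis is sound, but as written the proof leaves its crux unproved.
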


\begin{proof}  We already know that $\tau_P$ is an ideal topology (Example \ref{ex1}(b)).
Let $(P_\lambda)_\lambda$ be a net in ${\cal P}(^n E;F)$ such that $P_\lambda \stackrel{\tau_P}{\longrightarrow} P \in {\cal P}(^n E;F)$. We have to prove that $(P_\lambda)_L \stackrel{\tau_P}{\longrightarrow} P_L$ in ${\cal L}\left(\widehat{\otimes}_{s,\pi}^n E ;F\right)$, that is, $(P_\lambda)_L (z) \longrightarrow P_L(z)$ in $F$ for every $z \in \widehat{\otimes}_{s,\pi}^n E$. Assume first that $z = \sum_{j=1}^k \lambda_j \otimes^n x_j$ for some $k \in \mathbb{N}$, $x_1, \ldots, x_k \in E$ and nonzero scalars $\lambda_1, \ldots, \lambda_k \in \mathbb{K}$. Given $\varepsilon > 0$, there exists $\lambda_0$ such that
$$\|P_\lambda - P\|_{\{x_1, \ldots, x_k\}} < \frac{\varepsilon}{k\cdot \max\limits_{j=1,\ldots, k}|\lambda_j|}{\rm~ ~for~every~} \lambda \geq \lambda_0. $$
So, for $\lambda \geq \lambda_0$,
\begin{align*} \|(P_\lambda)_L (z) - P_L(z)\|&= \left\|(P_\lambda - P)_L \left(\sum_{j=1}^k \lambda_j \otimes^n x_j\right) \right\|\leq \sum_{j=1}^k  \left\|(P_\lambda - P)_L \left(\lambda_j \otimes^n x_j\right) \right\|\\
&= \sum_{j=1}^k  |\lambda_j|\cdot \left\|(P_\lambda - P)_L \left(\otimes^n x_j\right) \right\| = \sum_{j=1}^k  |\lambda_j|\cdot \left\|(P_\lambda - P) \left(x_j\right) \right\| < \varepsilon.
\end{align*}
This proves that $(P_\lambda)_L (z) \longrightarrow P_L(z)$ in $F$. Observe that $\left(P_\lambda - P\right)_\lambda$ is collection of continuous $n$-homogeneous polynomials from the Banach space $\widehat{\otimes}_{s,\pi}^n E$ to the Banach space $F$. The convergence $P_\lambda \stackrel{\tau_P}{\longrightarrow} P$ implies, in particular, that the collection $\left(P_\lambda - P\right)_\lambda$ is pointwise bounded, so by the polynomial version of the Banach--Steinhaus Theorem \cite[Theorem 2.6]{livromujica} it follows that it is norm bounded, that is, there is a constant $K>0$ such that $\|P_\lambda - P\| \leq K$ for every $\lambda$. Let now $z$ be an arbitrary element of $\widehat{\otimes}_{s,\pi}^n E $. There are sequences $(x_j)_{j=1}^\infty$ in $E$ and $(\lambda_j)_{j=1}^\infty$ in $\mathbb{K}$ such that
$$z = \sum_{j=1}^\infty \lambda_j \otimes^n x_j {\rm ~~and~~} \sum_{j=1}^\infty |\lambda_j|\cdot\| x_j\|^n < \infty $$
(see \cite[Proposition 2.2(9)]{klaus}). Given $\varepsilon > 0$, let $n_0$ be such that $\sum_{j= n_0+1}^\infty |\lambda_j|\cdot\| x_j\|^n< \frac{\varepsilon}{2K}$. Calling $z' = \sum_{j=1}^{n_0} \lambda_j \otimes^n x_j$, by the first part of the proof we know that $(P_\lambda)_L (z') \longrightarrow P_L(z')$ in $F$. Let $\lambda_0$ be such that $\|(P_\lambda)_L(z') - P_L(z')\| < \frac{\varepsilon}{2}$ whenever $\lambda \geq \lambda_0$. Thus,
\begin{align*} \|(P_\lambda)_L (z) - P_L(z)\|&= \left\|(P_\lambda - P)_L \left(\sum_{j=1}^\infty \lambda_j \otimes^n x_j\right) \right\| \\&= \left\|(P_\lambda - P)_L \left(\sum_{j=1}^{n_0} \lambda_j \otimes^n x_j\right) + (P_\lambda - P)_L \left(\sum_{j=n_0+1}^\infty \lambda_j \otimes^n x_j\right) \right\|\\& \leq \left\|(P_\lambda - P)_L \left(\sum_{j=1}^{n_0} \lambda_j \otimes^n x_j\right)\right\| +\sum_{j=n_0+1}^\infty \left\|(P_\lambda - P)_L \left( \lambda_j \otimes^n x_j\right) \right\|\\& < \frac{\varepsilon}{2} + \sum_{j=n_0+1}^\infty |\lambda_j|\cdot \left\|(P_\lambda - P) \left( x_j\right) \right\|\\&\leq \frac{\varepsilon}{2} + \sum_{j=n_0+1}^\infty |\lambda_j|\cdot \left\|P_\lambda - P\right\|\cdot \left\| x_j\right\|^n < \varepsilon,
\end{align*}
for every $\lambda \geq \lambda_0$, proving that $(P_\lambda)_L (z) \longrightarrow P_L(z)$ in $F$.

The converse is easy. Given a net $(u_\lambda)_\lambda$ in ${\cal L}\left(\widehat{\otimes}_{s,\pi}^n E ;F\right)$ such that $u_\lambda \stackrel{\tau_P}{\longrightarrow} u \in {\cal L}\left(\widehat{\otimes}_{s,\pi}^n E ;F\right)$, there are (unique) polynomials $(P_\lambda)_\lambda$ and $P$ in ${\cal P}(^n E;F)$ such that $(P_\lambda)_L = u_\lambda$ for every $\lambda$ and $P_L = u$. For every $x \in E$,
$$P_\lambda(x)   = (P_\lambda)_L(\otimes^n x) = u_\lambda(\otimes^n x) \longrightarrow u(\otimes^n x) = P_L( \otimes^nx) = P(x).$$
This proves that $P_\lambda \stackrel{\tau_P}{\longrightarrow} P$ and completes the proof of the polynomial case of condition \ref{def2}(ii). The multilinear case is analogous (for a simple proof of the multilinear Banach--Steinhaus Theorem, see Bernardino \cite{Thiago}).
\end{proof}

Now let us give some further examples of projective ideal topologies that are useful in the study of the approximation properties. For $A \subseteq E$ and $A_j \subseteq E_j$, $j = 1, \ldots, n$, define
$$A_1 \otimes \cdots \otimes A_n := \{x_1 \otimes \cdots \otimes x_n : x_j \in A_j, j= 1, \ldots, n\}\subseteq E_1 \otimes \cdots \otimes E_n, $$
$$\otimes_s^n A := \{\otimes^n x : x \in A\} \subseteq \otimes_s^n E. $$

\begin{proposition}\label{proppp} Let ${\cal C} \subseteq {\rm BAN}$ be given. Suppose that for every Banach space $E$ it has been assigned a collection ${\cal A}(E)$ of bounded subsets of $E$ containing the singletons, satisfying {\rm (\ref{cond})} and such that, for all $n \in \mathbb{N}$ and Banach spaces $E_1, \ldots, E_n,E$ with $E,E_j \in {\cal C}$ for some $j$, the following hold:\\
{\rm (i)} Every $A \in {\cal A}(E_1\widehat{\otimes}_{\pi}\cdots  \widehat{\otimes}_{\pi}E_n)$ is contained in a finite union of sets of the form $\overline{{\rm co}}(A_1 \otimes \cdots \otimes A_n)$, where $A_j \in {\cal A}(E_j)$, $j = 1, \ldots, n$.\\
{\rm (ii)} If $A_j \in {\cal A}(E_j)$ for $j = 1, \ldots, n$, then there is $A \in  {\cal A}(E_1\widehat{\otimes}_{\pi}\cdots  \widehat{\otimes}_{\pi}E_n)$ such that $A_1 \otimes \cdots \otimes A_n \subseteq A$.  \\
{\rm (iii)}  Every $A \in {\cal A}\left(\widehat{\otimes}_{s,\pi}^n E \right)$ is contained in a finite union of sets of the form $\overline{{\rm co}}(\otimes_s^nA')$, where $A' \in {\cal A}(E)$. \\
{\rm (iv)} If $A \in {\cal A}(E)$, then there is $A' \in {\cal A}\left(\widehat{\otimes}_{s,\pi}^n E \right) $ such that $\otimes_s^nA \subseteq A'$.\\
By $\tau_{\cal A}$ we mean the topology on the spaces ${\cal L}(E;F)$ and ${\cal P}(^nE;F)$ of uniform convergence on sets of ${\cal A}(E)$, and the topology on the space ${\cal L}(E_1, \ldots, E_n;F)$ of uniform convergence on sets of ${\cal A}(E_1) \times \cdots \times {\cal A}(E_n)$. Then $\tau_{\cal A}$ is a $\cal C$-projective ideal topology.
\end{proposition}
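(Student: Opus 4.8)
The plan is to verify the two requirements of Definition \ref{def2}. Requirement (i) is immediate: since each ${\cal A}(E)$ contains the singletons and fulfils condition (\ref{cond}), Proposition \ref{propger} already guarantees that the restriction of $\tau_{\cal A}$ to the spaces ${\cal L}(E;F)$ is an ideal topology. So the whole content lies in requirement (ii), that the linearizations are homeomorphisms. The maps $P \mapsto P_L$ and $A \mapsto A_L$ are linear bijections by the universal property of the (symmetric) projective tensor product, and on every space involved $\tau_{\cal A}$ is the locally convex topology generated by an explicit family of seminorms. Hence it suffices to show that, under these bijections, the two seminorm families dominate each other: a linear bijection between locally convex spaces is a homeomorphism exactly when each seminorm on one side is bounded by a finite maximum of the seminorms on the other side composed with the map. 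I will carry this out for the symmetric (polynomial) case; the multilinear case is identical, with conditions (i),(ii) playing the roles of (iii),(iv).

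The computational heart is the observation that for any subset $S$ of a Banach space and any continuous linear operator $u$ one has $\sup_{z \in \overline{\rm co}(S)}\|u(z)\| = \sup_{w \in S}\|u(w)\|$, because a convex combination $\sum_i \lambda_i w_i$ satisfies $\|u(\sum_i \lambda_i w_i)\| \le \sum_i \lambda_i\|u(w_i)\|$ and passing to the closure is harmless by continuity of $u$ and of the norm. Applying this with $u = P_L$ and $S = \otimes_s^n A'$ for $A' \in {\cal A}(E)$, and using $P_L(\otimes^n x) = P(x)$, gives
$$\sup_{z \in \overline{\rm co}(\otimes_s^n A')}\|P_L(z)\| = \sup_{x \in A'}\|P(x)\| = \|P\|_{A'}.$$
To see that $P \mapsto P_L$ is continuous, fix $A \in {\cal A}(\widehat{\otimes}_{s,\pi}^n E)$. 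By condition (iii) there are $A'_1, \ldots, A'_m \in {\cal A}(E)$ with $A \subseteq \bigcup_{i=1}^m \overline{\rm co}(\otimes_s^n A'_i)$, whence $\|P_L\|_A \le \max_{1 \le i \le m}\|P\|_{A'_i}$, the desired domination.

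For continuity of the inverse, fix $A' \in {\cal A}(E)$; by condition (iv) there is $A \in {\cal A}(\widehat{\otimes}_{s,\pi}^n E)$ with $\otimes_s^n A' \subseteq A$, and then
$$\|P\|_{A'} = \sup_{x \in A'}\|P_L(\otimes^n x)\| \le \sup_{z \in A}\|P_L(z)\| = \|P_L\|_A,$$
the reverse domination. This proves $P \mapsto P_L$ is a homeomorphism. The multilinear statement follows verbatim from $\sup_{\overline{\rm co}(A_1\otimes\cdots\otimes A_n)}\|T_L\| = \|T\|_{A_1\times\cdots\times A_n}$ together with condition (i) for continuity of $T \mapsto T_L$ and condition (ii) for its inverse; here one only uses $A_j \in {\cal A}(E_j)$, which matches the hypothesis that $E$ and at least one $E_j$ lie in $\cal C$ — precisely the regime in which (i)--(iv) are assumed. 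The proof is not deep, but the one point demanding care is matching the two families of hypotheses to the two directions: the covering conditions (i),(iii) bound $T_L,P_L$ from above (continuity of the linearization), while the inclusion conditions (ii),(iv) bound them from below (continuity of the inverse), and these must not be interchanged. Beyond this bookkeeping and the closed-convex-hull observation, nothing substantial is involved.
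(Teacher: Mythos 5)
Your proof is correct and follows essentially the same route as the paper's: both arguments rest on the identity $\|u\|_{\overline{\rm co}(S)} = \|u\|_S$ for continuous linear $u$, with the covering conditions (iii)/(i) yielding continuity of $P \mapsto P_L$ (resp.\ $A \mapsto A_L$) and the inclusion conditions (iv)/(ii) yielding continuity of the inverse. The only difference is presentational --- the paper checks convergence of nets in each direction, while you state the equivalent seminorm dominations directly --- so the content is the same.
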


\begin{proof} We already know that $\tau_{\cal A}$ is an ideal topology (Proposition \ref{propger}). Let $E$ and $F$ be Banach spaces with $E \in {\cal C}$ and let $(P_\lambda)_\lambda$ be a net in ${\cal P}(^n E;F)$ such that $P_\lambda \stackrel{\tau_{\cal A}}{\longrightarrow} P \in {\cal P}(^n E;F)$. Let $A  \in {\cal A}\left(\widehat{\otimes}_{s,\pi}^n E\right)$ and $\varepsilon > 0$. By condition (iii) there exist $k \in \mathbb{N}$ and sets $A_1', \ldots, A_k' \in {\cal A}(E)$ such that $A \subseteq \bigcup\limits_{j=1}^k\left( \overline{{\rm co}}(\otimes_n^sA_j')\right)$. Let $\lambda_0$ be such that $\|P_\lambda - P\|_{A_j'} < \varepsilon$, $j = 1, \ldots, k$, whenever $\lambda \geq \lambda_0$. Since $(P_\lambda)_L$ and $P_L$ are continuous linear operators,
\begin{align*}\|(P_\lambda)_L - P_L\|_A &\leq \|(P_\lambda)_L - P_L\|_{\bigcup\limits_{j=1}^k\left( \overline{{\rm co}}(\otimes_n^sA_j')\right)} = \max_{j=1, \ldots, k} \|(P_\lambda)_L - P_L\|_{{\overline{\rm co}}(\otimes_n^sA_j')} = \\& = \max_{j=1, \ldots, k} \|(P_\lambda)_L - P_L\|_{\otimes_n^sA_j'} = \max_{j=1, \ldots, k} \|P_\lambda - P\|_{A'_j} < \varepsilon
\end{align*}
whenever $\lambda \geq \lambda_0$. This proves that $(P_\lambda)_L \stackrel{\tau_c}{\longrightarrow} P_L$ in ${\cal L}\left(\widehat{\otimes}_{s,\pi}^n E ;F\right)$.

Conversely, let $(u_\lambda)_\lambda$ be a net in ${\cal L}\left(\widehat{\otimes}_{s,\pi}^n E ;F\right)$ such that $u_\lambda \stackrel{\tau_{\cal A}}{\longrightarrow} u \in {\cal L}\left(\widehat{\otimes}_{s,\pi}^n E ;F\right)$. There are $(P_\lambda)_\lambda$ and $P$ in ${\cal P}(^n E;F)$ such that $(P_\lambda)_L = u_\lambda$ for every $\lambda$ and $P_L = u$. Let $A \in {\cal A}(E)$ and $\varepsilon > 0$. By condition (iv) there is a set $A' \in {\cal A}\left(\widehat{\otimes}_{s,\pi}^n E\right)$ such that $\otimes_n^s(A) \subseteq A'$. So there is $\lambda_0$ such that $\|u_\lambda - u\|_{A'} < \varepsilon$ for $\lambda \geq \lambda_0$. Thus,
$$\|P_\lambda - P\|_A = \|(P_\lambda)_L - P_L\|_{\otimes_n^sA} = \|u_\lambda - u\|_{\otimes_n^s A} \leq \|u_\lambda - u\|_{A'} < \varepsilon,  $$
for $\lambda \geq \lambda_0$. This proves that $P_\lambda \stackrel{\tau_{\cal A}}{\longrightarrow} P$ and completes the proof of the polynomial case of condition \ref{def2}(ii). The multilinear case is analogous.
\end{proof}

\begin{example}\label{excapai}\rm Choosing ${\cal A}(E)$ as the collection of compact subsets of the Banach space $E$, it is well known that the conditions of Proposition \ref{proppp} are fulfilled. Indeed, condition {\rm (\ref{cond})} is obvious; every compact subset of $E_1\widehat{\otimes}_{\pi}\cdots  \widehat{\otimes}_{\pi}E_n$ is contained in a set of the form $\overline{{\rm co}}(A_1 \otimes \cdots \otimes A_n)$, where $A_j$ is compact in $E_j$ for $j = 1, \ldots, n$ (see Diestel and Puglisi \cite[Proposition 2.1]{diestelpuglisi}); and $K_1 \otimes \cdots \otimes K_n$ is compact in $E_1\widehat{\otimes}_{\pi}\cdots  \widehat{\otimes}_{\pi}E_n$ whenever $K_j$ is compact in $E_j$, $j = 1, \ldots, n$ \cite[p.\,509]{diestelpuglisi}. So letting $\tau_c$ be the compact-open topology on the spaces ${\cal L}(E;F)$ and ${\cal P}(^nE;F)$ and the topology on the space ${\cal L}(E_1, \ldots, E_n;F)$ of uniform convergent on cartesian products of compact sets, we have by Proposition \ref{proppp} that $\tau_c$ is a projective ideal topology.
\end{example}

\begin{example}\label{excapai2}\rm For every Banach space $E$, let ${\cal A}(E)$ be the collection of convex compact subsets of $E$. Trivially, $\cal A$ satisfies condition {\rm (\ref{cond})}. As to condition \ref{proppp}(i), given a compact convex set $A \in E_1\widehat{\otimes}_{\pi}\cdots  \widehat{\otimes}_{\pi}E_n$, as in Example \ref{excapai} there are compact sets $A_j \subseteq E_j$, $j = 1, \ldots n$, such that $A \subseteq \overline{{\rm co}}(A_1 \otimes \cdots \otimes A_n)$. Then each $\overline{\rm co}(A_j)$ is compact and convex in $E_j$ by Mazur's Compactness Theorem \cite[Theorem 2.8.15]{megginson} and $A \subseteq \overline{{\rm co}}(\overline{{\rm co}}(A_1) \otimes \cdots \otimes \overline{{\rm co}}(A_n))$. As to condition \ref{proppp}(ii), given convex compacts sets $K_j \subseteq E_j$, $j = 1,\ldots, n$, as in Example  \ref{excapai} we know that $K_1 \otimes \cdots \otimes K_n$ is compact in $E_1\widehat{\otimes}_{\pi}\cdots  \widehat{\otimes}_{\pi}E_n$. By Mazur's Theorem we have that $\overline{\rm co}(K_1 \otimes \cdots \otimes K_n)$ is a compact convex set containing $K_1 \otimes \cdots \otimes K_n$. By Proposition \ref{proppp}, if $\tau_{\cal A}$ is the topology on spaces of linear operators and polynomials of uniform convergence on compact convex sets and the topology on spaces of multilinear mappings of uniform convergence on cartesian products of compact convex sets, then $\tau_{\cal A}$ is a projective ideal topology.
\end{example}

\begin{example}\label{excapai3}\rm For every Banach space $E$, let ${\cal A}(E)$ be the collection of weakly compact subsets of $E$. Since bounded linear operators are weak-weak continuous, $\cal A$ satisfies condition {\rm (\ref{cond})}. Diestel and Puglisi \cite[Theorem 3.1]{diestelpuglisi} guarantees that condition \ref{proppp}(i) is fulfilled for all Banach spaces. Let ${\cal DP}$ be the class of all Banach spaces with the Dunford-Pettis property. By \cite[Proposition 2.5]{diestelpuglisi} we know that if $A_j$ is weakly compact in $E_j$, $j = 1, \ldots, n$, and some of the $E_j$ has the Dunford-Pettis property, then $A_1 \otimes \cdots \otimes A_n$ is weakly compact in $ E_1\widehat{\otimes}_{\pi}\cdots  \widehat{\otimes}_{\pi}E_n$. This proves condition \ref{proppp}(ii) and shows that the topology of uniform convergence on weakly compact sets or on products of weakly compact sets is a ${\cal DP}$-projective ideal topology.
\end{example}

\begin{example}\label{excapai4}\rm For every Banach space $E$, let ${\cal A}(E)$ be the collection of convex weakly compact subsets of $E$. As before, $\cal A$ satisfies condition {\rm (\ref{cond})}. Applying \cite[Theorem 3.1]{diestelpuglisi} together with the Krein-Smulian Theorem (the closed convex hull of a weakly compact subset of a Banach space is weakly compact as well) we have that condition \ref{proppp}(i) is fulfilled for all Banach spaces. Let ${\cal DP}$ be the class of all Banach spaces with the Dunford-Pettis property. Applying \cite[Proposition 2.5]{diestelpuglisi} together with the same Krein-Smulian Theorem we have that  condition \ref{proppp}(ii) is satisfied for the class $\cal DP$. So the topology of uniform convergence on convex weakly compact sets or on products of convex weakly compact sets is a ${\cal DP}$-projective ideal topology.
\end{example}

Let us put the projective ideal topologies to work. Our first aim is to generalize the results of \c Caliskan and Rueda \cite[Section 3]{erhanpilar}.

We need the notion of composition multi-ideals and composition polynomial ideals:

\begin{definition}\rm Let $\cal I$ be an operator ideal. We say that:\\
(a) A multilinear mapping $A \in {\cal L}(E_1, \ldots, E_n;F)$ belongs to the composition multi-ideal ${\cal I \circ L}$, in symbols $A \in {\cal I \circ L}(E_1, \ldots, E_n;F)$, if there are Banach spaces $G$, a multilinear mapping $B \in {\cal L}(E_1, \ldots, E_n;G)$ and an operator $u \in {\cal I}(G;F)$ such that $A = u \circ B$.\\
(b) A polynomial $P \in {\cal P}(^n E;F)$ belongs to the composition polynomial ideal ${\cal I \circ P}$, in symbols $P \in {\cal I \circ P}(^n E;F)$, if there are a Banach space $G$, a polynomial $Q \in {\cal P}(^n E;G)$ and an operator $u \in {\cal I}(G;F)$ such that $P = u \circ Q$.
\end{definition}

Further details on these  polynomial/multi-ideals can be found in \cite{PRIMS}.

\begin{proposition}\label{prop1} Let ${\cal I}, {\cal J}$ be operator ideals, ${\cal C}\subseteq {\rm BAN}$, $\tau$ be a $\cal C$-projective ideal topology, $n \in \mathbb{N}$ and $E,F$ be Banach spaces with $E \in {\cal C}$. Consider the following conditions:
\begin{enumerate}
\item[\rm(a)] $\mathcal{I}\left(\widehat{\otimes}^{n}_{s,\pi}E;
F\right)\subseteq \overline{\mathcal
{J}\left(\widehat{\otimes}^{n}_{s,\pi}E;
F\right)}^{\, \tau}$.
\item[\rm (b)] $\mathcal{I}\circ \mathcal{P}\left(^nE;
F\right)\subseteq \overline{ \mathcal{J}\circ
\mathcal{P}(^nE; F)}^{\, \tau}$.
\item[\rm (c)] $\mathcal{I} \left(E; F\right)\subseteq
\overline{\mathcal {J}(E;F)}^{\, \tau}$.
\end{enumerate}
Then {\rm (a)} and {\rm (b)} are equivalent and they imply {\rm (c)}.
\end{proposition}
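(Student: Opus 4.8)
The plan is to prove the equivalence of (a) and (b) by exploiting the fact that $\tau$ is a $\cal C$-projective ideal topology, so that the linearization map $P \mapsto P_L$ is a homeomorphism between $\left({\cal P}(^nE;F),\tau\right)$ and $\left({\cal L}\left(\widehat{\otimes}_{s,\pi}^n E;F\right),\tau\right)$ whenever $E \in {\cal C}$. The key observation I would first establish is a dictionary at the level of the operator ideals themselves: a polynomial $P$ belongs to ${\cal I}\circ {\cal P}(^nE;F)$ if and only if its linearization $P_L$ belongs to ${\cal I}\left(\widehat{\otimes}_{s,\pi}^n E;F\right)$, and likewise with ${\cal J}$ in place of ${\cal I}$. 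One direction of this is immediate from the definition of ${\cal I}\circ {\cal P}$ (if $P = u \circ Q$ with $u \in {\cal I}$, then $P_L = u \circ Q_L$ factors through ${\cal I}$); the reverse direction uses that if $P_L \in {\cal I}$ then $P = P_L \circ \sigma_n$, where $\sigma_n \colon E \to \widehat{\otimes}_{s,\pi}^n E$ is the canonical $n$-homogeneous polynomial $x \mapsto \otimes^n x$, so $P$ factors as an ${\cal I}$-operator composed with a polynomial.

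With this dictionary in hand, the equivalence of (a) and (b) becomes essentially a transport of closures under a homeomorphism. First I would show (a)$\Rightarrow$(b): take $P \in {\cal I}\circ{\cal P}(^nE;F)$, so $P_L \in {\cal I}\left(\widehat{\otimes}_{s,\pi}^n E;F\right)$; by (a), $P_L \in \overline{{\cal J}\left(\widehat{\otimes}_{s,\pi}^n E;F\right)}^{\,\tau}$, meaning there is a net $(v_\lambda)_\lambda$ in ${\cal J}\left(\widehat{\otimes}_{s,\pi}^n E;F\right)$ with $v_\lambda \stackrel{\tau}{\to} P_L$. Each $v_\lambda$ is the linearization $(Q_\lambda)_L$ of a unique $Q_\lambda \in {\cal P}(^nE;F)$, and since $v_\lambda \in {\cal J}$ the dictionary gives $Q_\lambda \in {\cal J}\circ{\cal P}(^nE;F)$. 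Because the inverse linearization map is $\tau$-continuous (homeomorphism), $Q_\lambda \stackrel{\tau}{\to} P$, whence $P \in \overline{{\cal J}\circ{\cal P}(^nE;F)}^{\,\tau}$. The converse (b)$\Rightarrow$(a) is symmetric: given $v \in {\cal I}\left(\widehat{\otimes}_{s,\pi}^n E;F\right)$, write $v = P_L$ with $P \in {\cal I}\circ{\cal P}(^nE;F)$, apply (b) to approximate $P$ in $\tau$ by a net in ${\cal J}\circ{\cal P}(^nE;F)$, and push forward through the (forward) linearization homeomorphism to approximate $v$ by a net in ${\cal J}\left(\widehat{\otimes}_{s,\pi}^n E;F\right)$.

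For the implication (a)$\Rightarrow$(c) (equivalently (b)$\Rightarrow$(c)), I would use Lemma \ref{lemmaaderencia} or a direct factorization argument. The natural route is to relate an operator $w \in {\cal I}(E;F)$ to a suitable operator on $\widehat{\otimes}_{s,\pi}^n E$ and invoke (a). Concretely, one composes with the canonical polynomial $\sigma_n \colon x \mapsto \otimes^n x$ and a retraction-type map, or one observes that $E$ sits inside $\widehat{\otimes}_{s,\pi}^n E$ in a way compatible with the ideal structure, so that approximation of operators on the tensor product descends to approximation of operators on $E$. Since the linearization identifies $\tau$ on the two spaces, the $\tau$-density statement (a) on $\widehat{\otimes}_{s,\pi}^n E$ should transfer to the $\tau$-density statement (c) on $E$ via the operators $\sigma_n$ and a first-coordinate projection, using the ideal property of $\overline{{\cal J}}^{\,\tau}$ established in Proposition \ref{propger}.

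The main obstacle I anticipate is verifying the operator-ideal dictionary cleanly, specifically the reverse direction $P_L \in {\cal I} \Rightarrow P \in {\cal I}\circ{\cal P}$, which hinges on the factorization $P = P_L \circ \sigma_n$ together with $\sigma_n$ being a genuine continuous $n$-homogeneous polynomial into the symmetric projective tensor product; one must check that this factorization indeed lands $P$ in the composition ideal ${\cal I}\circ{\cal P}$ as defined. A secondary delicate point is the passage (a)$\Rightarrow$(c): making precise how an operator $w \in {\cal I}(E;F)$ is encoded as (or recovered from) a linear operator on $\widehat{\otimes}_{s,\pi}^n E$ so that condition (a) can be applied, and confirming that the resulting approximating net of ${\cal J}$-operators on the tensor product yields, after composition, an approximating net in ${\cal J}(E;F)$ for $w$ with respect to $\tau$. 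Everything else — transporting nets across the homeomorphism and bookkeeping the closures — is routine once the dictionary and the encoding of $E$ into $\widehat{\otimes}_{s,\pi}^n E$ are fixed.
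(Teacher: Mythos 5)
Your handling of the equivalence (a) $\Leftrightarrow$ (b) is correct and is in substance the paper's argument: the ``dictionary'' you set up --- $P \in {\cal I}\circ{\cal P}(^nE;F)$ if and only if $P_L \in {\cal I}\left(\widehat{\otimes}_{s,\pi}^n E;F\right)$, proved via $P_L = u \circ Q_L$ in one direction and the factorization $P = P_L \circ \sigma_n$ through the canonical polynomial in the other --- is exactly \cite[Proposition 3.2]{PRIMS}, which the paper cites rather than reproves. The paper then transports the closure through the homeomorphism $L$ by a direct set-theoretic computation ($L(\overline{A}^{\,\tau}) = \overline{L(A)}^{\,\tau}$) instead of chasing nets, but that is a cosmetic difference.

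The genuine gap is in (a) $\Rightarrow$ (c), and it is precisely the point you flagged as ``delicate'' without resolving it. What is needed is a \emph{linear} factorization of ${\rm id}_E$ through $\widehat{\otimes}_{s,\pi}^n E$: the paper invokes Blasco's theorem \cite[Corollary 4]{Blasco}, by which $\widehat{\otimes}_{s,\pi}^n E$ contains a complemented isomorphic copy of $E$, yielding bounded linear operators $j \colon E \longrightarrow \widehat{\otimes}_{s,\pi}^n E$ and $p \colon \widehat{\otimes}_{s,\pi}^n E \longrightarrow E$ with $p \circ j = {\rm id}_E$. Then for $u \in {\cal I}(E;F)$ one has $u \circ p \in {\cal I}\left(\widehat{\otimes}_{s,\pi}^n E;F\right)$, condition (a) places $u \circ p$ in $\overline{{\cal J}\left(\widehat{\otimes}_{s,\pi}^n E;F\right)}^{\,\tau}$, and the ideal property of $\overline{{\cal J}}^{\,\tau}$ (this is where the hypothesis that $\tau$ is an ideal topology enters) gives $u = (u \circ p) \circ j \in \overline{{\cal J}(E;F)}^{\,\tau}$. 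Your concrete mechanism --- composing with $\sigma_n \colon x \mapsto \otimes^n x$ and a ``first-coordinate projection'' --- cannot work as stated: $\sigma_n$ is an $n$-homogeneous polynomial, not a linear operator, so it is not admissible in the ideal property of $\overline{{\cal J}}^{\,\tau}$ (right composition must be with elements of ${\cal L}$), and even given a projection $p$, the composite $p \circ \sigma_n$ is an $n$-homogeneous polynomial on $E$, not ${\rm id}_E$. Note also that the naive coordinate-fixing argument does produce complemented copies in the \emph{full} tensor product $E_1\widehat{\otimes}_{\pi}\cdots\widehat{\otimes}_{\pi}E_n$ (the paper remarks this is why the multilinear analogue, Proposition \ref{prop2}, is easier), but for the symmetric product no such projection is available and Blasco's complementation result is the nontrivial input your sketch is missing.
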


\begin{proof} Let $L \colon \left({\cal P}(^n E;F), \tau \right) \longrightarrow \left({\cal L}\left(\widehat{\otimes}_{s,\pi}^n E ;F\right) , \tau\right) $ be the linearization operator, that is, $L(P) = P_L$.\\
 (a) $\Longrightarrow$ (b) By \cite[Proposition 3.2]{PRIMS} we know that $L\left({\cal I} \circ {\cal P}(^n E;F)\right) = {\cal I}\left(\widehat{\otimes}_{s,\pi}^n E ;F\right)$ and $L\left({\cal J} \circ {\cal P}(^n E;F)\right) = {\cal J} \left(\widehat{\otimes}_{s,\pi}^n E ;F\right)$. Since $L$ is a homeomorphism, we have
\begin{align*}
\mathcal{I}\circ \mathcal{P}\left(^nE;
F\right)& = L^{-1}\left(L\left(\mathcal{I}\circ \mathcal{P}\left(^nE;
F\right)\right) \right) = L^{-1}\left( {\cal I}\left(\widehat{\otimes}_{s,\pi}^n E ;F\right)\right)\\&\subseteq L^{-1}\left(\overline{\mathcal
{J}\left(\widehat{\otimes}^{n}_{s,\pi}E;
F\right)}^{\, \tau} \right) = \overline{L^{-1}\left(\mathcal
{J}\left(\widehat{\otimes}^{n}_{s,\pi}E;
F\right) \right)}^{\, \tau} = \overline{\mathcal{J}\circ \mathcal{P}\left(^nE;
F\right)}^{\, \tau}.
\end{align*}
(b) $\Longrightarrow$ (a) In the same fashion,
\begin{align*}
\mathcal{I}\left(\widehat{\otimes}^{n}_{s,\pi}E;
F\right)& = L\left(L^{-1}\left(\mathcal{I}\left(\widehat{\otimes}^{n}_{s,\pi}E;
F\right) \right) \right) = L \left(\mathcal{I}\circ \mathcal{P}\left(^nE;
F\right) \right) \subseteq L \left(\overline{ \mathcal{J}\circ
\mathcal{P}(^nE; F)}^{\, \tau} \right)\\
& = \overline{L \left( \mathcal{J}\circ
\mathcal{P}(^nE; F) \right)}^{\, \tau} = \overline{\mathcal
{J}\left(\widehat{\otimes}^{n}_{s,\pi}E;
F\right)}^{\, \tau}.
\end{align*}
(a) $\Longrightarrow$ (c) Let $u \in {\cal I}(E;F)$. It is well known that $\widehat{\otimes}^{n}_{s,\pi}E$ contains a complemented isomorphic copy of $E$ \cite[Corollary 4]{Blasco}, so there are continuous linear operators $j \colon E \longrightarrow \widehat{\otimes}^{n}_{s,\pi}E$ and $p \colon \widehat{\otimes}^{n}_{s,\pi}E \longrightarrow E$ such that $p \circ j = {\rm id}_E$. Then $u \circ p \in {\cal I}\left(\widehat{\otimes}^{n}_{s,\pi}E;F \right)$ and, by assumption, $u \circ p \in \overline{\mathcal
{J}\left(\widehat{\otimes}^{n}_{s,\pi}E;
F\right)}^{\, \tau}$. The ideal property of $\overline{\cal J}^{\, \tau}$ gives $u = u \circ p \circ j \in \overline{\mathcal {J}(E;F)}^{\, \tau}$.
\end{proof}

Making $F = \widehat{\otimes}^{n}_{s,\pi}E$ in Proposition \ref{prop1} we obtain:

\begin{theorem}\label{theo1} Let ${\cal I}, {\cal J}$ be operator ideals, ${\cal C}\subseteq {\rm BAN}$, $\tau$ be a $\cal C$-projective ideal topology, $n \in \mathbb{N}$ and $E \in {\cal C}$. Consider the following conditions:
\begin{enumerate}
\item[\rm (a)] $\widehat{\otimes}^{n}_{s,\pi}E$ has the $(\cal{I}, {\cal J},\tau)$-WAP.
\item[\rm (b)] $\mathcal{I}\circ \mathcal{P}\left(^nE;
\widehat{\otimes}^{n}_{s,\pi}E\right)\subseteq \overline{{\cal J} \circ \mathcal{P}\left(^nE; \widehat{\otimes}^{n}_{s,\pi}E \right)}^{\, \tau}$.
\item[\rm (c)] $\mathcal{I} \left(E; \widehat{\otimes}^{n}_{s,\pi}E \right)\subseteq
\overline{\mathcal {J}\left(E;
\widehat{\otimes}^{n}_{s,\pi}E\right)}^{\, \tau}$.
\end{enumerate}
Then {\rm (a)} and {\rm (b)} are equivalent and they imply {\rm (c)}.
\end{theorem}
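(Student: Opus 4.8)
The plan is to obtain Theorem \ref{theo1} as an immediate specialization of Proposition \ref{prop1}. Since $\tau$ is a $\mathcal{C}$-projective ideal topology and $E \in \mathcal{C}$, Proposition \ref{prop1} is available for $E$ together with \emph{any} Banach space $F$; the decisive move is to apply it with the particular choice $F = \widehat{\otimes}^{n}_{s,\pi}E$, which is legitimate precisely because Proposition \ref{prop1} imposes no hypothesis on $F$ beyond being a Banach space. Thus the entire mathematical content already resides in Proposition \ref{prop1}, and what remains is a matter of translating its three conditions into the language of the weak approximation property.

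The core of the argument is the observation that, after setting $F = \widehat{\otimes}^{n}_{s,\pi}E$, the conditions of Theorem \ref{theo1} coincide with those of Proposition \ref{prop1}. Conditions (b) and (c) match verbatim: Proposition \ref{prop1}(b) reads $\mathcal{I}\circ\mathcal{P}(^nE;\widehat{\otimes}^{n}_{s,\pi}E)\subseteq \overline{\mathcal{J}\circ\mathcal{P}(^nE;\widehat{\otimes}^{n}_{s,\pi}E)}^{\,\tau}$ and Proposition \ref{prop1}(c) reads $\mathcal{I}(E;\widehat{\otimes}^{n}_{s,\pi}E)\subseteq \overline{\mathcal{J}(E;\widehat{\otimes}^{n}_{s,\pi}E)}^{\,\tau}$, which are exactly Theorem \ref{theo1}(b) and (c). The single point that calls for a word of justification is condition (a): by the very definition of the weak approximation property, the space $\widehat{\otimes}^{n}_{s,\pi}E$ has the $(\mathcal{I},\mathcal{J},\tau)$-WAP if and only if
$$\mathcal{I}\left(\widehat{\otimes}^{n}_{s,\pi}E;\widehat{\otimes}^{n}_{s,\pi}E\right)\subseteq \overline{\mathcal{J}\left(\widehat{\otimes}^{n}_{s,\pi}E;\widehat{\otimes}^{n}_{s,\pi}E\right)}^{\,\tau},$$
and the right-hand inclusion is exactly Proposition \ref{prop1}(a) evaluated at $F = \widehat{\otimes}^{n}_{s,\pi}E$. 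So Theorem \ref{theo1}(a) is nothing other than a restatement, via the WAP definition, of Proposition \ref{prop1}(a).

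Once this dictionary is established, the conclusion is immediate: Proposition \ref{prop1} asserts that its conditions (a) and (b) are equivalent and that they imply (c); transporting this through the identifications above yields that conditions (a) and (b) of Theorem \ref{theo1} are equivalent and imply (c). I do not anticipate any real obstacle, since the substantive work (the homeomorphism $L$ furnished by the $\mathcal{C}$-projective ideal topology together with \cite[Proposition 3.2]{PRIMS}, and the complemented embedding of $E$ into $\widehat{\otimes}^{n}_{s,\pi}E$) was already carried out in the proof of Proposition \ref{prop1}. The only step requiring care is the bookkeeping identification of the WAP inclusion with Proposition \ref{prop1}(a); everything else is a verbatim specialization.
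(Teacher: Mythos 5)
Your proposal is correct and matches the paper exactly: the authors derive Theorem \ref{theo1} precisely by setting $F = \widehat{\otimes}^{n}_{s,\pi}E$ in Proposition \ref{prop1}, with condition (a) identified with the WAP inclusion $\mathcal{I}\left(\widehat{\otimes}^{n}_{s,\pi}E;\widehat{\otimes}^{n}_{s,\pi}E\right)\subseteq \overline{\mathcal{J}\left(\widehat{\otimes}^{n}_{s,\pi}E;\widehat{\otimes}^{n}_{s,\pi}E\right)}^{\,\tau}$ by definition. Your bookkeeping of the three conditions is accurate and nothing further is required.
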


We need two ingredients to recover Proposition 7 and Proposition 8 of \cite{erhanpilar} as particular instances of Theorem \ref{theo1}. Remember that a vector space-valued map has {\it finite rank} if its range generates a finite dimensional subspace of the target vector space. It is easy to check that a polynomial $ P \in {\cal P}(^n E;F)$ has finite rank if and only if there are $k \in \mathbb{N}$, $P_1, \ldots, P_k \in {\cal P}(^n E)$ and $b_1, \ldots, b_k \in F$ such that $P = \sum_{j=1}^k P_j \otimes b_j$. The space of all such polynomials is denoted by ${\cal P}_{\cal F}(^n E;F)$. The first ingredient is the following elementary lemma:

\begin{lemma}\label{lemma1} ${\cal F} \circ {\cal P} = {\cal P}_{\cal F}$.
\end{lemma}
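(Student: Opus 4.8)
The plan is to establish the set equality $\mathcal{F}\circ\mathcal{P} = \mathcal{P}_{\cal F}$ by proving the two inclusions directly, on every triple $(^nE;F)$, exploiting that a finite rank operator is a finite sum of rank-one operators $\varphi\otimes b$, in exactly the notation already introduced in Example \ref{exideal}(a).

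For the inclusion $\mathcal{F}\circ\mathcal{P}\subseteq\mathcal{P}_{\cal F}$, I would start from a factorization $P = u\circ Q$ with $Q\in\mathcal{P}(^nE;G)$ and $u\in\mathcal{F}(G;F)$. Writing $u = \sum_{j=1}^k \varphi_j\otimes b_j$ for suitable $\varphi_1,\dots,\varphi_k\in G'$ and $b_1,\dots,b_k\in F$, evaluation gives $P(x) = u(Q(x)) = \sum_{j=1}^k \varphi_j(Q(x))\,b_j$. The key observation is that each $\varphi_j\circ Q$ is a scalar-valued $n$-homogeneous polynomial, that is, $\varphi_j\circ Q\in\mathcal{P}(^nE)$ (composing the symmetric $n$-linear map associated with $Q$ with the continuous linear functional $\varphi_j$ preserves symmetry, $n$-linearity and continuity). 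Hence $P = \sum_{j=1}^k (\varphi_j\circ Q)\otimes b_j\in\mathcal{P}_{\cal F}(^nE;F)$, by the description of finite rank polynomials recalled just before the statement.

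For the reverse inclusion $\mathcal{P}_{\cal F}\subseteq\mathcal{F}\circ\mathcal{P}$, I would take $P = \sum_{j=1}^k P_j\otimes b_j$ with $P_j\in\mathcal{P}(^nE)$ and $b_j\in F$ and manufacture an explicit factorization through $G = \mathbb{K}^k$. Define $Q\in\mathcal{P}(^nE;\mathbb{K}^k)$ by $Q(x) = (P_1(x),\dots,P_k(x))$, which is $n$-homogeneous because each coordinate is, and the finite rank operator $u\in\mathcal{F}(\mathbb{K}^k;F)$ given by $u(t_1,\dots,t_k) = \sum_{j=1}^k t_j b_j$, whose range lies in the finite dimensional space $\mathrm{span}\{b_1,\dots,b_k\}$. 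Then $(u\circ Q)(x) = \sum_{j=1}^k P_j(x)\,b_j = P(x)$, so $P\in\mathcal{F}\circ\mathcal{P}(^nE;F)$.

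No serious obstacle is expected: the statement is flagged as elementary and the two constructions are dual to each other. The only points meriting a word of care are that the auxiliary maps genuinely land in the right classes, namely that $\varphi_j\circ Q$ and the coordinate functions $P_j$ are bona fide $n$-homogeneous polynomials and that $u$ is finite rank. Both are immediate, the former from the stability of $n$-homogeneity and continuity under composition with a functional (or reading off a coordinate) and the latter from the finite dimension of its range; continuity of $Q$ and $u$ is automatic from that of the $P_j$ together with the finite dimensionality of $\mathbb{K}^k$.
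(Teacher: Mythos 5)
Your proof is correct, but it follows a genuinely different route from the paper's. The paper proves Lemma \ref{lemma1} via linearization on the symmetric projective tensor product: invoking \cite[Proposition 3.2]{PRIMS}, it identifies $P \in {\cal F}\circ{\cal P}(^nE;F)$ with $P_L \in {\cal F}\left(\widehat{\otimes}^n_{s,\pi}E;F\right)$, and then uses the identity $[P(E)] = P_L\left(\widehat{\otimes}^n_{s,\pi}E\right)$ to run a chain of equivalences: $P_L$ has finite rank iff $\dim [P(E)] < \infty$ iff $P \in {\cal P}_{\cal F}(^nE;F)$. You instead work directly at the level of factorizations, never passing to the tensor product: for ${\cal F}\circ{\cal P}\subseteq{\cal P}_{\cal F}$ you decompose the finite rank operator into rank-one pieces $\varphi_j \otimes b_j$ and note that each $\varphi_j\circ Q$ is a continuous scalar-valued $n$-homogeneous polynomial, so $P = \sum_{j=1}^k (\varphi_j\circ Q)\otimes b_j$ matches the description of ${\cal P}_{\cal F}$ recalled before the statement; for the converse you manufacture an explicit factorization through $\mathbb{K}^k$ with $Q = (P_1,\ldots,P_k)$ and $u(t_1,\ldots,t_k) = \sum_{j=1}^k t_j b_j$. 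Both verifications you flag (that $\varphi_j \circ Q$ and $Q$ are bona fide continuous $n$-homogeneous polynomials, and that $u$ has finite rank) are indeed immediate, so there is no gap. What each approach buys: yours is elementary and self-contained, requiring neither the composition-ideal characterization of \cite{PRIMS} nor any linearization; the paper's is essentially a one-liner given machinery it already deploys elsewhere (the linearization operator and \cite[Proposition 3.2]{PRIMS} reappear in Proposition \ref{prop1} and Theorem \ref{genBB}), so it keeps the lemma aligned with the tensor-product viewpoint that organizes the whole of Section \ref{ptp}.
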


\begin{proof} Let $P \in {\cal P}(^n E;F)$. Is is easy to check that $
[P(E)] = P_L\left(\widehat{\otimes}^{n}_{s,\pi}E\right)$. So,
\begin{align*} P \in {\cal F} \circ {\cal P}(^n E;F)& \Longleftrightarrow P_L \in {\cal F} \left(\widehat{\otimes}^{n}_{s,\pi}E ;F\right) \Longleftrightarrow \dim  P_L\left(\widehat{\otimes}^{n}_{s,\pi}E\right) < \infty \\&\Longleftrightarrow \dim [P(E)] < \infty \Longleftrightarrow P \in {\cal P}_{\cal F}(^nE;F),
\end{align*}
where the first equivalence follows from \cite[Proposition 3.2]{PRIMS}.
\end{proof}

Let ${\cal P}_{\cal K}$ denote the class of compact homogeneous polynomials between Banach spaces (bounded sets are sent to relatively compact sets). The second ingredient is a classical result due to Aron and Schottenloher \cite{as} that asserts that
\begin{equation} \label{eq1} {\cal P}_{\cal K} = {\cal K}\circ {\cal P}.
\end{equation}

Making $\tau = \tau_c$, $\cal I = {\cal K}$, ${\cal J} = {\cal F}$ and ${\cal C} = {\rm BAN}$ in Theorem \ref{theo1}, with the help of Lemma \ref{lemma1} and (\ref{eq1}) we get:

\begin{proposition}{\rm (\cite[Proposition 7]{erhanpilar})} Let $n \in \mathbb{N}$ and $E$ be a Banach space. Consider the following conditions:
\begin{enumerate}
\item[\rm (a)] $\widehat{\otimes}^{n}_{s,\pi}E$ has the $({\cal K}, {\cal F},\tau_c)$-WAP.
\item[\rm (b)] $\mathcal{P}_{\cal K}\left(^nE;
\widehat{\otimes}^{n}_{s,\pi}E\right)\subseteq \overline{ \mathcal{P}_{\cal F}\left(^nE; \widehat{\otimes}^{n}_{s,\pi}E \right)}^{\, \tau_c}$.
\item[\rm (c)] $\mathcal{K} \left(E; \widehat{\otimes}^{n}_{s,\pi}E \right)\subseteq
\overline{\mathcal {\cal F}\left(E;
\widehat{\otimes}^{n}_{s,\pi}E\right)}^{\, \tau_c}$.
\end{enumerate}
Then {\rm (a)} and {\rm (b)} are equivalent and they imply {\rm (c)}.
\end{proposition}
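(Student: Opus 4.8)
The plan is to obtain this statement as a direct specialization of Theorem \ref{theo1}, since the discussion preceding it has already pinned down the correct substitution. First I would record that the compact-open topology $\tau_c$ is a projective ideal topology by Example \ref{excapai}; hence I may take ${\cal C} = {\rm BAN}$, so that every Banach space $E$ automatically satisfies $E \in {\cal C}$. With this observation the hypotheses of Theorem \ref{theo1} are met upon choosing $\tau = \tau_c$, ${\cal I} = {\cal K}$ and ${\cal J} = {\cal F}$, and there is no restriction left on $E$, matching the present statement.

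Next I would translate the three conditions of Theorem \ref{theo1} under this choice. Condition (a) of the theorem reads verbatim as condition (a) here, namely that $\widehat{\otimes}^{n}_{s,\pi}E$ has the $({\cal K}, {\cal F}, \tau_c)$-WAP. For conditions (b) and (c) I would invoke the two ingredients isolated before the statement: the Aron--Schottenloher identity (\ref{eq1}), which gives ${\cal K} \circ {\cal P} = {\cal P}_{\cal K}$, and Lemma \ref{lemma1}, which gives ${\cal F} \circ {\cal P} = {\cal P}_{\cal F}$. Because these are equalities of the underlying sets of polynomials, and not mere isomorphisms, they may be substituted inside the $\tau_c$-closure without any additional argument.

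Carrying out the substitution, condition (b) of Theorem \ref{theo1},
$$ {\cal K} \circ {\cal P}\left(^n E; \widehat{\otimes}^{n}_{s,\pi}E\right) \subseteq \overline{{\cal F} \circ {\cal P}\left(^n E; \widehat{\otimes}^{n}_{s,\pi}E\right)}^{\,\tau_c}, $$
becomes exactly condition (b) above once ${\cal K}\circ{\cal P}$ and ${\cal F}\circ{\cal P}$ are rewritten as ${\cal P}_{\cal K}$ and ${\cal P}_{\cal F}$, while condition (c) of the theorem is literally condition (c) above with ${\cal I} = {\cal K}$ and ${\cal J} = {\cal F}$. Theorem \ref{theo1} then yields that (a) and (b) are equivalent and that both imply (c), which is precisely the assertion to be proved.

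I expect no genuine obstacle, since the entire mathematical content has already been absorbed into Theorem \ref{theo1}, Example \ref{excapai}, the identity (\ref{eq1}) and Lemma \ref{lemma1}. The only point deserving a word of care is the one noted above: that replacing the composition ideals ${\cal K} \circ {\cal P}$ and ${\cal F} \circ {\cal P}$ by the polynomial classes ${\cal P}_{\cal K}$ and ${\cal P}_{\cal F}$ is legitimate inside the $\tau_c$-closure, which holds precisely because these are set-theoretic equalities of spaces of polynomials.
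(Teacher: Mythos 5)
Your proposal is correct and follows exactly the paper's own route: the paper likewise obtains this proposition by specializing Theorem \ref{theo1} with ${\cal C} = {\rm BAN}$, $\tau = \tau_c$ (a projective ideal topology by Example \ref{excapai}), ${\cal I} = {\cal K}$ and ${\cal J} = {\cal F}$, then rewriting ${\cal K}\circ{\cal P}$ and ${\cal F}\circ{\cal P}$ as ${\cal P}_{\cal K}$ and ${\cal P}_{\cal F}$ via the Aron--Schottenloher identity (\ref{eq1}) and Lemma \ref{lemma1}. Your remark that these are set-theoretic equalities, so the substitution inside the $\tau_c$-closure needs no further justification, is precisely the point that makes the specialization immediate.
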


\begin{remark}\rm Condition (b) in \cite[Proposition 7]{erhanpilar} reads $\mathcal{P}_{\cal K}\left(^nE;
\widehat{\otimes}^{n}_{s,\pi}E\right) = \overline{ \mathcal{P}_{\cal F}\left(^nE; \widehat{\otimes}^{n}_{s,\pi}E \right)}^{\, \tau_c}$, but a glance at its proof reveals that it should read $\mathcal{P}_{\cal K}\left(^nE;
\widehat{\otimes}^{n}_{s,\pi}E\right)\subseteq \overline{ \mathcal{P}_{\cal F}\left(^nE; \widehat{\otimes}^{n}_{s,\pi}E \right)}^{\, \tau_c}$.
\end{remark}

Making $\tau = \|\cdot\|$, $\cal I = {\cal K}$, ${\cal J} = {\cal F}$ and and ${\cal C} = {\rm BAN}$ in Theorem \ref{theo1}, with the help of Lemma \ref{lemma1} and (\ref{eq1}) and remembering that ${\cal P}_{\cal K}$ and $\cal K$ are norm closed, we get:

\begin{proposition}{\rm (\cite[Proposition 8]{erhanpilar})} Let $n \in \mathbb{N}$ and $E$ be a Banach space. Consider the following conditions:
\begin{enumerate}
\item[\rm (a)] $\widehat{\otimes}^{n}_{s,\pi}E$ has the $({\cal K}, {\cal F},\|\cdot\|)$-WAP.
\item[\rm (b)] $\mathcal{P}_{\cal K}\left(^nE;
\widehat{\otimes}^{n}_{s,\pi}E\right)= \overline{ \mathcal{P}_{\cal F}\left(^nE; \widehat{\otimes}^{n}_{s,\pi}E \right)}^{\, \|\cdot\|}$.
\item[\rm (c)] $\mathcal{K} \left(E; \widehat{\otimes}^{n}_{s,\pi}E \right) =
\overline{\mathcal {\cal F}\left(E;
\widehat{\otimes}^{n}_{s,\pi}E\right)}^{\, \|\cdot\|}$.
\end{enumerate}
Then {\rm (a)} and {\rm (b)} are equivalent and they imply {\rm (c)}.
\end{proposition}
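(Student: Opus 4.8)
The plan is to obtain this proposition as the special case of Theorem \ref{theo1} in which $\tau = \|\cdot\|$, ${\cal I} = {\cal K}$, ${\cal J} = {\cal F}$ and ${\cal C} = {\rm BAN}$, and then to upgrade the two inclusions delivered by the theorem to the equalities that appear in the statement. First I would verify that this choice of parameters meets the hypotheses of Theorem \ref{theo1}: the norm topology is a (BAN-)projective ideal topology, both ${\cal K}$ and ${\cal F}$ are operator ideals, and the requirement $E \in {\cal C}$ is automatic because ${\cal C} = {\rm BAN}$. Under these substitutions, condition (a) of Theorem \ref{theo1} reads precisely that $\widehat{\otimes}^{n}_{s,\pi}E$ has the $({\cal K}, {\cal F}, \|\cdot\|)$-WAP, which is condition (a) here.

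Next I would rewrite conditions (b) and (c) of Theorem \ref{theo1} using the two identities for composition ideals. Lemma \ref{lemma1} gives ${\cal F} \circ {\cal P} = {\cal P}_{\cal F}$ and equation (\ref{eq1}) gives ${\cal K} \circ {\cal P} = {\cal P}_{\cal K}$, so condition (b) of the theorem becomes
$$ {\cal P}_{\cal K}\left(^nE; \widehat{\otimes}^{n}_{s,\pi}E\right) \subseteq \overline{{\cal P}_{\cal F}\left(^nE; \widehat{\otimes}^{n}_{s,\pi}E\right)}^{\,\|\cdot\|}, $$
while condition (c) becomes the inclusion ${\cal K}(E; \widehat{\otimes}^{n}_{s,\pi}E) \subseteq \overline{{\cal F}(E; \widehat{\otimes}^{n}_{s,\pi}E)}^{\,\|\cdot\|}$. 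Theorem \ref{theo1} then yields (a) $\Longleftrightarrow$ (b) and (a) $\Longrightarrow$ (c) at the level of these inclusions.

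Finally I would turn each inclusion into an equality. For (c) the reverse inclusion holds because ${\cal F} \subseteq {\cal K}$ and ${\cal K}$ is norm closed, whence $\overline{{\cal F}(E; \widehat{\otimes}^{n}_{s,\pi}E)}^{\,\|\cdot\|} \subseteq \overline{{\cal K}(E; \widehat{\otimes}^{n}_{s,\pi}E)}^{\,\|\cdot\|} = {\cal K}(E; \widehat{\otimes}^{n}_{s,\pi}E)$; combined with the inclusion from the theorem this gives equality. The argument for (b) is identical, using ${\cal P}_{\cal F} \subseteq {\cal P}_{\cal K}$ and the norm-closedness of ${\cal P}_{\cal K}$.

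The main (though modest) obstacle is precisely this last upgrade: one must invoke that both ${\cal K}$ and ${\cal P}_{\cal K}$ are closed in the norm topology, so that the closures of the corresponding finite-rank classes cannot escape them and the inclusions become equalities. Everything else is a direct substitution into Theorem \ref{theo1} together with the two identities ${\cal F} \circ {\cal P} = {\cal P}_{\cal F}$ and ${\cal K} \circ {\cal P} = {\cal P}_{\cal K}$, so I expect no further difficulty.
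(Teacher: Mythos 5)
Your proposal is correct and follows exactly the paper's own route: the paper obtains this proposition by specializing Theorem \ref{theo1} with $\tau = \|\cdot\|$, ${\cal I} = {\cal K}$, ${\cal J} = {\cal F}$, ${\cal C} = {\rm BAN}$, translating via Lemma \ref{lemma1} and (\ref{eq1}), and upgrading the inclusions to equalities by the norm-closedness of ${\cal K}$ and ${\cal P}_{\cal K}$, which is precisely your argument.
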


Now we extend the results above to the full projective tensor product. Replacing the projective symmetric tensor product by the projective tensor product, homogeneous polynomials by multilinear mappings and the polynomial ideal ${\cal I} \circ {\cal P}$ by the multi-ideal ${\cal I} \circ {\cal L}$, the proof of Proposition \ref{prop1}, {\it mutatis mutandis}, works. Actually  the proof of the multilinear case is easier, because it is trivial that $E_1\widehat{\otimes}_{\pi}\cdots  \widehat{\otimes}_{\pi}E_n$ contains complemented copies of $E_j$, $j = 1, \ldots, n$. So we have:

\begin{proposition}\label{prop2} Let ${\cal I}, {\cal J}$ be operator ideals, ${\cal C}\subseteq {\rm BAN}$, $\tau$ be $\cal C$-a projective ideal topology, $n \in \mathbb{N}$ and $E_1, \ldots, E_n$, $F$ be Banach spaces with $E_j \in {\cal C}$ for some $j$. Consider the following conditions:
\begin{enumerate}
\item[\rm(a)] $\mathcal{I}\left(E_1\widehat{\otimes}_{\pi}\cdots  \widehat{\otimes}_{\pi}E_n;
F\right)\subseteq \overline{\mathcal
{J}\left(E_1\widehat{\otimes}_{\pi}\cdots  \widehat{\otimes}_{\pi}E_n;
F\right)}^{\, \tau}$.
\item[\rm (b)] $\mathcal{I}\circ \mathcal{L}\left(E_1, \ldots, E_n;
F\right)\subseteq \overline{ \mathcal{J}\circ
\mathcal{L}(E_1, \ldots, E_n; F)}^{\, \tau}$.
\item[\rm (c)] $\mathcal{I} \left(E_j; F\right)\subseteq
\overline{\mathcal {J}(E_j;F)}^{\, \tau}$ for $j = 1, \ldots, n$.
\end{enumerate}
Then {\rm (a)} and {\rm (b)} are equivalent and they imply {\rm (c)}.
\end{proposition}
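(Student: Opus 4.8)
The plan is to mimic the proof of Proposition \ref{prop1} almost verbatim, replacing the symmetric projective tensor product $\widehat{\otimes}^{n}_{s,\pi}E$ by the full projective tensor product $E_1\widehat{\otimes}_{\pi}\cdots\widehat{\otimes}_{\pi}E_n$, homogeneous polynomials by $n$-linear mappings, and the composition polynomial ideal ${\cal I}\circ{\cal P}$ by the composition multi-ideal ${\cal I}\circ{\cal L}$. The central tool is again the linearization map, now
$$L \colon \left({\cal L}(E_1, \ldots, E_n;F), \tau \right) \longrightarrow \left({\cal L}\left(E_1\widehat{\otimes}_{\pi}\cdots\widehat{\otimes}_{\pi}E_n;F\right) , \tau\right),\quad L(A) = A_L.$$
Since $\tau$ is a $\cal C$-projective ideal topology and some $E_j \in {\cal C}$, condition \ref{def2}(ii) guarantees that $L$ is a linear homeomorphism. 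The multilinear analogue of \cite[Proposition 3.2]{PRIMS} gives $L\left({\cal I}\circ{\cal L}(E_1,\ldots,E_n;F)\right) = {\cal I}\left(E_1\widehat{\otimes}_{\pi}\cdots\widehat{\otimes}_{\pi}E_n;F\right)$ and the same with $\cal J$ in place of $\cal I$.

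For the equivalence of (a) and (b), I would apply $L$ (respectively $L^{-1}$) to both sides of the containments and exploit that a homeomorphism commutes with topological closure, exactly as in Proposition \ref{prop1}. To prove (a) $\Longrightarrow$ (b), I start from $L^{-1}$ applied to the identities above, use that $L^{-1}$ is a homeomorphism to push it through the $\tau$-closure, and identify the result as $\overline{{\cal J}\circ{\cal L}(E_1,\ldots,E_n;F)}^{\,\tau}$. The reverse implication (b) $\Longrightarrow$ (a) is the mirror image, applying $L$ instead of $L^{-1}$.

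For (a) $\Longrightarrow$ (c) the argument is, as the excerpt already notes, \emph{easier} than in the symmetric case. For each fixed $j$ one has canonical continuous linear maps $p_j \colon E_1\widehat{\otimes}_{\pi}\cdots\widehat{\otimes}_{\pi}E_n \longrightarrow E_j$ and $\iota_j \colon E_j \longrightarrow E_1\widehat{\otimes}_{\pi}\cdots\widehat{\otimes}_{\pi}E_n$ (obtained by tensoring with fixed norm-one elements in the remaining factors and evaluating a suitable coordinate functional) satisfying $p_j \circ \iota_j = {\rm id}_{E_j}$, so each $E_j$ is a complemented subspace; no appeal to Blasco's result \cite{Blasco} is needed. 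Given $u \in {\cal I}(E_j;F)$, the composition $u \circ p_j$ lies in ${\cal I}\left(E_1\widehat{\otimes}_{\pi}\cdots\widehat{\otimes}_{\pi}E_n;F\right)$, hence by (a) in $\overline{{\cal J}\left(E_1\widehat{\otimes}_{\pi}\cdots\widehat{\otimes}_{\pi}E_n;F\right)}^{\,\tau}$, and the ideal property of $\overline{\cal J}^{\,\tau}$ yields $u = u \circ p_j \circ \iota_j \in \overline{{\cal J}(E_j;F)}^{\,\tau}$.

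The only genuine point requiring care — the "main obstacle," though a mild one — is verifying that the multilinear version of \cite[Proposition 3.2]{PRIMS} indeed holds, i.e. that $L$ carries ${\cal I}\circ{\cal L}$ isomorphically onto ${\cal I}$ on the full projective tensor product; this is the multilinear counterpart of the polynomial fact invoked in Proposition \ref{prop1} and follows from the universal linearization property of $E_1\widehat{\otimes}_{\pi}\cdots\widehat{\otimes}_{\pi}E_n$ together with the definition of the composition multi-ideal. Everything else is a routine transcription, which is why I would simply invoke the phrase \emph{mutatis mutandis} and record the statement, exactly as the authors do.
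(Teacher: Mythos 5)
Your proposal matches the paper's argument exactly: the paper proves Proposition \ref{prop2} precisely by declaring that the proof of Proposition \ref{prop1} works \emph{mutatis mutandis} --- using the linearization homeomorphism guaranteed by condition (ii) of Definition \ref{def2} together with \cite[Proposition 3.2]{PRIMS}, which covers the multilinear case as well as the polynomial one --- and it likewise observes that (a) $\Longrightarrow$ (c) becomes easier because $E_1\widehat{\otimes}_{\pi}\cdots\widehat{\otimes}_{\pi}E_n$ trivially contains complemented copies of each $E_j$, with no appeal to Blasco's theorem. Your explicit maps $\iota_j$ (tensoring with fixed norm-one vectors) and $p_j$ (linearizing the map built from norming functionals on the remaining factors) are exactly the standard complementation the authors have in mind, so the proposal is correct and follows the paper's route.
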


Making $F = E_1\widehat{\otimes}_{\pi}\cdots  \widehat{\otimes}_{\pi}E_n$ in Proposition \ref{prop2} we get:

\begin{theorem}\label{theo2} Let ${\cal I}, {\cal J}$ be operator ideals, ${\cal C}\subseteq {\rm BAN}$, $\tau$ be a $\cal C$-projective ideal topology, $n \in \mathbb{N}$ and $E_1, \ldots, E_n$ be Banach spaces with $E_j \in {\cal C}$ for some $j$. Consider the following conditions:
\begin{enumerate}
\item[\rm (a)] $E_1\widehat{\otimes}_{\pi}\cdots  \widehat{\otimes}_{\pi}E_n$ has the $(\cal{I}, {\cal J},\tau)$-WAP.
\item[\rm (b)] $\mathcal{I}\circ \mathcal{L}\left(E_1, \ldots, E_n;
E_1\widehat{\otimes}_{\pi}\cdots  \widehat{\otimes}_{\pi}E_n\right)\subseteq \overline{{\cal J} \circ \mathcal{L}\left(E_1, \ldots, E_n; E_1\widehat{\otimes}_{\pi}\cdots  \widehat{\otimes}_{\pi}E_n \right)}^{\, \tau}$.
\item[\rm (c)] $\mathcal{I} \left(E_j; E_1\widehat{\otimes}_{\pi}\cdots  \widehat{\otimes}_{\pi}E_n \right)\subseteq
\overline{\mathcal {J}\left(E_j;
E_1\widehat{\otimes}_{\pi}\cdots  \widehat{\otimes}_{\pi}E_n\right)}^{\, \tau}$ for $j = 1, \ldots, n$.
\end{enumerate}
Then {\rm (a)} and {\rm (b)} are equivalent and they imply {\rm (c)}.
\end{theorem}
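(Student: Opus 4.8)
The plan is to obtain Theorem \ref{theo2} as an immediate specialization of Proposition \ref{prop2}, in exactly the same way Theorem \ref{theo1} was derived from Proposition \ref{prop1}. Proposition \ref{prop2} is stated for arbitrary Banach spaces $E_1, \ldots, E_n$ and $F$ (with $E_j \in {\cal C}$ for some $j$), so the only step is to substitute the particular target space $F = E_1\widehat{\otimes}_{\pi}\cdots \widehat{\otimes}_{\pi}E_n$ and check that the three resulting conditions coincide with (a), (b), (c) of the theorem.

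First I would make this substitution in condition (a) of Proposition \ref{prop2}. With $F = E_1\widehat{\otimes}_{\pi}\cdots \widehat{\otimes}_{\pi}E_n$, that condition reads
$$\mathcal{I}\left(E_1\widehat{\otimes}_{\pi}\cdots \widehat{\otimes}_{\pi}E_n; E_1\widehat{\otimes}_{\pi}\cdots \widehat{\otimes}_{\pi}E_n\right)\subseteq \overline{\mathcal{J}\left(E_1\widehat{\otimes}_{\pi}\cdots \widehat{\otimes}_{\pi}E_n; E_1\widehat{\otimes}_{\pi}\cdots \widehat{\otimes}_{\pi}E_n\right)}^{\, \tau},$$
which is precisely the statement that $E_1\widehat{\otimes}_{\pi}\cdots \widehat{\otimes}_{\pi}E_n$ has the $(\cal{I}, {\cal J},\tau)$-WAP, i.e. condition (a) of the theorem. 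Substituting the same $F$ into conditions (b) and (c) of Proposition \ref{prop2} yields verbatim conditions (b) and (c) of the theorem.

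Having matched the three conditions, I would simply invoke Proposition \ref{prop2}: it asserts that, for this choice of $F$, (a) and (b) are equivalent and both imply (c). Since each condition of the theorem is literally the corresponding condition of Proposition \ref{prop2} with $F = E_1\widehat{\otimes}_{\pi}\cdots \widehat{\otimes}_{\pi}E_n$, the same logical relations hold, which is exactly the conclusion. There is essentially no obstacle here: the content lies entirely in Proposition \ref{prop2} (whose proof is in turn the multilinear analogue of Proposition \ref{prop1}, obtained by replacing $\widehat{\otimes}^n_{s,\pi}E$ with $E_1\widehat{\otimes}_{\pi}\cdots \widehat{\otimes}_{\pi}E_n$, polynomials with multilinear mappings, and ${\cal I}\circ{\cal P}$ with ${\cal I}\circ{\cal L}$, using that each $E_j$ is complemented in the full projective tensor product). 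Thus the theorem follows at once by specializing the target space in Proposition \ref{prop2}.
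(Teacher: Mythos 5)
Your proposal is correct and matches the paper exactly: the paper derives Theorem \ref{theo2} precisely by setting $F = E_1\widehat{\otimes}_{\pi}\cdots\widehat{\otimes}_{\pi}E_n$ in Proposition \ref{prop2}, just as you do. Your verification that the specialized conditions coincide verbatim with (a), (b), (c) of the theorem is the entire content of the step, and your parenthetical remark about how Proposition \ref{prop2} itself is proved (the multilinear \emph{mutatis mutandis} of Proposition \ref{prop1}, using that each $E_j$ is complemented in the full projective tensor product) also agrees with the paper.
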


By ${\cal L}_{\cal F}(E_1, \ldots, E_n;F)$ we denote the subspace of ${\cal L}(E_1, \ldots, E_n;F)$ of all multilinear mappings of finite rank. The same proof of Lemma \ref{lemma1} gives the formula ${\cal L} \circ {\cal F} = {\cal L}_{\cal F}$. Denoting by ${\cal L}_{\cal K}$ the class of compact multilinear mappings, a classical result due to Pe{\l}czy\'nski \cite[Proposition 3]{pelc} gives the formula ${\cal L} \circ {\cal K} = {\cal L}_{\cal K}$. Thus, a multilinear analogue of \cite[Proposition 7]{erhanpilar} is obtained making ${\cal C} = {\rm BAN}$, $\tau = \tau_c$, $\cal I = {\cal K}$ and ${\cal J} = {\cal F}$ in Theorem \ref{theo2}:

\begin{proposition} Let $n \in \mathbb{N}$ and $E_1, \ldots, E_n$ be Banach spaces. Consider the following conditions:
\begin{enumerate}
\item[\rm (a)] $E_1\widehat{\otimes}_{\pi}\cdots  \widehat{\otimes}_{\pi}E_n$ has the $({\cal K}, {\cal F},\tau_c)$-WAP.
\item[\rm (b)] $\mathcal{L}_{\cal K}\left(E_1, \ldots, E_n;
E_1\widehat{\otimes}_{\pi}\cdots  \widehat{\otimes}_{\pi}E_n\right)\subseteq \overline{ \mathcal{L}_{\cal F}\left(E_1, \ldots, E_n; E_1\widehat{\otimes}_{\pi}\cdots  \widehat{\otimes}_{\pi}E_n \right)}^{\, \tau_c}$.
\item[\rm (c)] $\mathcal{K} \left(E_j; E_1\widehat{\otimes}_{\pi}\cdots  \widehat{\otimes}_{\pi}E_n \right)\subseteq
\overline{\mathcal {\cal F}\left(E_j;
E_1\widehat{\otimes}_{\pi}\cdots  \widehat{\otimes}_{\pi}E_n\right)}^{\, \tau_c}$ for $j = 1, \ldots, n$.
\end{enumerate}
Then {\rm (a)} and {\rm (b)} are equivalent and they imply {\rm (c)}.
\end{proposition}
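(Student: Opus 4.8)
The plan is to derive this proposition as a direct specialization of Theorem \ref{theo2}, exactly paralleling how Propositions 7 and 8 of \cite{erhanpilar} were recovered from Theorem \ref{theo1}. First I would fix the data ${\cal C} = {\rm BAN}$, $\tau = \tau_c$, ${\cal I} = {\cal K}$ and ${\cal J} = {\cal F}$. The hypotheses of Theorem \ref{theo2} are then met at once: the compact-open topology $\tau_c$ is a projective ideal topology by Example \ref{excapai}, and since ${\cal C} = {\rm BAN}$ the requirement that $E_j \in {\cal C}$ for some $j$ holds vacuously for any Banach spaces $E_1, \ldots, E_n$. Thus Theorem \ref{theo2} applies and furnishes the equivalence of its conditions (a), (b) together with the implication to (c); it remains only to rewrite those three conditions in the notation of the present statement.

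The next step is the translation. Condition (a) of Theorem \ref{theo2} is verbatim the $({\cal K}, {\cal F}, \tau_c)$-WAP of $E_1\widehat{\otimes}_{\pi}\cdots  \widehat{\otimes}_{\pi}E_n$, i.e. statement (a). For condition (b) I would invoke the two ideal identifications recorded just before the proposition, reading the composition multi-ideal with the operator ideal as the outer factor (as in the defining Definition and in Lemma \ref{lemma1}): the formula ${\cal F} \circ {\cal L} = {\cal L}_{\cal F}$ (the multilinear analogue of Lemma \ref{lemma1}) and Pe{\l}czy\'nski's formula ${\cal K} \circ {\cal L} = {\cal L}_{\cal K}$ \cite[Proposition 3]{pelc}. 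Substituting ${\cal I} \circ {\cal L} = {\cal K} \circ {\cal L} = {\cal L}_{\cal K}$ and ${\cal J} \circ {\cal L} = {\cal F} \circ {\cal L} = {\cal L}_{\cal F}$ turns condition (b) of Theorem \ref{theo2} into statement (b). Finally, condition (c) of Theorem \ref{theo2} with ${\cal I} = {\cal K}$ and ${\cal J} = {\cal F}$ is already statement (c). The conclusion of Theorem \ref{theo2} then transfers word for word.

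There is no genuine obstacle here, since all the substance is carried by Theorem \ref{theo2}; the work is purely one of specialization and bookkeeping. The only point that warrants care is the correct reading of the composition multi-ideal and the correct orientation of the two identifications, namely that compactness (respectively finite rank) of a multilinear map $A$ is equivalent to compactness (respectively finite rank) of its linearization $A_L$ on $E_1\widehat{\otimes}_{\pi}\cdots  \widehat{\otimes}_{\pi}E_n$. This equivalence is precisely what the analogue of Lemma \ref{lemma1} and Pe{\l}czy\'nski's result supply, so once they are cited in the proper order the proposition follows immediately.
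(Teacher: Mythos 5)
Your proposal is correct and takes essentially the same route as the paper, which obtains this proposition precisely by setting ${\cal C} = {\rm BAN}$, $\tau = \tau_c$, ${\cal I} = {\cal K}$ and ${\cal J} = {\cal F}$ in Theorem \ref{theo2} and translating condition (b) via the multilinear analogue of Lemma \ref{lemma1} and Pe{\l}czy\'nski's result \cite[Proposition 3]{pelc}. Your orientation of the composition multi-ideal, namely ${\cal F} \circ {\cal L} = {\cal L}_{\cal F}$ and ${\cal K} \circ {\cal L} = {\cal L}_{\cal K}$ with the operator ideal as the outer factor, is the one consistent with the paper's definition of ${\cal I} \circ {\cal L}$ (the paper's prose states these two formulas with the factors reversed, an apparent typo that your reading silently corrects).
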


And remembering that ${\cal L}_{\cal K}$ and $\cal K$ are norm closed, making ${\cal C} = {\rm BAN}$, $\tau = \|\cdot\|$, $\cal I = {\cal K}$ and ${\cal J} = {\cal F}$ in Theorem \ref{theo2} we obtain a multilinear analogue of \cite[Proposition 8]{erhanpilar}:

\begin{proposition} Let $n \in \mathbb{N}$ and $E_1, \ldots, E_n$ be Banach spaces. Consider the following conditions:
\begin{enumerate}
\item[\rm (a)] $E_1\widehat{\otimes}_{\pi}\cdots  \widehat{\otimes}_{\pi}E_n $ has the $({\cal K}, {\cal F},\|\cdot\|)$-WAP.
\item[\rm (b)] $\mathcal{L}_{\cal K}\left(E_1, \ldots, E_n;
E_1\widehat{\otimes}_{\pi}\cdots  \widehat{\otimes}_{\pi}E_n \right)= \overline{ \mathcal{L}_{\cal F}\left(E_1, \ldots, E_n; E_1\widehat{\otimes}_{\pi}\cdots  \widehat{\otimes}_{\pi}E_n  \right)}^{\, \|\cdot\|}$.
\item[\rm (c)] $\mathcal{K} \left(E_j; E_1\widehat{\otimes}_{\pi}\cdots  \widehat{\otimes}_{\pi}E_n  \right) =
\overline{\mathcal {\cal F}\left(E_j;
E_1\widehat{\otimes}_{\pi}\cdots  \widehat{\otimes}_{\pi}E_n \right)}^{\, \|\cdot\|}$ for $j = 1, \ldots, n$.
\end{enumerate}
Then {\rm (a)} and {\rm (b)} are equivalent and they imply {\rm (c)}.
\end{proposition}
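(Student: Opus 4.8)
The plan is to read this proposition off from Theorem \ref{theo2} by specializing to the choices ${\cal C} = {\rm BAN}$, $\tau = \|\cdot\|$, ${\cal I} = {\cal K}$ and ${\cal J} = {\cal F}$, exactly in parallel with the passage from the multilinear analogue of \cite[Proposition 7]{erhanpilar} to the present one, and then to upgrade the inclusions the theorem produces into the equalities appearing in (b) and (c) by invoking norm-closedness. First I would recall that the norm topology is a projective ideal topology (hence a ${\rm BAN}$-projective ideal topology), as already noted in the text. Since ${\cal C} = {\rm BAN}$, the requirement that $E_j \in {\cal C}$ for some $j$ is automatically met, so Theorem \ref{theo2} applies to arbitrary Banach spaces $E_1, \ldots, E_n$.

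With these choices, and abbreviating $W = E_1\widehat{\otimes}_{\pi}\cdots  \widehat{\otimes}_{\pi}E_n$, Theorem \ref{theo2} yields the equivalence of the $({\cal K}, {\cal F}, \|\cdot\|)$-WAP of $W$ with the inclusion ${\cal K}\circ{\cal L}(E_1, \ldots, E_n; W) \subseteq \overline{{\cal F}\circ{\cal L}(E_1, \ldots, E_n; W)}^{\,\|\cdot\|}$, together with the implication of both of these to ${\cal K}(E_j; W) \subseteq \overline{{\cal F}(E_j; W)}^{\,\|\cdot\|}$ for every $j$. Next I would substitute the two factorization formulas recalled just before the statement, namely ${\cal F}\circ{\cal L} = {\cal L}_{\cal F}$ (the multilinear version of Lemma \ref{lemma1}) and ${\cal K}\circ{\cal L} = {\cal L}_{\cal K}$ (Pe{\l}czy\'nski \cite[Proposition 3]{pelc}). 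After this substitution the middle condition becomes the inclusion ${\cal L}_{\cal K}(E_1, \ldots, E_n; W) \subseteq \overline{{\cal L}_{\cal F}(E_1, \ldots, E_n; W)}^{\,\|\cdot\|}$, while the first condition is already condition (a) of the proposition verbatim.

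The only remaining point, and the single step that goes beyond a mechanical substitution, is to convert these inclusions into the equalities stated in (b) and (c); for this I would use that both ${\cal L}_{\cal K}$ and ${\cal K}$ are norm closed. Since ${\cal L}_{\cal F} \subseteq {\cal L}_{\cal K}$ and ${\cal L}_{\cal K}$ is norm closed, the reverse inclusion $\overline{{\cal L}_{\cal F}(E_1, \ldots, E_n; W)}^{\,\|\cdot\|} \subseteq {\cal L}_{\cal K}(E_1, \ldots, E_n; W)$ holds automatically, so the inclusion obtained above is equivalent to the equality in (b). Likewise, ${\cal F} \subseteq {\cal K}$ with ${\cal K}$ norm closed forces $\overline{{\cal F}(E_j; W)}^{\,\|\cdot\|} \subseteq {\cal K}(E_j; W)$, so the inclusion for each $j$ upgrades to the equality in (c). Assembling these, (a) $\Longleftrightarrow$ (b) and (a)/(b) $\Longrightarrow$ (c), as claimed. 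I do not expect any genuine obstacle here: the entire mathematical content is carried by Theorem \ref{theo2}, and the only care required is to check the elementary reverse inclusions supplied by the norm-closedness of ${\cal L}_{\cal K}$ and ${\cal K}$.
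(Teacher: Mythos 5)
Your proposal is correct and follows essentially the same route as the paper: the authors obtain this proposition precisely by making ${\cal C} = {\rm BAN}$, $\tau = \|\cdot\|$, ${\cal I} = {\cal K}$ and ${\cal J} = {\cal F}$ in Theorem \ref{theo2}, substituting the identifications ${\cal F}\circ{\cal L} = {\cal L}_{\cal F}$ and ${\cal K}\circ{\cal L} = {\cal L}_{\cal K}$, and invoking the norm-closedness of ${\cal L}_{\cal K}$ and ${\cal K}$ to pass from inclusions to the stated equalities. Your explicit verification of the reverse inclusions via ${\cal L}_{\cal F} \subseteq {\cal L}_{\cal K}$ and ${\cal F} \subseteq {\cal K}$ is exactly the step the paper compresses into the phrase ``remembering that ${\cal L}_{\cal K}$ and ${\cal K}$ are norm closed.''
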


We finish the paper showing that the concept of projective ideal topology allows us to generalize the results of \cite[Section 3]{BB}. We shall need the so-called factorization method to generate a multi-ideal from a given operator ideal:

\begin{definition}\rm Let $\cal I$ be an operator ideal. We say that a multilinear mapping $A \in {\cal L}(E_1, \ldots, E_n;F)$ belongs to the multi-ideal ${\cal L}[{\cal I}]$, in symbols $A \in {\cal L}[{\cal I}](E_1, \ldots, E_n;F)$, if there are Banach spaces $G_1, \ldots, G_n$, a multilinear mapping $B \in {\cal L}(G_1, \ldots, G_n;F)$ and operators $u_j \in {\cal I}(E_j;G_j)$, $j = 1, \ldots,n$, such that $A = B \circ (u_1, \ldots, u_n)$.
\end{definition}

Further details on these multi-ideals can be found in \cite{note}.

The examples of projective ideal topologies we have been working with are topologies of uniform convergence on subsets (or products of subsets) belonging to a certain class ${\cal A}(E)$ of subsets of the Banach space $E$, $E \in {\rm BAN}$. The following condition is fulfilled by all of them:
\begin{equation}\label{extracondition} {\rm If~} A_1,A_2 \in {\cal A}(E), {\rm ~then~there~is~} A \in {\cal A}(E) {\rm ~such~that~} A_1 \cup A_2 \subseteq A.
\end{equation}

%

\noindent Indeed, it is obvious that the projective ideal topologies of Proposition \ref{tcptip} and Examples \ref{excapai} and \ref{excapai3} fulfill condition (\ref{extracondition}). And using that the closed convex hull of a (weakly) compact set is (weakly) compact we have that the projective ideal topologies of Examples \ref{excapai2} and \ref{excapai4} fulfill condition (\ref{extracondition}) too.  So imposing condition (\ref{extracondition}) we keep all our examples of projective ideal topologies.

Given operator ideals ${\cal I}_1, \ldots, {\cal I}_n$ and Banach spaces $E_1, \ldots, E_n,F$, by
$${\cal I}_1 \otimes \cdots \otimes {\cal I}_n (E_1, \ldots,E_n; F) $$
we denote that set of all $n$-linear mappings $A \in {\cal L}(E_1, \ldots,E_n; F) $ for which there are linear operators $T_j \in {\cal I}_j(E_j;E_j)$, $j =1,\ldots, n$, and an $n$-linear mapping $B \in {\cal L}(E_1, \ldots, E_n;F)$ such that $A = B \circ (T_1 \ldots , T_n)$.

 The next result generalizes \cite[Proposition 3.4]{BB}, which, in its turn, generalizes a classical result due to Heinrich \cite[Theorem 3.]{heinrich}.

\begin{theorem}\label{genBB} Let ${\cal C} \subseteq {\rm BAN}$, $\cal A$ be as in Proposition \ref{proppp} and satisfying (\ref{extracondition}), $\tau_{\cal A}$ be the corresponding $\cal C$-projective ideal topology, ${\cal I}, {\cal I}_1, \ldots, {\cal I}_n, {\cal J}, {\cal J}_1, \ldots, {\cal J}_n$ be operator ideals with ${\cal L}[{\cal I}_1, \ldots, {\cal I}_n] \subseteq \overline{{\cal I}}^{\,\tau_{\cal A}} \circ {\cal L}$ and $E_1, \ldots, E_n$ be Banach spaces one of them belonging to $\cal C$ such that
\begin{equation}\label{lequ}{\cal J}\circ {\cal L}(E_1, \ldots,E_n; E_1\widehat{\otimes}_{\pi}\cdots  \widehat{\otimes}_{\pi}E_n)  \subseteq {\cal J}_1 \otimes \cdots \otimes {\cal J}_n(E_1, \ldots,E_n; E_1\widehat{\otimes}_{\pi}\cdots  \widehat{\otimes}_{\pi}E_n)  .
\end{equation}
If $E_j$ has the $({ {\cal J}_j},{\cal I}_j,{\tau_{\cal A}})$-WAP for $j = 1, \ldots, n$, then $E_1\widehat{\otimes}_{\pi}\cdots  \widehat{\otimes}_{\pi}E_n$ has the $({\cal J},{\cal I},{\tau_{\cal A}})$-WAP.
\end{theorem}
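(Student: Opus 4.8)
The plan is to unravel the $(\mathcal{J},\mathcal{I},\tau_{\mathcal{A}})$-WAP of $X:=E_1\widehat{\otimes}_{\pi}\cdots\widehat{\otimes}_{\pi}E_n$ through the linearization homeomorphism, so that the approximation can be carried out on the multilinear side, where the factorization hypotheses live. Write $L$ for the linearization $A\mapsto A_L$, which, since one of the $E_j$ lies in $\cal C$, is by Definition \ref{def2}(ii) a $\tau_{\mathcal{A}}$-homeomorphism of $\mathcal{L}(E_1,\ldots,E_n;X)$ onto $\mathcal{L}(X;X)$. Fix $u\in\mathcal{J}(X;X)$; we must show $u\in\overline{\mathcal{I}(X;X)}^{\,\tau_{\mathcal{A}}}$. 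Using the functoriality $(v\circ B)_L=v\circ B_L$ together with the canonical multilinear map $E_1\times\cdots\times E_n\to X$, one checks the multilinear analogue of \cite[Proposition 3.2]{PRIMS}: for any operator ideal $\cal K$, $C_L\in\mathcal{K}(X;F)\Leftrightarrow C\in\mathcal{K}\circ\mathcal{L}(E_1,\ldots,E_n;F)$. Applying this with $\mathcal{K}=\mathcal{J}$, the map $A:=L^{-1}(u)$ satisfies $A\in\mathcal{J}\circ\mathcal{L}(E_1,\ldots,E_n;X)$, and then (\ref{lequ}) furnishes a factorization $A=B\circ(T_1,\ldots,T_n)$ with $T_j\in\mathcal{J}_j(E_j;E_j)$ and $B\in\mathcal{L}(E_1,\ldots,E_n;X)$.

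The heart of the argument is to show that $A$ lies in the $\tau_{\mathcal{A}}$-closure, inside $\mathcal{L}(E_1,\ldots,E_n;X)$, of the set $\mathcal{S}$ of all maps $B\circ(S_1,\ldots,S_n)$ with $S_j\in\mathcal{I}_j(E_j;E_j)$. A basic $\tau_{\mathcal{A}}$-neighbourhood of $A$ is prescribed by finitely many seminorms $\|\cdot\|_{A_1^{(i)}\times\cdots\times A_n^{(i)}}$; using (\ref{extracondition}) in each slot to absorb the finitely many sets into single sets $A_j\in\mathcal{A}(E_j)$, it suffices to approximate uniformly on one product $A_1\times\cdots\times A_n$. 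Here I would use the standard multilinear telescoping
\[
B(T_1x_1,\ldots,T_nx_n)-B(S_1x_1,\ldots,S_nx_n)=\sum_{k=1}^n B\bigl(S_1x_1,\ldots,S_{k-1}x_{k-1},(T_k-S_k)x_k,T_{k+1}x_{k+1},\ldots,T_nx_n\bigr),
\]
so that the $k$-th term is dominated on $A_1\times\cdots\times A_n$ by $\|B\|\cdot\prod_{j<k}\|S_j\|_{A_j}\cdot\|T_k-S_k\|_{A_k}\cdot\prod_{j>k}\|T_j\|_{A_j}$. The WAP hypotheses give $T_j\in\overline{\mathcal{I}_j(E_j;E_j)}^{\,\tau_{\mathcal{A}}}$, so there exist $S_j\in\mathcal{I}_j(E_j;E_j)$ with $\|T_j-S_j\|_{A_j}$ as small as desired.

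The main obstacle is the interdependence of these estimates: the bound for the $k$-th term involves the norms $\|S_j\|_{A_j}$ of the already-replaced factors, which we are simultaneously trying to choose. I would resolve this in two passes. First demand $\|T_j-S_j\|_{A_j}\le 1$ for every $j$, which forces $\|S_j\|_{A_j}\le\|T_j\|_{A_j}+1$ and bounds the whole telescoped sum by $C\sum_{k=1}^n\|T_k-S_k\|_{A_k}$, where $C:=\|B\|\prod_{j=1}^n(\|T_j\|_{A_j}+1)$ depends only on $B$, the $T_j$ and the fixed sets $A_j$ (each $\|T_j\|_{A_j}$ being finite since $A_j$ is bounded). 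Then, with $C$ fixed, a second choice of $S_j$ with $\|T_j-S_j\|_{A_j}<\varepsilon/(nC)$ makes the sum $<\varepsilon$. This places $A\in\overline{\mathcal{S}}^{\,\tau_{\mathcal{A}}}$.

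To close, note that every element of $\mathcal{S}$ belongs to $\mathcal{L}[\mathcal{I}_1,\ldots,\mathcal{I}_n](E_1,\ldots,E_n;X)\subseteq\overline{\mathcal{I}}^{\,\tau_{\mathcal{A}}}\circ\mathcal{L}(E_1,\ldots,E_n;X)$, so the linearization characterization above, applied to the operator ideal $\overline{\mathcal{I}}^{\,\tau_{\mathcal{A}}}$ (which is an operator ideal because $\tau_{\mathcal{A}}$ is an ideal topology), yields $L(\mathcal{S})\subseteq\overline{\mathcal{I}}^{\,\tau_{\mathcal{A}}}(X;X)=\overline{\mathcal{I}(X;X)}^{\,\tau_{\mathcal{A}}}$. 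Since $L$ is a $\tau_{\mathcal{A}}$-homeomorphism it preserves closures, so $u=L(A)\in L\bigl(\overline{\mathcal{S}}^{\,\tau_{\mathcal{A}}}\bigr)=\overline{L(\mathcal{S})}^{\,\tau_{\mathcal{A}}}\subseteq\overline{\mathcal{I}(X;X)}^{\,\tau_{\mathcal{A}}}$, the last containment holding because $\overline{\mathcal{I}(X;X)}^{\,\tau_{\mathcal{A}}}$ is $\tau_{\mathcal{A}}$-closed. As $u\in\mathcal{J}(X;X)$ was arbitrary, $X$ has the $(\mathcal{J},\mathcal{I},\tau_{\mathcal{A}})$-WAP.
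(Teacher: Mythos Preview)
Your proof is correct and follows essentially the same strategy as the paper's: factor via (\ref{lequ}), approximate each $T_j$ by $S_j\in\mathcal{I}_j(E_j;E_j)$ using the WAP hypotheses and a telescoping estimate, then invoke $\mathcal{L}[\mathcal{I}_1,\ldots,\mathcal{I}_n]\subseteq\overline{\mathcal{I}}^{\,\tau_{\mathcal{A}}}\circ\mathcal{L}$ to land in the right closure. The only differences are cosmetic: you carry out the approximation on the multilinear side and transport it back via the homeomorphism $L$ (the paper stays in $\mathcal{L}(X;X)$ and uses Proposition~\ref{proppp}(i) explicitly to reduce to products of sets in $\mathcal{A}(E_j)$), and you control $\|S_j\|_{A_j}$ by the a~priori bound $\|T_j\|_{A_j}+1$ where the paper makes a sequential choice of its approximants.
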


\begin{proof} Let $ T \in {\cal J}(E_1\widehat{\otimes}_{\pi}\cdots  \widehat{\otimes}_{\pi}E_n;E_1\widehat{\otimes}_{\pi}\cdots  \widehat{\otimes}_{\pi}E_n)$. By \cite[Proposition 3.2]{PRIMS} we know that the $n$-linear mapping $B \in {\cal L}(E_1, \ldots, E_n; E_1\widehat{\otimes}_{\pi}\cdots  \widehat{\otimes}_{\pi}E_n)$ such that $B_L = T$ belongs to ${\cal J} \circ {\cal L}$
. By (\ref{lequ}) there are linear operators $T_j \in {\cal J}_j(E_j;E_j)$, $j =1,\ldots, n$, and an $n$-linear mapping $D \in {\cal L}(E_1, \ldots,E_n; E_1\widehat{\otimes}_{\pi}\cdots  \widehat{\otimes}_{\pi}E_n) $ such that $B = D \circ (T_1 ,\ldots , T_n)$. It follows easily that
 $$T = B_L = D_L \circ (T_1 \otimes \cdots \otimes T_n).$$ Given $A \in {\cal A}(E_1 \hat\otimes_\pi \cdots \hat\otimes_\pi E_n)$, by condition \ref{proppp}(i) there are $k \in \mathbb{N}$ and sets $A_j^i \in {\cal A}(E_j)$, $j=1, \ldots, n$, $i = 1, \ldots, k$, such that $A \subseteq \textstyle\bigcup\limits_{i=1}^k \overline{{\rm co}}(A_1^i \otimes \cdots \otimes A_n^i). $  Let $\varepsilon > 0$. By condition (\ref{extracondition}) there are sets $A_j \in {\cal A}(E_j)$ such that  $A_j^1 \cup \cdots \cup A_j^k \subseteq A_j$, $j =1, \ldots, n$. Since sets in ${\cal A}$ are bounded there is $M > 0$ such that $\|x \| \leq M$ for every $x \in A_j$, $j = 1, \ldots, n$. As $E_1$ has the  $({\cal J}_1,{\cal I}_1,{\tau_{\cal A}})$-WAP, there is an operator $u_1 \in {\cal I}_1(E_1;E_1)$ such that $$\|u_1 -T_1\|_{A_1} < \frac{\varepsilon}{4nM^{n-1}\|D\|\cdot\|T_2\| \cdots \|T_n\|}.$$ As $E_2$ has $({\cal J}_2,{\cal I}_2,{\tau_{\cal A}})$-WAP , there is an operator $u_2 \in {\cal I}_2(E_2;E_2)$ such that
$$\|u_2 - T_2\|_{A_2} < \frac{\varepsilon}{4nM^{n-1}\|D\|\cdot\|u_1\|\cdot \|T_3\| \cdots \|T_n\|}.$$ Continuing the process we obtain operators $u_j \in {\cal I}_j(E_j;E_j)$ such that
$$\|u_j - T_j\|_{A_j} < \frac{\varepsilon}{4nM^{n-1}\|D\|\cdot\|u_1\|\cdots \|u_{j-1}\|\cdot \|T_{j+1}\| \cdots \|T_n\|}$$
for $j = 1, \ldots, n$. Performing a computation identical to the one in the proof of \cite[Proposition 3.4]{BB} we conclude that
\begin{equation}\label{equnew}\|u_1 \otimes \cdots \otimes u_n (x_1 \otimes \cdots \otimes x_n) - T_1(x_1) \otimes \cdots\otimes T_n(x_n) \| < \frac{\varepsilon}{4\|D\|}, \end{equation}
for all $x_1 \in A_1, \ldots, x_n \in A_n$. Using that $D_L$, $u_1 \otimes \cdots \otimes u_n$ and $T_1 \otimes \cdots \otimes T_n$ are all continuous linear operators, from (\ref{equnew}) it follows that
\begin{align*}\left\|D_L \circ (u_1 \otimes \cdots \otimes u_n) - T\right\|_{\overline{\rm co}\left(A_1\otimes \cdots \otimes A_n\right)}~~~~~~~~~~~~~~~~~~~~~~~~~~~~~~~~~~~~~~~~~~~~~~~~~~\\
=\left\|D_L \circ (u_1 \otimes \cdots \otimes u_n) - D_L \circ (T_1 \otimes \cdots \otimes T_n)\right\|_{\overline{\rm co}\left(A_1\otimes \cdots \otimes A_n\right)}~~\\
\leq \|D_L\| \cdot \left\|u_1 \otimes \cdots \otimes u_n - T_1 \otimes \cdots \otimes T_n\right\|_{\overline{\rm co}\left(A_1\otimes \cdots \otimes A_n\right)}~~~~~~~~~~~\\
= \|D\|\cdot \left\|u_1 \otimes \cdots \otimes u_n - T\right\|_{A_1\otimes \cdots \otimes A_n} \leq \frac{\varepsilon}{4}~.\,~~~\,~~~~~~~~~~~~~~~~~~~.
\end{align*}
So,
\begin{align*}
\left\|D_L \circ (u_1 \otimes \cdots \otimes u_n) - T \right\|_A & \leq \left\|D_L \circ (u_1 \otimes \cdots \otimes u_n) - T \right\|_{\bigcup\limits_{i=1}^k \overline{{\rm co}}(A_1^i \otimes \cdots \otimes A_n^i)}\\
& =  \max_{i = 1, \ldots, k}\left\|D_L \circ (u_1 \otimes \cdots \otimes u_n) - T \right\|_{ \overline{{\rm co}}(A_1^i \otimes \cdots \otimes A_n^i)}\\
&\leq \left\|D_L \circ (u_1 \otimes \cdots \otimes u_n) - T \right\|_{ \overline{{\rm co}}\left((A_1^1 \cup \cdots \cup A_1^k) \otimes \cdots \otimes (A_n^1 \cup \cdots \cup A_n^k)\right)}\\
& \leq \left\|D_L \circ (u_1 \otimes \cdots \otimes u_n) - T \right\|_{ \overline{{\rm co}}\left(A_1 \otimes \cdots \otimes A_n\right)}\leq \frac{\varepsilon}{4} < \frac{\varepsilon}{2}.
\end{align*}
We know that $\overline{{\cal I}}^{\,\tau_{\cal A}}$ is an operator ideal because $\tau_{\cal A}$ is an ideal topology, so the assumption ${\cal L}[{\cal I}_1, \ldots, {\cal I}_n] \subseteq \overline{{\cal I}}^{\,\tau_{\cal A}} \circ {\cal L}$ together with \cite[Proposition 3.3]{BB} yield that $u_1  \otimes \cdots \otimes u_n$ belongs to $\overline{{\cal I}}^{\,\tau_{\cal A}} (E_1 \hat\otimes_\pi \cdots \hat\otimes_\pi E_n; E_1
\hat\otimes_\pi \cdots \hat\otimes_\pi E_n)$. Calling on the ideal property of $\overline{{\cal I}}^{\,\tau_{\cal A}}$ once again we conclude that $D_L \circ (u_1  \otimes \cdots \otimes u_n)$ belongs to $\overline{{\cal I}}^{\,\tau_{\cal A}} (E_1 \hat\otimes_\pi \cdots \hat\otimes_\pi E_n; E_1
\hat\otimes_\pi \cdots \hat\otimes_\pi E_n)$ as well. So there is $U \in {\cal I} (E_1 \hat\otimes_\pi \cdots \hat\otimes_\pi E_n; E_1
\hat\otimes_\pi \cdots \hat\otimes_\pi E_n)$ such that $$\|U - D_L \circ (u_1 \otimes \cdots \otimes u_n)\|_A < \frac{\varepsilon}{2}.$$ It follows that $\|U - T\|_A < \varepsilon$, which proves that $T \in \overline{{\cal I}(E_1 \hat\otimes_\pi \cdots \hat\otimes_\pi E_n; E_1
\hat\otimes_\pi \cdots \hat\otimes_\pi E_n)}^{\, \tau_{\cal A}}$ and completes the proof.
\end{proof}

When ${\cal I}_1 = \cdots = {\cal I}_n = {\cal I}$ we write ${\cal L}[{\cal I}]: =\bigcup\limits_{n = 1}^\infty{\cal L}[{\cal I}_1, \ldots, {\cal I}_n]$.

\begin{corollary}\label{genBBcor} Let ${\cal C} \subseteq {\rm BAN}$, $\cal A$ be as in Proposition \ref{proppp} and satisfying (\ref{extracondition}), $\tau_{\cal A}$ be the corresponding $\cal C$-projective ideal topology and $\cal I$, $\cal J$ be operator ideals such that $\mathcal{L}[\mathcal{I}] \subseteq \overline{{\cal I}}^{\,\tau_{\cal A}}\circ\mathcal{L}$.
The following are equivalent for a Banach space $E \in {\cal C}$ such that
${\cal J}\circ {\cal L}\left(^nE; \widehat\otimes_{\pi}^n E \right) \subseteq \otimes^n{\cal J}\left(^nE;\widehat\otimes_{\pi}^n E \right)$ for every $n$ (for some $n$, respectively):

\medskip

\noindent {\rm (a)} $E$ has the $({\cal J}, {\cal I}, \tau_{\cal A})$-WAP.\\
\medskip
{\rm (b)} $\widehat\otimes_\pi^n E$ has the $({\cal J}, {\cal I}, \tau_{\cal A})$-WAP for every $n$ ($\widehat\otimes_\pi^k E$ has the $({\cal J}, {\cal I}, \tau_{\cal A})$-WAP for every $k \leq n$, respectively).\\
\medskip
{\rm (c)} $\widehat\otimes_\pi^n E$ has the $({\cal J}, {\cal I}, \tau_{\cal A})$-WAP for some $n$ ($\widehat\otimes_\pi^k E$ has the $({\cal J}, {\cal I}, \tau_{\cal A})$-WAP for some $k \leq n$, respectively).\\
\medskip
{\rm (d)} $\widehat\otimes_\pi^{n,s}E$ has the $({\cal J}, {\cal I}, \tau_{\cal A})$-WAP for every $n$ ($\widehat\otimes_\pi^{k,s}E$ has the $({\cal J}, {\cal I}, \tau_{\cal A})$-WAP for every $k \leq n$, respectively).\\
\medskip
{\rm (e)} $\widehat\otimes_\pi^{n,s}E$ has the $({\cal J}, {\cal I}, \tau_{\cal A})$-WAP for some $n$ ($\widehat\otimes_\pi^{k,s}E$ has the $({\cal J}, {\cal I}, \tau_{\cal A})$-WAP for some $k \leq n$, respectively).
\end{corollary}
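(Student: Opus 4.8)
The plan is to prove Corollary \ref{genBBcor} by assembling the implications from Theorem \ref{genBB} together with the complementation results already established, using the symmetry of the hypotheses across all the spaces $E$, $\widehat\otimes_\pi^n E$ and $\widehat\otimes_\pi^{n,s}E$. First I would observe that all these spaces stand on equal footing with respect to the structural hypotheses: each contains a complemented copy of $E$ (by \cite[Corollary 4]{Blasco} for the symmetric case and trivially for the full tensor power), and conversely $E$ embeds complementedly in each of them. So the strategy is to show that the $({\cal J}, {\cal I}, \tau_{\cal A})$-WAP of any one of these spaces transfers both up (from $E$ to its tensor powers, via Theorem \ref{genBB}) and down (from a tensor power back to $E$, via complementation).

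The heart of the argument is the implication (a) $\Rightarrow$ (b): assuming $E$ has the $({\cal J}, {\cal I}, \tau_{\cal A})$-WAP, I would apply Theorem \ref{genBB} with $E_1 = \cdots = E_n = E$ and ${\cal I}_1 = \cdots = {\cal I}_n = {\cal I}$, ${\cal J}_1 = \cdots = {\cal J}_n = {\cal J}$. The hypothesis ${\cal L}[{\cal I}] \subseteq \overline{{\cal I}}^{\,\tau_{\cal A}} \circ {\cal L}$ supplies the multi-ideal inclusion required by Theorem \ref{genBB}, and the factorization hypothesis ${\cal J}\circ {\cal L}(^nE; \widehat\otimes_\pi^n E) \subseteq \otimes^n{\cal J}(^nE; \widehat\otimes_\pi^n E)$ is exactly condition (\ref{lequ}) in the equi-ideal case. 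Theorem \ref{genBB} then yields that $\widehat\otimes_\pi^n E$ has the $({\cal J}, {\cal I}, \tau_{\cal A})$-WAP for every $n$ (or every $k \le n$ in the ``for some $n$'' version, since the factorization hypothesis is then available for each such $k$). The implications (b) $\Rightarrow$ (c) and (d) $\Rightarrow$ (e) are trivial (``for every'' implies ``for some''), and the reverse directions (c) $\Rightarrow$ (a) and (e) $\Rightarrow$ (a) follow from Proposition \ref{complem}: if some tensor power has the WAP and $E$ is isomorphic to a complemented subspace of it, then $E$ inherits the WAP. For the symmetric branch (a) $\Rightarrow$ (d), I would likewise invoke the symmetric form of Theorem \ref{genBB} already established in Proposition \ref{prop1}/Theorem \ref{theo1}, or note that the full tensor power $\widehat\otimes_\pi^n E$ contains $\widehat\otimes_\pi^{n,s}E$ complementedly so that the WAP passes between them by Proposition \ref{complem}.

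Thus the logical skeleton is a cycle: (a) $\Rightarrow$ (b) $\Rightarrow$ (c) $\Rightarrow$ (a) handling the full tensor powers, and (a) $\Rightarrow$ (d) $\Rightarrow$ (e) $\Rightarrow$ (a) handling the symmetric ones, with the two cycles joined at (a). The downward implications all rest on Proposition \ref{complem} plus the complementation facts for (symmetric) tensor powers; the upward implications all rest on the single Theorem \ref{genBB}.

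I expect the main obstacle to be bookkeeping the distinction between the ``for every $n$'' and ``for some $n$'' (respectively, ``for every/some $k \le n$'') versions in parallel, and in particular checking that the factorization hypothesis (\ref{lequ}) is genuinely available for each intermediate tensor power $\widehat\otimes_\pi^k E$ that the argument passes through; one must be careful that the standing hypothesis is stated ``for every $n$'' (resp. ``for some $n$'') in a way that feeds correctly into each application of Theorem \ref{genBB}. A secondary subtlety is confirming that the symmetric tensor power $\widehat\otimes_\pi^{n,s}E$ sits complementedly inside $\widehat\otimes_\pi^n E$ (via the symmetrization projection), so that the symmetric and non-symmetric chains are truly interchangeable under Proposition \ref{complem}; granting this, no new estimate is needed beyond those already contained in Theorem \ref{genBB} and Proposition \ref{complem}.
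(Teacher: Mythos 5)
Your argument is essentially the paper's own: the published proof is literally ``repeat the proof of \cite[Corollary 3.8]{BB} using Theorem \ref{genBB} and Proposition \ref{complem}'', i.e., the same cycle you describe --- Theorem \ref{genBB} applied with $E_1=\cdots=E_n=E$, ${\cal I}_1=\cdots={\cal I}_n={\cal I}$, ${\cal J}_1=\cdots={\cal J}_n={\cal J}$ (so that (\ref{lequ}) becomes exactly the standing hypothesis ${\cal J}\circ{\cal L}(^nE;\widehat\otimes_\pi^nE)\subseteq\otimes^n{\cal J}(^nE;\widehat\otimes_\pi^nE)$) for the upward step, and Proposition \ref{complem} combined with the complementations $E$ in $\widehat\otimes_\pi^{n,s}E$ (\cite[Corollary 4]{Blasco}), $E$ in $\widehat\otimes_\pi^n E$, and $\widehat\otimes_\pi^{n,s}E$ in $\widehat\otimes_\pi^n E$ (symmetrization projection) for the downward steps.

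One correction, however, on the point you yourself flagged as the main obstacle: in the ``for some $n$'' variant you claim that the factorization hypothesis ``is then available for each such $k\le n$''. It is not --- the inclusion ${\cal J}\circ{\cal L}(^kE;\widehat\otimes_\pi^kE)\subseteq\otimes^k{\cal J}(^kE;\widehat\otimes_\pi^kE)$ for $k<n$ does not follow from the inclusion at level $n$, so you cannot invoke Theorem \ref{genBB} separately at each level $k\le n$. The correct route, implicit in the proof of \cite[Corollary 3.8]{BB} that the paper repeats, is to apply Theorem \ref{genBB} only once, at the level $n$ where the hypothesis holds, and then \emph{descend} by complementation: $\widehat\otimes_\pi^k E$ is complemented in $\widehat\otimes_\pi^n E$ for every $k\le n$ (fix $x_0\in E$ and $\varphi\in E'$ with $\varphi(x_0)=1$ and use $z\mapsto z\otimes x_0\otimes\cdots\otimes x_0$ together with ${\rm id}\otimes\varphi\otimes\cdots\otimes\varphi$), and likewise $\widehat\otimes_\pi^{k,s}E$ is complemented in $\widehat\otimes_\pi^{n,s}E$ by Blasco's results, so Proposition \ref{complem} yields the WAP for all lower powers from the single level $n$. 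A second small slip: your first suggestion for (a)$\Rightarrow$(d), a ``symmetric form of Theorem \ref{genBB} established in Proposition \ref{prop1}/Theorem \ref{theo1}'', does not exist in the paper (Theorem \ref{theo1} relates the WAP of $\widehat\otimes^{n}_{s,\pi}E$ to polynomial approximation statements, which is a different matter); your alternative route, passing through (b) and the symmetrization projection via Proposition \ref{complem}, is the one that works and is what the paper's proof uses.
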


\begin{proof} Just repeat the proof of \cite[Corollary 3.8]{BB} using Theorem \ref{genBB} and Proposition \ref{complem}.
\end{proof}

Since ${\rm id}_{E_1\widehat{\otimes}_{\pi}\cdots  \widehat{\otimes}_{\pi}E_n} = {\rm id}_{E_1}\otimes \cdots \otimes {\rm id}_{E_n}$, it is clear that condition (\ref{lequ}) holds for ${\cal J} = {\cal J}_1 = \cdots = {\cal J}_n= {\cal L}$ and every $n$. Thus, making ${\cal C} = {\rm BAN}$, $ {\cal J} = {\cal J}_1 = \cdots = {\cal J}_n= {\cal L}$ and letting ${\cal A}(E)$ be the collection of compact subsets of the Banach space $E$, that is, $\tau_{\cal A} = \tau_c$, Theorem \ref{genBB} recovers \cite[Proposition 3.4]{BB} and Corollary \ref{genBBcor} recovers \cite[Corollary 3.8]{BB} (remember that $({\cal L}, {\cal I}, \tau)$-AP = $({\cal L}, {\cal I}, \tau$)-WAP).

A number of examples of ideals satisfying $\mathcal{L}[\mathcal{I}_1, \ldots, {\cal I}_n] \subseteq {\cal J}\circ\mathcal{L}$ and/or $\mathcal{L}[\mathcal{I}] \subseteq {\cal J}\circ\mathcal{L}$ can be found in \cite[3.5-3.7]{BB}.

\vspace{2em}

%

\noindent Faculdade de Matem\'atica\\
Universidade Federal de
Uberl\^andia\\
38.400-902 -- Uberl\^andia, Brazil\\
e-mails: soniles@famat.ufu.br, botelho@ufu.br.

\end{document}